\DeclareMathOperator*{\argmax}{arg\, max}
\DeclareMathOperator*{\argmin}{arg\, min}
\newtheorem{thm}{Theorem}
\newtheorem{lem}{Lemma}
\newtheorem{cor}{Corollary}
\newtheorem{remark}{Remark}
\newtheorem{definition}{Definition}
\newtheorem{hyp}{Assumption}
\newenvironment{skproof}{%
  \proof}{\endproof}
\author{%
  Solenne Gaucher \\
  Laboratoire de Mathématiques d'Orsay\\
  Université Paris-Saclay,\\
  91405, Orsay, France \\
  \texttt{solenne.gaucher@math.u-psud.fr}
}
\title{Finite Continuum-Armed Bandits}
\begin{document}

\maketitle

\begin{abstract}
We consider a situation where an agent has $T$ ressources to be allocated to a larger number $N$ of actions. Each action can be completed at most once and results in a stochastic reward with unknown mean. The goal of the agent is to maximize her cumulative reward. Non trivial strategies are possible when side information on the actions is available, for example in the form of covariates. Focusing on a nonparametric setting, where the mean reward is an unknown function of a one-dimensional covariate, we propose an optimal strategy for this problem. Under natural assumptions on the reward function, we prove that the optimal regret scales as $O(T^{1/3})$ up to poly-logarithmic factors when the budget $T$ is proportional to the number of actions $N$. When $T$ becomes small compared to $N$, a smooth transition occurs.  When the ratio $T/N$ decreases from a constant to $N^{-1/3}$, the regret increases progressively up to the $O(T^{1/2})$ rate encountered in continuum-armed bandits.

\end{abstract}

\section{Introduction}
\subsection{Motivations}

Stochastic multi-armed bandits have been extensively used to model online decision problems under uncertainty : at each time step, an agent must choose an action from a finite set, and receives a reward drawn i.i.d. from a distribution depending on the action she has selected. By choosing the same action over and over again, she can learn the distribution of the rewards for performing this action. The agent then faces a trade-off between collecting information on the mechanism generating the rewards, and taking the best action with regards to the information collected, so as to maximise her immediate reward.

In some real-life situations, the agent can complete each action at most once, and does not have enough resource to complete all of them. Her decisions can be rephrased in terms of allocating limited resources between many candidates. The agent cannot estimate the reward of an action by performing it several times, and must rely on additional information to construct her strategy. In many situations, covariates providing information on the actions are available to the agent. Then, the expected reward for taking an action can be modelled as a (regular) function of the corresponding covariate. Thus, similar actions give rise to similar rewards. This problem is motivated by the following examples.
\vspace{-0.1 cm}
\begin{itemize}[leftmargin=*]
\item \textbf{Allocation of scarce resources.}
The response of an individual to medical treatment can be inferred from contextual information describing this patient. When this treatment is expensive or short in supply, decision-makers aim at efficiently selecting recipients who will be treated, so as to maximise the number of beneficial interventions \cite{Osteoporose}. During epidemic crises, lack of medical resources may force hospital staff to progressively identify patients that are more likely to recover based on indicators of their general health status, and prioritize them in the resource allocation. Similar questions arise when determining college admission so as to optimize the number of successful students \cite{Fairness}, or allocating financial aid to individuals most likely to benefit from it.

 \item \textbf{Contextual advertisement with budget.} A common form of payment used in online advertisement is pay-per-impression: the advertiser pays a fixed fee each time an ad is displayed \cite{BanditBudgetCombes}, and the budget of an advertising campaign determines the number of users who view the advertisement. It has been shown in \cite{Exposure} that click-through rates decrease steeply as users are exposed over and over to the same recommendation. Advertisers may therefore prefer to display their campaign to a new potential customer rather than to an already jaded one, so that each user will view the campaign at most once. Those users are often described by features including demographic information as well as previous online activities. Advertisers want to leverage this contextual information so as to focus on users that are more likely to click on the ad banner.

\item \textbf{Pair matching.}  Finding good matches between pairs of individuals is an ubiquitous problem. Each pair of individuals represents an action : the agent sequentially selects $T$ pairs, and receives a reward each time the pair selected corresponds to a good matching \cite{giraud2019pair}. In many settings, the agent has access to information describing either individuals or pairs of individuals. For example, online gaming sites may want to pair up players of similar level or complementary strength; dating applications may use information provided by the users to help them find a partner. Similarly, biologists studying  protein-protein interaction networks will sequentially test pairs of proteins to discover possible interactions. Such experiments are however costly, difficult and time-consuming, and leveraging information describing those proteins can help researchers focus on pairs more likely to interact \cite{PPI}. 
\end{itemize}

In these settings, the decision maker can complete each action (i.e., select each internet user, patient, college candidate or pair of individuals) at most once; however by selecting an action, she learns about the expected rewards of similar actions. We model the dependence of the expected reward on the variable describing this action in a non-parametric fashion, and rephrase our problem by using terminology from the bandit literature.

\textbf{The Finite Continuum-Armed Bandit (F-CAB) problem : }
An agent is presented with a set of $N$ arms described by covariates $\{a_1, a_2, ..., a_N\}$ in a continuous space $\mathcal{X}$ (the arm $i$ will henceforth be identified with its covariate $a_i$). The agent is given a budget $T$ to spend on those arms, where $T$ is typically a fraction $p$ of the number of available arms $N$. At each step  $t \leq T$, the agent pulls an arm $\phi(t)$ among the arms that have not been pulled yet, and receives the corresponding reward $y_{\phi(t)} \in [0,1]$. Conditionally on $\{a_1, a_2, ..., a_N\}$, the rewards $y_{i}$ are sampled independently from some distribution with mean  $m(a_{i})$, where $m : \mathcal{X} \rightarrow [0,1]$ is the (unknown) mean reward function. The aim of the agent is to maximise the sum of the rewards she receives.

The F-CAB problem is closely related to the classical continuum-armed bandit problem. This problem, first introduced in \cite{KleinbergCont}, extends multi-armed bandits to continuous sets of actions.  At each step, the agent takes an action indexed by a point of her choosing in a continuous space $\mathcal{X}$. In order to maximise her gains, she must explore the space $\mathcal{X}$ so as to find and exploit one of the maximas of the mean reward function. The assumption that the agent can choose arbitrarily any action corresponding to any covariate, unrealistic in many real-life situations, is relaxed in the F-CAB model. Moreover in the F-CAB setting, the agent can pull each arm at most once. Thus she must endeavour to find and exploit a large set of good arms, as she cannot focus on a single arm corresponding to a maxima.

\subsection{Related work}

To the best of the author's knowledge, continuum-armed bandits without replacement have not been considered before. On the other hand, variants to the multi-armed bandit problem were proposed to relax the assumption that the agent can choose any action an infinite number of time.

In \cite{MortalMAB}, the authors consider a multi-armed bandit problem with infinitely many arms, whose rewards are drawn i.i.d. from some known distribution. Each arm can only be pulled a finite number of times before it dies. Algorithms developed for this problem heavily rely on the knowledge of the distribution of the arms, and on the fact that an infinite number of good arms is always available to the player, both assumptions that are violated in our setting.

Closer to our problem is \cite{scratch_game} : the authors study the problem of scratch game, where each arm can be pulled a limited number of time before dying. They bound the weak regret, defined as the difference between $T \times m_{(\phi^*(T))}$ and the cumulative reward of the player, where $m_{(\phi^*(T))}$ is the expected reward of the $T$-th armed pulled by an oracle strategy. Since the reward of the arm pulled by this oracle strategy decreases at each step, its cumulative reward can be much larger than  $T \times  m_{(\phi^*(T))}$ (both can differ by a linear factor). Thus, the weak regret can be significantly lower than the classical regret, which we control in this paper.

Another related problem is that of budgeted bandits with different budgets for each arm : the decision maker faces a multi-armed bandit problem with constraints on the number of pull of each arm. This problem is studied in \cite{Exposure}: the authors assume that the number of arms is fixed, and that the budget of each arm increases proportionally to the number of steps $T$. They provide numerical simulations as well as asymptotic theoretical bounds on the regret of their algorithm. More precisely, they show that in the limit $T \rightarrow \infty$, all optimal arms but one have died before time $T$ : thus, when the budget of each arm and the total number of pulls $T$ are sufficiently large, the problem reduces to a classical multi-armed bandit. By contrast, in the F-CAB setting we can pull each arm at most once and do not attain this regime. Our technics of proof require therefore more involved, non-asymptotic regret bounds.

\subsection{Contribution and outline}
In this paper, we present a new model for finite continuum-armed bandit motivated by real-world applications. In this resource allocation problem, each action is described by a continuous covariate, and can be taken at most once. After some preliminary discussions, we restrict our attention to one-dimensionnal covariates and introduce further assumptions on the distribution of the covariates $a_i$ and on the mean payoff function $m$ in Section \ref{sec:setup_prel}. In Section \ref{sec:ucbf}, we present an algorithm for this problem, and establish a non-asymptotic upper-bound on the regret of this algorithm. More precisely, we prove that when the budget $T$ is a fixed proportion of the number of arms, with high probability, $R_T = O(T^{1/3}\log(T)^{4/3})$. This rate is faster than all regret rates achievable in the classical continuum armed bandit under similar assumptions on the mean reward function. Indeed, the authors of  \cite{Auer_continuum} show that regret for the classical continuum-armed bandits problem is typically of order $O(T^{1/2}\log(T))$. On the other hand, we show that when the budget $T$ becomes small compared to the number of arms $N$, the regret rate smoothly increases. In the limit where the ratio $T/N$ decreases to $N^{-1/3}\log(N)^{2/3}$, the regret increases progressively up to the $O(T^{1/2}\log(T))$ rate encountered in classical continuum-armed bandit problems. Moreover, we derive matching lower bounds on the regret, showing that our rate is sharp up to a poly-logarithmic factor. Extensions of our methods to multi-dimensional covariates are discussed in Section \ref{sec:discussion} and detailed in the Appendix. We provide high level ideas behind those results throughout the paper but defer all proofs to the Appendix.

\section{Problem set-up}
\label{sec:setup_prel}
\subsection{Preliminary discussion}

In the F-CAB problem, each arm can be pulled at most once, and exploration is made possible by the existence of covariates describing the arms. This framework is related to the classical Continuum-Armed Bandit problem, which we recall here.

\textbf{The Continuum-Armed Bandit (CAB) problem: }
At each step $t$, an agent selects any covariate $a_{t} \in \mathcal{X}$, pulls an arm indexed by this covariate and receives the corresponding reward $y_{t} \in [0,1]$. Here again, the rewards for pulling an arm $a\in [0,1]$ are drawn i.i.d. conditionally on $a$ from some distribution with mean $m(a)$. The agent aims at maximising her cumulative reward.

By contrast to the CAB setting, where the agent is free to choose any covariate in $\mathcal{X}$, in the F-CAB setting she must restrict her choice to the ever diminishing set of available arms. The usual trade-off between exploration and exploitation breaks down, as  the agent can pull but a finite number of arms in any region considered as optimal. Once those arms have been pulled, all effort spent on identifying this optimal region may become useless. On the contrary, in the CAB setting the agent may pull arms in a region identified as optimal indefinitely. For this reason, strategies lead to lower cumulative reward in the F-CAB setting than that in the less constrained CAB setting.

Nonetheless, this does not imply that F-CAB problems are more difficult than CAB ones in terms of regret. The difficulty of a problem is often defined, in a minimax sense, as the performance of the best algorithm on a worst problem instance. In bandit problems, the performance of a strategy $\phi$ is often characterised as the difference between its expected cumulative reward, and that of an agent knowing in hindsight the expected rewards of the different arms. At each step $t = 1, ..., T$, this oracle agent pulls greedily the arm $\phi^*(t)$, where $\phi^*$ denote a permutation of $\{1, ..., N\}$ such that $m(a_{\phi^*(1)}) \geq m(a_{\phi^*(2)}) \geq ... \geq m(a_{\phi^*(N)})$. Note that this agent receives an expected cumulative reward of $\sum_{t\leq T} m(a_{\phi^*(t)})$, which is lower than $T \times \max_a m(a)$. Thus the regret, defined as the difference between the cumulative reward of $\phi
^*$ and that of our strategy, is given by

\begin{equation*}
     R_T = \underset{1 \leq t \leq T}{\sum} m(a_{\phi^*(t)}) - \underset{1 \leq t \leq T}{\sum} m(a_{\phi(t)}).
\end{equation*}

The difficulty of the F-CAB problem is governed by the ratio $p = T/N$. In the limit $p \rightarrow 1$, the problem becomes trivial as any strategy must pull all arms, and all cumulative rewards are equal. Opposite to this case, in the limit $p \to 0$,  choosing $a_{\phi(t)}$ from the large set of remaining arms becomes less and less restrictive, and we expect the problem to become more and more similar to a CAB. To highlight this phenomenon, we derive upper and lower bounds on the regret that explicitly depend on $p$. We show that when $p \in (0,1)$ is a fixed constant, i.e. when the budget is proportional to the number of arms, lower regret rates can be achieved for the F-CAB problem than for the CAB problem. To the best of the author's knowledge, it is the first time that this somewhat counter-intuitive phenomenon is observed; however it is consistent with previous observations on rotting bandits \cite{Rotting_bandits}, in which the expected reward for pulling an arm decreases every time this arm is selected. Like in the F-CAB model, in rotting bandits the oracle agent receives ever decreasing rewards. The authors of \cite{Rotting_bandits_no_harder} show that this problem is no harder than the classical multi-armed bandit : although the cumulative rewards are lower than those in the classical multi-armed bandit setting, it does not imply that strategies should suffer greater regrets. This phenomenon is all the more striking in the F-CAB setting, as we show that strategies can in fact achieve lower regrets. Finally, we verify that when $p \rightarrow 0$, the regret rate increases. In the limit where $p = N^{-1/3}\log(N)^{2/3}$, the problem becomes similar to a CAB and the regret rate increases up to the rate encountered in this setting.

\subsection{Assumptions on the covariates and the rewards}

 While in general the covariates $a_i$ could be multivariate, we restrict our attention to the one-dimensional case, and assume that $\mathcal{X} = [0,1]$. The multivariate case is discussed and analysed in Section \ref{sec:discussion} and in the Appendix. Focusing on the one-dimensional case allows us to highlight the main novelties of this problem by avoiding cumbersome details. We make the following assumption on the distribution of the arms. 

\begin{hyp}\label{hyp:uniforme}
For $i = 1, ..., N$, $a_i \overset{i.i.d.}{\sim} \mathcal{U}([0,1])$.
\end{hyp}

By contrast to the CAB setting, where one aims at finding and pulling arms with rewards close to the maxima of $m$, in a F-CAB setting the agent aims at finding and pulling the $T$ best arms : the difficulty of the problem thus depends on the behaviour of $m$ around the reward of the $T$-th best arm $m(a_{\phi^*(T)})$. Under Assumption \ref{hyp:uniforme}, we note that $\mathbb{E}[m(a_{\phi^*(T)})] = M$, where $M$ is defined as 
\[M = \min\left\{A : \lambda\left(\{x : m(x)\geq A\}\right) < p \right\}\]
and $\lambda$ is the Lebesgue measure. In words, we aim at identifying and exploiting arms with expected rewards above the threshold $M$. We therefore say that an arm $a_i$ is optimal if $m(a_i) \geq M$, and that it is otherwise sub-optimal. Moreover, we say that an arm $a_i$ is sub-optimal (respectively optimal) by a gap $\Delta$ if $0 \leq M - m(a_i) \leq \Delta$ (respectively $0 \leq m(a_i) - M \leq \Delta$).

We make the following assumptions on the mean reward function. First, note that if $m$ varies sharply, the problem becomes much more difficult as we cannot infer the value of $m$ at a point based on rewards obtained from neighbouring arms. In fact, if $m$ presents sharp peaks located at the $T$ optimal arms, any reasonable strategy must suffer a linear regret. In order to control the fluctuations of $m$, we assume that it is weakly Lipschitz continuous around the threshold $M$.

\begin{hyp}[Weak Lipschitz condition]\label{hyp:lip}
There exists $L >0$ such that, for all $(x,y) \in [0,1]^2$,
\begin{equation}
\vert m(x) - m(y) \vert \leq \max\{\vert M - m(x)\vert, L\left\vert x - y\right \vert\}.\label{eq:Lipshitz}
\end{equation}
\end{hyp}

Assumption \ref{hyp:lip} is closely related to Assumption A2 in \cite{x-armed}. It requires that the mean reward function $m$ is $L$-Lipschitz at any point $x'$ such that $m(x') = M$ : indeed, in this case the condition states that for any $y$, $\vert m(x) - m(y) \vert \leq L\left\vert x - y\right \vert$. On the other hand, $m$ may fluctuate more strongly around any point $x$ whose expected reward is far from the threshold $M$.

Bandit problems become more difficult when many arms are slightly sub-optimal. Similarly, the F-CAB problem becomes more difficult if there are many arms with rewards slightly above or under the threshold $M$, since it is hard to classify those arms respectively as optimal and sub-optimal. This difficulty is captured by the measure of points with expected rewards close to $M$. 

\begin{hyp}[Margin condition]\label{hyp:beta}
There exists $Q>0$ such that for all $\epsilon \in (0,1)$,
\begin{equation}
\lambda\left(\left\{ x :  \left\vert M - m(x)\right\vert \leq  \epsilon \right\}\right) \leq Q \epsilon.\label{eq:margin}
\end{equation}
\end{hyp}

In the classical CAB setting, lower bounds on the regret are of the order $O(T^{1/2})$ under similar margin assumptions, and they become $O(T^{2/3})$ when these margin assumptions are not satisfied. In the F-CAB, Assumption \ref{hyp:beta} allow us to improve regret bounds up to $O(T^{1/3}p^{-1/3})$.  It is altogether not too restrictive, as it is verified if $m$ has finitely many points $x$ such that $m(x) = M$, and has non vanishing first derivatives at those points. Note that if the margin assumption and the weak Lipschitz assumption hold simultaneously for some $L,Q>0$, we must have $QL \geq 1$.

\section{UCBF : Upper Confidence Bound algorithm for Finite continuum-armed bandits}

\label{sec:ucbf}

\subsection{Algorithm}
\label{subsec:ucbf}

We now describe our strategy, the Upper Confidence Bound for Finite continuum-armed bandits (UCBF). It is inspired from the algorithm UCBC introduced in \cite{Auer_continuum} for CAB. 

\begin{algorithm}
\caption{Upper Confidence Bound for Finite continuum-armed bandits (UCBF)}
\begin{algorithmic}
\STATE \textbf{Parameters:} $K, \delta$
\STATE \textbf{Initialisation:} Divide $[0,1]$ into $K$ intervals $I_k$ with $I_k = [\frac{k-1}{K}, \frac{k}{K})$for $k \in \{1, ..., K-1\}$ and $I_K = [\frac{K-1}{K},1]$. Let $N_k = \sum_{1 \leq i \leq N} \mathds{1}\{a_i \in I_k\}$ be the number of arms in the interval $I_k$. Define the set of intervals alive as the set of intervals $I_k$ such that $N_k \geq 2$. Pull an arm uniformly at random in each interval alive.
\FOR{$t = K+1, ..., T$} 
    \STATE $-$ Select an interval $I_k$ that maximizes $\widehat{m}_{k}(n_k(t-1)) + \sqrt{\frac{\log(T/\delta)}{2n_k(t-1)}}$ among the set of alive intervals, where $n_k(t-1)$ is the number of arms pulled from $I_k$ by the algorithm before time $t$, and $\widehat{m}_{k}(n_k(t-1))$ is the average reward obtained from those $n_k(t-1)$ samples.
    \STATE $-$ Pull an arm selected uniformly at random among the arms in $I_k$. Remove this arm from $I_k$. If $I_k$ is empty, remove $I_k$ from the set of alive intervals.
\ENDFOR
\end{algorithmic}
\end{algorithm}

In order to bound the regret of UCBF, we show that it can be decomposed into the sum of a discretization term and of the cost of learning on a finite multi-armed bandit. First, we discuss the optimal number of intervals $K$. In a second time, we present new arguments for bounding more tightly the discretization error. Then, we show that by contrast to the classical CAB, the contribution of slightly sub-optimal arms to the regret is much more limited in F-CAB problems, before obtaining a high-probability bound on the regret of our algorithm.

By dividing the continuous space of covariates into intervals, we approximate the $\mathcal{X}$-armed setting with a finite multi-armed bandit problem, which we define bellow.

\textbf{The Finite Multi-armed Bandit (F-MAB) : } 
An agent  is given a budget $T$ and a set of $K$ arms. At each step, the agent pulls an arm $k_t$ and receives a reward $y_{t}$ sampled independently with mean $m_{k_t}$. Each arm $k \in \{1,..., K\}$ can only be pulled a finite number of time, denoted $N_k$, before it dies. The agent aims at maximising the sum of her rewards.

The approximation of the $N_k$ arms in an interval $I_k$ as a single arm that can be pulled $N_k$ times is done at the price of a discretization error, as we are now forced to treat all arms in the same interval equally, regardless of possible differences of rewards within an interval. The choice of the number of intervals $K$ determines both the cost of this approximation, and the difficulty of the F-MAB problem. To analyse the dependence of those quantities on $K$, we introduce the strategy of an oracle agent facing the corresponding F-MAB problem (i.e., of an agent knowing in hindsight the expected mean rewards $m_k = \int_{I_k} m(a) da$ for pulling an arm in any interval $I_k$, and treating all arms in the same interval equally). We denote this strategy  by $\phi^d$. Assume, for the sake of simplicity, that the intervals $I_1, ..., I_K$ have been reordered by decreasing mean reward, and that there exists $f \in \{1, ..., K\}$ such that $T = N_1 + ... + N_f$. Then, $\phi^d$ pulls all arms in the intervals $I_1$ up to $I_f$. 

We can conveniently rewrite the regret $R_T$ as the sum of the regret of $\phi^d$, and of the difference between the cumulative rewards of $\phi^d$ and that of the strategy $\phi$ :

\begin{equation}\label{eq:dec_discr+learning}
     R_T = \underbrace{\underset{1 \leq t \leq T }{\sum} m\left(a_{\phi^*(t)}\right) - \underset{1 \leq t \leq T}{\sum} m\left(a_{\phi^d(t)}\right)}_{\displaystyle R_T^{(d)}} +\underbrace{ \underset{1 \leq t \leq T}{\sum} m\left(a_{\phi^d(t)}\right) - \underset{1 \leq t \leq T}{\sum} m\left(a_{\phi(t)}\right)}_{\displaystyle  R_T^{(FMAB)}}.
\end{equation}

The regret $R_T^{(d)}$ is the regret suffered by an agent with hindsight knowledge of the expected mean rewards for the different intervals. It can be viewed as the discretization error. The additional regret $R_T^{(FMAB)}$ corresponds to the cost of learning in a F-MAB setting. All arms in an interval $I_k$ have a reward close to $m_k$, so by definition of $\phi^d$ 

\begin{eqnarray}
    R_T^{(FMAB)}&\approx& \underset{k \leq f}{\sum} (N_k - n_k(T))m_k - \underset{k > f}{\sum} n_k(T) m_k.\label{eq:learning_cost}
\end{eqnarray} 

where we recall that $N_k$ denotes the number of arms belonging to interval $I_k$, and $n_k(T)$ denotes the number of arms pulled in this interval by UCBF at time $T$.
 
Choosing the number of intervals thus yields the following tradeoff : a low value of $K$ implies an easier F-MAB problem and a low value of $R_T^{(FMAB)}$, while a high value of $K$ allows for reduction of the discretization error. In finite bandits, exploration is limited : indeed, when increasing the number of intervals in a F-CAB setting, we simultaneously reduce the number of arms in each interval, and we may become unable to differentiate the mean rewards of two intervals close to the threshold $M$. Under the weak Lipschitz assumption, gaps between the rewards of two adjacent intervals are of the order $1/K$. Classical results indicate that $K^2$ pulls are needed to differentiate the mean rewards of those intervals. On the other hand, under Assumption \ref{hyp:uniforme}, the number of arms in each interval is of the order $N/K$. Thus, choosing $K$ larger than $N^{1/3}$ will only increase the difficulty of the multi-armed problem, without reducing the discretization error (since $K^2 \geq N/K$ when $K\geq N^{1/3}$).

\subsection{Bounding the discretization error}

\label{subsec:discretization}

Equation \eqref{eq:dec_discr+learning}  indicates that the regret can be decomposed as the sum of a discretization error and of the regret on the corresponding multi-armed bandit. In order to bound this discretization error, usual methods from continuum-armed bandits rely on bounding the difference between the expected reward of an arm and that of its interval by $L/K$. Thus, at each step, an algorithm knowing only the best interval may suffer a regret of the order $O(1/K)$, and the difference between the cumulative rewards of $\phi^d$ and $\phi^*$ is of the order $O(T/K)$. This argument yields sub-optimal bounds in F-CAB problems: indeed, the majority of the terms appearing in $R_T^{(d)}$ are zero, as $\phi^*$ and $\phi^{(d)}$ mostly select the same arms. 

To obtain a sharper bound on the discretization error $R_T^{(d)}$, we analyse more carefully the difference between those strategies. More precisely, we use concentrations arguments  to show that under Assumption \ref{hyp:uniforme}, $m(a_{\phi^*(T)})$ and $m_f$ are close to $M$. This result implies that under the weak Lipschitz assumption, for any pair of arms $(a_i, a_j)$ respectively selected by $\phi^*$ but not by $\phi^d$ and vice versa, $m(a_i) - m(a_j) = O(L/K)$. Finally, the margin assumption allows us to bound the number of those pairs, thus proving the following Lemma.

\begin{lem}\label{lem:bound_Rd}
Assume that $K \leq N^{2/3}$ and $K > p^{-1} \lor (1-p)^{-1}$. Under Assumptions \ref{hyp:uniforme}, \ref{hyp:lip} and \ref{hyp:beta}, there exists a constant $C_{L,Q}$ depending on $L$ and $Q$ such that with probability larger than $1 - 6e^{-2N/K^2} - 2e^{-N^{1/3}/3}$,
\[ R_T^{(d)} \leq C_{L,Q}\frac{T}{pK^2}.\]
\end{lem}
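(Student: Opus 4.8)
The plan is to exploit that $\phi^*$ and $\phi^d$ both pull exactly $T$ arms and agree on the overwhelming majority of them, so that $R_T^{(d)}$ only sees their symmetric difference. Write $S^*$ for the set of $T$ arms of largest mean reward (pulled by $\phi^*$) and $S^d$ for the set of arms lying in the $f$ best intervals (pulled by $\phi^d$). Since $|S^*|=|S^d|=T$ we get $|S^*\setminus S^d|=|S^d\setminus S^*|=:P$, and
\[ R_T^{(d)} = \sum_{a \in S^* \setminus S^d} m(a) - \sum_{a \in S^d \setminus S^*} m(a). \]
Pairing the two symmetric-difference sets arbitrarily, it then suffices to control (i) the number of pairs $P$ and (ii) the per-pair gap $m(a_i)-m(a_j)$ for $a_i \in S^*\setminus S^d$ and $a_j \in S^d\setminus S^*$. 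The target $C_{L,Q}\,T/(pK^2)$ will follow once I show $P = O_{L,Q}(N/K)$ and each gap is $O_{L,Q}(1/K)$, using $N/K^2 = T/(pK^2)$.

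\textbf{Concentration of the thresholds.} Set $\tau = m(a_{\phi^*(T)})$ and $\mu = m_f$. I would first show both lie in $[M-c_1/K,\,M+c_1/K]$ with probability at least $1-6e^{-2N/K^2}$. For $\tau$, the count $\sum_i \mathds{1}\{m(a_i)\ge M\pm\eta\}$ is a binomial whose mean differs from $T=pN$ by at most $QN\eta$ by the margin condition \eqref{eq:margin}, while the underlying Lebesgue measure moves by at least $\eta/L$ across one crossing by the weak Lipschitz condition \eqref{eq:Lipshitz}; choosing $\eta \asymp 1/K$ and applying Hoeffding with deviation $N/K$ (which produces the $e^{-2N/K^2}$ factor, and where $K>p^{-1}\lor(1-p)^{-1}$ keeps the levels admissible) pins $\tau = M+O(1/K)$. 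For $\mu$, the same Hoeffding control of the cumulative counts $\sum_{k\le j}N_k \sim \mathrm{Bin}(N,j/K)$ localizes the cutoff index at $f = pK \pm O(1)$, after which a purely deterministic comparison of the $(pK)$-th largest interval mean to $M$, again via \eqref{eq:Lipshitz} and \eqref{eq:margin}, yields $\mu = M+O(L/K)$.

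\textbf{Per-pair gap and counting.} Using Step 2 and \eqref{eq:Lipshitz}, I would show every disagreement arm $a$ satisfies $|m(a)-M|\le c_2 L/K$. For $a_j\in S^d\setminus S^*$ (its interval has mean $\ge\mu$, yet $m(a_j)<\tau$) there is a point $x_0$ in the same interval with $m(x_0)\ge m_k\ge\mu$; applying \eqref{eq:Lipshitz} to $(a_j,x_0)$ and splitting on whether $|M-m(a_j)|$ exceeds $L/K$ forces $m(x_0)$ back to within $O(1/K)$ of $M$, whence $m(a_j)\ge M-O(L/K)$, while $m(a_j)<\tau\le M+O(1/K)$; the case $a_i\in S^*\setminus S^d$ is symmetric. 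Hence each pair contributes at most $2c_2L/K$. Consequently all disagreement arms lie in $\{x:|m(x)-M|\le c_2L/K\}$, which by \eqref{eq:margin} has Lebesgue measure at most $Qc_2L/K$; since $K\le N^{2/3}$ forces a band of measure $\asymp 1/K$ to contain on the order of $N/K\ge N^{1/3}$ arms in expectation, a multiplicative Chernoff bound gives $P\le 2Qc_2LN/K$ with probability at least $1-2e^{-N^{1/3}/3}$. Combining, $R_T^{(d)} \le P\cdot 2c_2L/K \le 4Qc_2^2L^2\,N/K^2 = C_{L,Q}\,T/(pK^2)$ with $C_{L,Q}\asymp QL^2$, on the stated event.

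\textbf{Main obstacle.} I expect the hardest part to be Step 2, and within it the control of $\mu=m_f$ at the $O(1/K)$ scale. Unlike $\tau$, which is a global order statistic, $\mu$ is governed jointly by the deterministic interval means $m_k=\int_{I_k}m$ and the random counts $N_k$ that fix the cutoff $f$, so one must simultaneously concentrate the cumulative counts and carry out a delicate deterministic comparison between the $(pK)$-th largest interval mean and $M$. This comparison is subtle because \eqref{eq:Lipshitz} only provides regularity near crossings of $M$: one has to argue that any interval whose mean is within $O(L/K)$ of $M$ is automatically in the Lipschitz regime before the margin bound on the number of near-threshold intervals can be invoked.
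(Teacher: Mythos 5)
Your proposal follows essentially the same route as the paper's own proof: it concentrates the order-statistic threshold $m(a_{\phi^*(T)})$ and the cutoff interval mean near $M$ via Hoeffding (with the weak Lipschitz condition supplying the $\Omega(1/K)$ band-measure lower bound, and $K>p^{-1}\lor(1-p)^{-1}$ keeping the levels admissible), shows through the within-interval Lipschitz control that every disagreement arm lies in a band of width $O(L/K)$ around $M$, and counts those arms by the margin condition plus a Bernstein bound under $K\leq N^{2/3}$, exactly as in Lemmas \ref{lem:control_hat_f}--\ref{lem:E_a}. The pairing decomposition, the event probabilities $6e^{-2N/K^2}$ and $2e^{-N^{1/3}/3}$, and the final product bound $O(N/K)\times O(L/K)=O(T/(pK^2))$ all coincide with the paper's argument.
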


We underline that this discretization error is lower than the unavoidable error of order $O(T/K)$, encountered in classical CAB settings.

\subsection{Upper bound on the regret of UCBF}
\label{subsec:upper_bound}

Before stating our result, we bound the regret due to slightly sub-optimal arms. It is known that in the classical CAB model, slightly sub-optimal arms contribute strongly to the regret, as any agent needs at least $O(\Delta^{-2})$ pulls to detect an interval sub-optimal by a gap $\Delta$. When $\Delta$ is smaller than $\sqrt{1/T}$, the agent spends a budget proportional to $T$ to test whether this interval is optimal or not, which leads to regret of the order $O(\Delta T)$. By contrast, in a F-CAB setting, pulling arms from an interval sub-optimal by a gap $\Delta$ until it dies, contributes to the regret by a factor at most $\Delta N/K$. Under Assumptions \ref{hyp:uniforme}, \ref{hyp:lip} and \ref{hyp:beta}, the number of intervals with mean rewards sub-optimal by a gap smaller than $\Delta$ is $O(K\Delta)$. Thus, we are prevented from mistakenly selecting those slightly sub-optimal intervals too many times. This is summarised in the following remark.

\begin{remark}\label{rem:finiteness}
Under hypothesis \ref{hyp:uniforme}, \ref{hyp:lip} and \ref{hyp:beta}, intervals sub-optimal by a gap $\Delta$ contribute to the regret by a factor at most $O(\Delta^2T/p)$.
\end{remark}

Remark \ref{rem:finiteness} along with Lemma \ref{lem:bound_Rd} help us to bound  with high probability the regret of Algorithm UCBF for any mean payoff function $m$ satisfying Assumptions \ref{hyp:lip} and \ref{hyp:beta}, for the choice $K = \lfloor N^{1/3}\log(N)^{-2/3}\rfloor$ and $\delta = N^{-4/3}$. The proof of Theorem \ref{thm:bound_high_prob} is deferred to the Appendix.

\begin{thm}\label{thm:bound_high_prob}
Assume that $\lfloor N^{1/3}\log(N)^{-2/3}\rfloor> p^{-1} \lor (1-p)^{-1}$. Under Assumption \ref{hyp:uniforme}, \ref{hyp:lip} and \ref{hyp:beta}, there exists a constant $C_{L,Q}$ depending only on $L$ and $Q$ such that for the choice $K = \lfloor N^{1/3}\log(N)^{-2/3}\rfloor$ and $\delta = N^{-4/3}$,
\begin{eqnarray*}
    R_T \leq C_{L,Q} \, (T/p)^{1/3}\log(T/p)^{4/3}
\end{eqnarray*}
with probability at least $1 - 12(N^{-1} \lor e^{-N^{-1/3}/3})$.
\end{thm}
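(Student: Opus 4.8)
The plan is to bound separately the two terms of the decomposition~\eqref{eq:dec_discr+learning} and to check that the choice $K = \lfloor N^{1/3}\log(N)^{-2/3}\rfloor$ balances them at the target rate. The discretization term is handled directly by Lemma~\ref{lem:bound_Rd}: the hypotheses $K \leq N^{2/3}$ and $K > p^{-1}\lor(1-p)^{-1}$ both hold (the latter by assumption), so on the good event of that lemma $R_T^{(d)} \leq C_{L,Q}\,T/(pK^2)$. Since $K^2$ is of order $N^{2/3}\log(N)^{-4/3}$ and $N = T/p$, this is of order $(T/p)^{1/3}\log(T/p)^{4/3}$, which is already the announced rate. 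Everything therefore reduces to showing that the learning term $R_T^{(FMAB)}$ is of the same order.

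The first step for $R_T^{(FMAB)}$ is to rewrite it exactly in terms of gaps. Setting $\Delta_k = |m_k - m_f|$ and using that $\phi^d$ and $\phi$ each perform exactly $T$ pulls, so that $\sum_{k\leq f}(N_k - n_k(T)) = \sum_{k>f} n_k(T)$, the $m_f$ terms cancel in~\eqref{eq:learning_cost} and
\[
R_T^{(FMAB)} = \sum_{k\leq f}(N_k - n_k(T))\,\Delta_k + \sum_{k>f} n_k(T)\,\Delta_k,
\]
where the first sum counts optimal arms the algorithm failed to pull and the second counts sub-optimal arms it pulled by mistake. I would then work on a good event on which (i) Lemma~\ref{lem:bound_Rd} holds, (ii) the counts concentrate, $N_k$ of order $N/K$ for every $k$ under Assumption~\ref{hyp:uniforme}, and (iii) the Hoeffding bounds $|\widehat m_k(n) - m_k| \leq \sqrt{\log(T/\delta)/(2n)}$ hold simultaneously over all intervals and sample sizes. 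A union bound over the $\leq K$ intervals and $\leq T$ rounds makes (iii) fail with probability at most $2K\delta = O(N^{-1})$ for $\delta = N^{-4/3}$, and this is what produces the final probability statement once the failure probabilities of (i) and (ii) are added.

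For the over-pull sum, the usual optimism argument gives, on event (iii), $n_k(T) \leq \min\{N_k,\; c\log(T/\delta)/\Delta_k^2\}$ for every sub-optimal interval: after $c\log(T/\delta)/\Delta_k^2$ pulls its index falls below $m_f$, and while an optimal interval is still alive it is preferred. I would then decompose the gaps dyadically: combining the margin Assumption~\ref{hyp:beta} with the weak Lipschitz Assumption~\ref{hyp:lip} shows that the number of intervals with gap in $(2^{-j-1},2^{-j}]$ is $O(K\,2^{-j})$. Splitting at the critical gap $\Delta^\star$ of order $\sqrt{K\log(T/\delta)/N}$ --- below which an interval dies before optimism takes effect, so the cap is $N_k$, and above which the cap is $c\log(T/\delta)/\Delta_k^2$ --- each dyadic band contributes $O(K\log(T/\delta))$, and summing the $O(\log(1/\Delta^\star)) = O(\log N)$ bands gives $\sum_{k>f} n_k(T)\Delta_k = O\big(K\log(T/\delta)\log(1/\Delta^\star)\big)$. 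The small-gap bands are precisely the content of Remark~\ref{rem:finiteness}, which already bounds their total by $O((\Delta^\star)^2 T/p) = O(K\log(T/\delta))$.

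The under-pull sum is the main obstacle, because --- unlike in ordinary UCB, where failing to pull an arm carries no cost --- here arm deaths force the algorithm to leave some optimal arms unpulled. The key point to establish is that these are only near-threshold arms: on event (iii) an optimal interval whose gap exceeds $\Delta^\star$ keeps an index above $m_f$, so it wins the index comparison against any sufficiently-sampled sub-optimal interval and is exhausted before budget is permanently diverted from it; hence only optimal intervals with gap $O(\Delta^\star)$ can remain alive at time $T$. The budget identity $\sum_{k\leq f}(N_k-n_k(T)) = \sum_{k>f}n_k(T)$ then transfers the over-pull count to these small-gap intervals, and the same margin/dyadic bound yields a contribution of the same order $O(K\log(T/\delta)\log(1/\Delta^\star))$. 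Making this displacement argument fully rigorous --- tracking which optimal intervals are starved of budget as arms die --- is where the care is needed. Finally, plugging $K$ of order $N^{1/3}\log(N)^{-2/3}$, $\log(T/\delta) = O(\log N)$ and $\log(1/\Delta^\star) = O(\log N)$ into both terms gives $O(N^{1/3}\log(N)^{4/3}) = O((T/p)^{1/3}\log(T/p)^{4/3})$; adding $R_T^{(d)}$ and collecting the failure probabilities completes the proof.
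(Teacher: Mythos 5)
Your architecture mirrors the paper's own proof: the decomposition \eqref{eq:dec_discr+learning} with Lemma \ref{lem:bound_Rd} for the discretization term, a split of $R_T^{(FMAB)}$ into over-pulls of sub-optimal intervals and under-pulls of optimal ones, optimism plus a dyadic gap decomposition under Assumption \ref{hyp:beta} for the former (this is exactly the paper's Lemmas \ref{lem:bound_nl} and \ref{lem:R_subopt}, and your version is sound, modulo bookkeeping: the paper works with the empirical index $\hat f$ rather than $f$, treats the partially-filled boundary interval $I_{\hat f+1}$ as a separate term $R_{\hat f+1}$, and notes that \eqref{eq:learning_cost} is only an approximation since arms inside an interval do not all have reward $m_k$, so the ``exact'' rewriting must be done at the level of individual arms).

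The genuine gap is in the under-pull term, precisely where you concede that ``care is needed.'' Your justification of the key claim --- that any optimal interval still alive at time $T$ has gap $O(\Delta^\star)$ --- is that such an interval ``wins the index comparison against any sufficiently-sampled sub-optimal interval and is exhausted before budget is permanently diverted from it.'' This does not follow: winning comparisons against well-sampled sub-optimal intervals never forces exhaustion, since the budget can be spent on other optimal intervals and on not-yet-sufficiently-sampled sub-optimal ones, and indeed some optimal intervals genuinely do stay alive. The paper's actual argument (Lemma \ref{lem:used_up}) is a counting contradiction: if $I_k$ with $m_k \geq M + AL/K$ is alive, then by Lemma \ref{lem:bound_nl} (applied with $I_k$ as the alive reference) every interval with mean in $\left[m_{\hat f},\, M + AL/(2K)\right]$ has at most $O\left(K^2\log(T/\delta)/(AL)^2\right)$ pulls, hence at least $N/(2K) - O\left(K^2\log(T/\delta)/(AL)^2\right)$ unpulled arms; the budget identity forces the total of these unpulled arms to be at most the number of sub-optimal pulls, which is $O\left(QK^2\log(T/\delta)/(AL)\right)$. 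To get a contradiction one needs a \emph{lower} bound, of order $A$, on the number of intervals whose means are trapped in $\left[M + 4L/K,\, M + AL/(4K)\right]$. This is not a consequence of the margin assumption, which only \emph{upper}-bounds the measure near level $M$; it comes from a converse estimate using continuity of $m$ and the weak Lipschitz condition \ref{hyp:lip} (the $a^{(1)}, a^{(2)}, a^{(3)}$ construction in the paper: $m$ must traverse all intermediate levels at slope at most $L$, so it occupies covariate length at least of order $A/K$ in that band, hence order $A$ intervals). This ingredient is entirely absent from your proposal, and without it the displacement argument you outline cannot close; with it, your final accounting (number of unpulled optimal arms times their maximal gap, both controlled through the budget identity and the dyadic bound) does give the claimed $O\big((T/p)^{1/3}\log(T/p)^{4/3}\big)$ rate.
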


\vspace{-0.2cm}
\begin{skproof}
We use Lemma \ref{lem:bound_Rd} to bound the discretization error $R_T^{(d)}$. The decomposition in Equation \eqref{eq:dec_discr+learning} shows that it is enough to bound $R_T^{(FMAB)}$.
Recall that $\phi^d$ pulls all arms in the intervals $I_1$, $I_2$, up to $I_f$, while UCBF pulls $n_k(T)$ arms in all intervals $I_k$. Using Equation \eqref{eq:learning_cost}, we find that
\begin{eqnarray}
    R_T^{(FMAB)} &\approx& \underset{k \leq f}{\sum} (N_k - n_k(T)) (m_k-M) + \underset{k > f}{\sum} n_k(T) (M-m_k)\label{eq:fmab}\nonumber
\end{eqnarray} 
where we have used that $\sum_{k \leq f} N_k = T = \sum_{k \leq K} n_k(T)$, which in turns implies $\sum_{k \leq f} N_k - n_k(T)  = \sum_{k > f} n_k(T)$. 

On the one hand, $R_{subopt} = \sum_{k > f} n_k(T) (M-m_k)$ corresponds to the regret of pulling arms in sub-optimal intervals. We use Remark \ref{rem:finiteness} to bound the contribution of intervals sub-optimal by a gap $O(1/K)$ by a factor of the order $O(T/(pK^2))$. Classical bandit technics allow to bound the contribution of the remaining sub-optimal intervals : under Assumptions \ref{hyp:uniforme}-\ref{hyp:beta}, they contribute to the regret by a term $O(K\log(T)\log(K))$. Thus, for the choice $K = N
^{1/3}\log(N)^{-2/3}$, we can show that $R_{subopt} = O((T/p)^{1/3}\log(T/p)^{4/3})$.

On the other hand, the term $R_{opt} = \sum_{k \leq f} (N_k - n_k(T)) (m_k-M)$ is specific to finite bandit problems. The following argument shows that UCBF kills the majority of optimal intervals, and that optimal intervals $I_k$ alive at time $T$ are such that $f-k$ is bounded by a constant.

Let $I_k$ be an interval still alive at time $T$ such that $m_k > M$. Then the interval $I_k$ was alive at every round, and any interval selected by $\phi$  must have appeared as a better candidate than $I_k$. Using the definition of UCBF and Assumptions \ref{hyp:beta}, we can show that the number of arms pulled from intervals with mean reward lower than $m_k$ is bounded by a term $O(N/K + K^2\log(T))$. 

Since $T = N_1 + ... + N_f$ arms are pulled in total, the number of arms pulled from intervals with mean reward lower than $m_k$ is at least $T - (N_1 + ... + N_k) = N_{k+1} + ... + N_f \approx (f-k)N/K$. Therefore, no interval $I_k$ such that $(f-k)N/K \geq O(N/K + K^2\log(T))$ can be alive at time $T$. For the choice of $K$ described above, $(1 + K^3\log(T)/N)$ is upper bounded by a constant. Thus, there exists a constant $C>0$ such that for all $k \leq f - C$, all intervals $I_k$ have died before time $T$. We note that the number of arms in any interval is of the order $N/K$, so $R_{opt} = \sum_{f - C \leq k \leq f} (N_k - n_k(T)) (m_k-M) \leq C(m_{(f-C)}-M) N/K$. To conclude, we use Assumption \ref{hyp:lip} to show that $m_{(f-C)}-M = O(CL/K)$, and find that $R_{opt} = O(N/K^2) = O(T/(pK^2))$.
\end{skproof}

Under Assumptions similar to \ref{hyp:lip} and \ref{hyp:beta}, \cite{Auer_continuum} show that the regret of UCBC in CAB problems is $O(\sqrt{T}\log(T))$ for the optimal choice $K = \sqrt{T}/\log(T)$. By contrast, in the F-CAB problem, Theorem \ref{thm:bound_high_prob} indicates that when $p$ is a fixed constant, i.e. when the number of arms is proportional to the budget, the optimal choice for $K$ is of the order $T^{1/3}\log(T)^{-2/3}$ and the regret scales as $O(T^{1/3}\log(T)^{4/3})$. In this regime, regrets lower than that in CAB settings are thus achievable. As $N \to \infty$ and $p \to 0$, both the regret and the optimal number of intervals increase. To highlight this phenomenon, we consider regimes where $T = 0.5N^{\alpha}$ for some $\alpha \in [0,1]$ (the choice $T\leq0.5N$ reflects the fact that we are interested in settings where $T$ may be small compared to $N$, and is arbitrary). Theorem \ref{thm:bound_high_prob} directly implies the following Corollary.

\begin{cor}\label{cor:transition} Assume that $T = 0.5N^{\alpha}$ for some $\alpha \in (2/3 + \epsilon_N, 1]$, where we define $\epsilon_N = \left(\frac{2}{3}\log\log(N) + \log(2)\right) /\log(N)$. Then, for the choice $\delta = N^{-4/3}$ and  $K = \lfloor \alpha^{2/3} (2T)^{1/(3\alpha)}\log(2T)^{-2/3}\rfloor$, with probability at least $1 - 12(N^{-1} \lor e^{-N^{-1/3}/3})$ , 
$$R_T \leq C_{Q,L} T^{1/(3\alpha)}\log(T)^{4/3}$$
for some constant $C_{Q,L}$ depending on $Q$ and $L$.
\end{cor}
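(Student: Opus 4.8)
The plan is to obtain the Corollary as a direct specialization of Theorem \ref{thm:bound_high_prob}, the only substantive content being the change of variables $T = 0.5N^{\alpha}$ together with a verification that the theorem's hypotheses are met. The crucial observation is that the bound of Theorem \ref{thm:bound_high_prob} depends on $N$, $T$ and $p$ only through the combination $T/p$, and since $p = T/N$ we have the exact identity $T/p = N$. In other words, the theorem already reads $R_T \leq C_{L,Q}\, N^{1/3}\log(N)^{4/3}$ with high probability, so the whole task reduces to re-expressing $N$ in terms of $T$ and $\alpha$ via $N = (2T)^{1/\alpha}$.

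First I would check that the hypothesis $\lfloor N^{1/3}\log(N)^{-2/3}\rfloor > p^{-1}\lor(1-p)^{-1}$ of Theorem \ref{thm:bound_high_prob} is implied by $\alpha > 2/3 + \epsilon_N$. Substituting $p = 0.5 N^{\alpha-1}$ gives $p^{-1} = 2N^{1-\alpha}$, while $p \to 0$ forces $(1-p)^{-1}$ to remain close to $1$ and hence below $p^{-1}$; thus the binding constraint is $N^{1/3}\log(N)^{-2/3} > 2N^{1-\alpha}$. Taking logarithms, this rearranges to $(\alpha - 2/3)\log(N) > \log(2) + \tfrac{2}{3}\log\log(N)$, that is, exactly $\alpha - 2/3 > \epsilon_N$. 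This confirms that the definition of $\epsilon_N$ is precisely what is required to activate the theorem.

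Next I would verify that the choice of $K$ in the Corollary coincides with the choice $K = \lfloor N^{1/3}\log(N)^{-2/3}\rfloor$ required by the theorem. Using $N = (2T)^{1/\alpha}$ we get $N^{1/3} = (2T)^{1/(3\alpha)}$ and $\log(N) = \alpha^{-1}\log(2T)$, so that $N^{1/3}\log(N)^{-2/3} = \alpha^{2/3}(2T)^{1/(3\alpha)}\log(2T)^{-2/3}$, which matches the displayed expression for $K$. With the hypotheses and the value of $K$ in place, the conclusion of Theorem \ref{thm:bound_high_prob} applies verbatim, including the same high-probability factor $1 - 12(N^{-1}\lor e^{-N^{-1/3}/3})$.

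Finally I would push the same substitution through the bound itself: $(T/p)^{1/3} = N^{1/3} = (2T)^{1/(3\alpha)}$ and $\log(T/p)^{4/3} = (\alpha^{-1}\log(2T))^{4/3}$, yielding $R_T \leq C_{L,Q}\,\alpha^{-4/3}\,2^{1/(3\alpha)}\,T^{1/(3\alpha)}\log(2T)^{4/3}$. The remaining step is purely cosmetic: over the range $\alpha \in (2/3,1]$ the prefactors $\alpha^{-4/3} \leq (3/2)^{4/3}$ and $2^{1/(3\alpha)} \leq \sqrt{2}$ are bounded by absolute constants, and $\log(2T)^{4/3} \leq C\log(T)^{4/3}$ for $T$ bounded away from $1$, so all of these fold into a new constant $C_{Q,L}$, giving $R_T \leq C_{Q,L}\,T^{1/(3\alpha)}\log(T)^{4/3}$. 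There is no genuine obstacle here; the only points demanding care are the bookkeeping with the floor in the constraint on $\alpha$ (which becomes immaterial for large $N$, since the strict inequality leaves a polynomially growing margin $N^{\alpha - 2/3}$ up to logarithmic factors) and ensuring that the $\alpha$-dependent prefactors remain controlled uniformly over $(2/3,1]$ rather than blowing up as $\alpha \downarrow 2/3$.
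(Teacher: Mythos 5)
Your proof is correct and is essentially the paper's own argument: the paper offers no separate proof, stating only that the corollary ``follows directly'' from Theorem \ref{thm:bound_high_prob}, and your work --- the substitution $N = (2T)^{1/\alpha}$, the check that the definition of $\epsilon_N$ is exactly equivalent to the theorem's hypothesis $N^{1/3}\log(N)^{-2/3} > p^{-1}\lor(1-p)^{-1}$, the identification of the corollary's $K$ with $\lfloor N^{1/3}\log(N)^{-2/3}\rfloor$, and the uniform control of the prefactors $\alpha^{-4/3}$ and $2^{1/(3\alpha)}$ over $\alpha \in (2/3,1]$ --- is precisely that bookkeeping made explicit. If anything, you are slightly more careful than the paper, since you at least flag the floor-versus-strict-inequality subtlety in the hypothesis, which the paper passes over silently.
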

Corollary \ref{cor:transition} indicates that as $\alpha$ decreases, the regret increases progressively from a F-CAB regime to a CAB regime. When the budget is a fixed proportion of the number of arms, the regret scales as $O(T^{1/3}\log(T)^{4/3})$ for the optimal number of intervals $K$ of the order $T^{1/3}\log(T)^{1/2}$. As $p$ decreases and $\alpha \in (2/3 + \epsilon_N, 1]$, the regret increases as $O(T^{1/(3\alpha)}\log(T)^{4/3})$ for $K$ of the order $T^{1/(3\alpha)}\log(T)^{-2/3}$. In the limit $\alpha \to 2/3 + \epsilon_n$, the regret rate becomes $R_T = O(\sqrt{T}\log(T))$ for the optimal number of intervals of the order $\sqrt{T}/\log(T)$, which corresponds to their respective values in the CAB setting.

To understand why $\alpha = 2/3 + \epsilon_N$ corresponds to a transition from a F-CAB to a CAB setting, note that $\alpha = 2/3 + \epsilon_N$ implies $T = N/K$ : in other words, the budget becomes of the order of the number of arms per interval.  Thus, when $\alpha >  2/3 + \epsilon_N$, the oracle strategy exhausts all arms in the best interval, and it must select arms in intervals with lower mean rewards. In this regime, we see that the finiteness of the arms is indeed a constraining issue. On the contrary, if $\alpha \leq  2/3 + \epsilon_N$, no interval is ever exhausted. The oracle strategy only selects arms from the interval with highest mean reward, and our problem becomes similar to a CAB problem. Finally, we underline that when $\alpha \leq 2/3 + \epsilon_N$ the analysis becomes much simpler. Indeed, results can be directly inferred from \cite{Auer_continuum} by noticing that no interval is ever exhausted, and that Algorithm UCBF is therefore a variant of Algorithm UCBC. In this case, the optimal choice for the number of intervals remains $K = \sqrt{T}/\log(T)$, and yields a regret bound $R_T = O(\sqrt{T}\log(T))$.

\section{A lower bound}
\label{sec:lower_bound}
A careful analysis of the proof of Theorem \ref{thm:bound_high_prob} reveals that all intervals with mean reward larger than $M$ plus a gap $O(L/K)$ have died before time $T$. On the other hand, all intervals with mean rewards lower than $M$ minus a gap $O(L/K)$ have been selected but a logarithmic number of times. In other words, the algorithm UCBF is able to identify the set corresponding to the best $p$ fraction of the rewards, and it is only mistaken on a subset of measure $O(1/K)$ corresponding to arms $a_i$ such that $\vert m(a_i) - M \vert =  O(1/K)$. We use this remark to derive a lower bound on the regret of any strategy for mean payoff function $m$ in the set $\mathcal{F}_{p, L,Q}$ defined bellow.
\begin{definition}
For $p \in (0,1)$, $L>0$ and $Q>0$, we denote by $\mathcal{F}_{p, L,Q}$ the set of functions $m :[0,1] \rightarrow [0,1]$ that satisfy Equations \eqref{eq:Lipshitz} and \eqref{eq:margin}.
\end{definition}

To obtain our lower bound, we construct two functions $m_1$ and $m_2$ that are identical but on two intervals, each one of length $N^{-1/3}$. On those intervals, $m_1$ and $m_2$ are close to the threshold $M$ separating rewards of the fraction $p$ of the best arms from the rewards of the remaining arms. One of these intervals corresponds to arms with reward above $M$ under the payoff function $m_1$ : more precisely, on this interval $m_1$ increases linearly from $M$ to $M + 0.5LN^{-1/3}$, and decreases back to $M$. On this interval, $m_2$ decreases linearly from $M$ to $M - 0.5LN^{-1/3}$, and increases back to $M$. We define similarly $m_1$ and $m_2$
on the second interval by exchanging their roles, and choose the value of $m_1$ and $m_2$ outside of those intervals so as to ensure that both functions belong to the set $\mathcal{F}_{L,Q}$ for some $Q$ large enough.

Now, any reasonable strategy pulls arms in both intervals until it is able to differentiate the two mean reward functions, or equivalently until it is able to determine which interval contains optimal arms. As the average payments of those two intervals differ by $\Omega(N^{-1/3})$, this strategy must pull $\Omega(N^{2/3})$ arms in both intervals. This is possible since there are $N^{2/3}$ arms in each interval. Since arms in one of those intervals are sub-optimal by a gap of the order $N^{-1/3}$, this strategy suffers a regret $\Omega(N^{1/3})$.

In order to formalise this result, we stress the dependence of the regret on the strategy $\phi$ and the mean reward function $m$ by denoting it $R_T^{\phi}(m)$. Our results are proved for reward $y$ that are Bernoulli random variables (note that this is a special case of the F-CAB problem).

\begin{hyp}\label{hyp:Bernoulli}
For $i \in \{1, ..., N\}$,  $y_i \sim$ Bernoulli$(m(a_i))$.
\end{hyp}

In order to simplify the exposition of our results, we assume that the arms $a_i$ are deterministic.

\begin{hyp}\label{hyp:a_lb}
For $i \in \{1, ..., N\}$,  $a_i = \frac{i}{N}$.
\end{hyp}

\begin{thm}\label{thm:lower_bound} For all $p \in (0,1)$, all $L>0$, all $Q>(6/L\lor12)$, there exists a constant $C_{L}$ depending on $L$ such that under Assumptions \ref{hyp:a_lb} and \ref{hyp:Bernoulli}, for all $N \geq C_{L}(p^{-3}\lor(1-p)^{-3})$,
\begin{eqnarray*}
    \underset{\phi}{\inf} \underset{m \in \mathcal{F}_{p,Q,L}}{\sup} \mathbb{P} \left(R_T^{\phi}(m) \geq 0.01 T^{1/3}p^{-1/3}\right) \geq 0.1.
\end{eqnarray*}
\end{thm}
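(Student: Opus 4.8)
The plan is to prove the lower bound by Le Cam's two‑point method: I will exhibit two reward functions $m_1,m_2\in\mathcal{F}_{p,L,Q}$ that are statistically hard to tell apart yet demand opposite behaviours from any good strategy, and then show that no test based on the observations can separate them reliably. Following the construction sketched above, fix two disjoint intervals $A,B\subset[0,1]$, each of length $N^{-1/3}$ (hence each containing exactly $N^{2/3}$ arms under Assumption~\ref{hyp:a_lb}), placed inside the region where the common part of $m$ equals the threshold $M$, which I set to $M=1/2$ for later control of the Bernoulli divergences. On $A$ I let $m_1$ be the ``tent'' $M+\delta(\cdot)$ rising linearly from $M$ to $M+\tfrac12 LN^{-1/3}$ and back, so $A$ is optimal under $m_1$, while $m_2=M-\delta(\cdot)$ is the reflected valley, so $A$ is sub‑optimal under $m_2$; on $B$ the roles are swapped. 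Outside $A\cup B$ the two functions coincide and are chosen, together with $M$, so that the superlevel set $\{m\ge M\}$ has measure exactly $p$ after adjoining \emph{precisely one} of the two intervals; this is what forces the oracle to exhaust $A$ under $m_1$ and $B$ under $m_2$. Each tent has slope $L$, so both functions are $L$‑weakly‑Lipschitz; the only points with $|M-m|\le\epsilon$ are the tent feet (contributing measure $4\epsilon/L$) together with the fixed outer part, so Assumption~\ref{hyp:beta} holds once $Q>6/L\lor 12$. The hypothesis $N\ge C_L(p^{-3}\lor(1-p)^{-3})$ guarantees there is room to embed two length‑$N^{-1/3}$ intervals inside this margin region and to calibrate $M$ exactly.

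Next I reduce regret to a testing statistic. Writing $S$ for the random set of pulled arms and using $|S|=T$ (truncating the budget only increases regret, so this is without loss), the exchange identity gives $R_T=\sum_{i\in\mathcal{O}\setminus S}(m(a_i)-M)+\sum_{i\in S\setminus\mathcal{O}}(M-m(a_i))$, where $\mathcal{O}=\{m\ge M\}$ is the oracle's top‑$T$ set and every summand is nonnegative. I restrict attention to the central halves $A^\circ,B^\circ$ (each of size $m_0\approx N^{2/3}/2$), on which $\delta$ lies in $[\tfrac14 LN^{-1/3},\tfrac12 LN^{-1/3}]$, so it is of uniform order $N^{-1/3}$. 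Dropping all other nonnegative terms yields, with $N_A=|A^\circ\cap S|$, $N_B=|B^\circ\cap S|$ and $h=\tfrac12 LN^{-1/3}$, the bounds $R_T\ge\tfrac{h}{2}(N_B+(m_0-N_A))$ under $m_1$ and $R_T\ge\tfrac{h}{2}(N_A+(m_0-N_B))$ under $m_2$; the first counts pulls wasted in the sub‑optimal interval plus optimal arms left un‑pulled, and conversely for the second. Setting $Z:=N_A-N_B$, $c=0.01\,T^{1/3}p^{-1/3}=0.01\,N^{1/3}$ and $\theta=m_0-2c/h>0$ (valid once $N\ge C_L(\ldots)$), a regret below $c$ forces $Z>\theta$ under $m_1$ and $Z<-\theta$ under $m_2$. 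Hence, for $G=\{Z>\theta\}$,
\[
\max\big(\mathbb{P}_{m_1}(R_T\ge c),\,\mathbb{P}_{m_2}(R_T\ge c)\big)\ge\tfrac12\big(\mathbb{P}_{m_1}(G^c)+\mathbb{P}_{m_2}(G)\big)\ge\tfrac12\big(1-\mathrm{TV}(\mathbb{P}_{m_1},\mathbb{P}_{m_2})\big).
\]

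It then remains to show the two observation laws are close in total variation. By the divergence‑decomposition lemma for bandits, and because each arm is pulled at most once, $\mathrm{KL}(\mathbb{P}_{m_1}\Vert\mathbb{P}_{m_2})=\sum_{i\in A\cup B}\mathbb{P}_{m_1}(i\in S)\,\mathrm{kl}(m_1(a_i),m_2(a_i))$, where $\mathrm{kl}$ is the Bernoulli divergence. Since $M=1/2$ and the per‑arm gap is at most $h$, one has $\mathrm{kl}(M+\delta,M-\delta)\le c_0\delta^2$, and summing the squared tent over the $\approx N^{2/3}$ arms of each interval gives $\sum_{i\in A\cup B}\delta(a_i)^2=\Theta(L^2)$, so $\mathrm{KL}$ is bounded by a constant. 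Calibrating the tent slope so that this constant stays below $2\cdot 0.8^2$ — which is exactly the role of the gap being $\Theta(N^{-1/3})$ — Pinsker's inequality gives $\mathrm{TV}\le 0.8$, and the displayed chain yields $\ge 0.1$; since $\{m_1,m_2\}\subset\mathcal{F}_{p,L,Q}$ and $\phi$ was arbitrary, taking $\sup_m$ and $\inf_\phi$ preserves this bound. The pivotal calibration is that $1/h^2\asymp N^{2/3}$ equals the number of arms per interval: this is precisely the regime in which distinguishing the two instances costs $\Omega(N^{2/3})$ informative pulls, each of which — being in the wrong interval or crowding an optimal arm out of the fixed budget — converts into $\Omega(N^{-1/3})$ of regret, for a total $\Omega(N^{1/3})=\Omega(T^{1/3}p^{-1/3})$.

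The main obstacle is exactly the coupling in the previous paragraph: I must guarantee that the pulls carrying statistical information about the identity of the instance are the \emph{same} pulls that incur regret. This is delicate because the tent gap vanishes at the interval endpoints, so a strategy could attempt to gather cheap, uninformative ``edge'' samples; restricting both the regret lower bound and the divergence accounting to the central halves $A^\circ,B^\circ$, where $\delta$ is uniformly of order $N^{-1/3}$, is what rules this out and keeps the two estimates on the same footing. The remaining technical points — verifying that $m_1,m_2\in\mathcal{F}_{p,L,Q}$, calibrating $M$ and the outer part so the oracle includes exactly one interval, and checking $\theta>0$ together with the Pinsker constant — all reduce to the condition $N\ge C_L(p^{-3}\lor(1-p)^{-3})$ and the assumption $Q>6/L\lor 12$, and are otherwise routine.
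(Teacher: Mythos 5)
Your architecture is the same as the paper's: a two-point (Le Cam) argument built from mirrored tent/valley perturbations of the payoff around the threshold $M=1/2$ adjacent to $1-p$, a KL decomposition over arms that exploits the fact that each arm is pulled at most once, and a reduction of regret to a test that counts pulls in the perturbed regions; the only cosmetic difference is that you finish with Pinsker's inequality where the paper uses Bretagnolle--Huber, and either closes the argument once the KL is a small constant. The genuine gap is in the calibration of the perturbation, which is the real substance of a constant-explicit statement like this one. You fix the width of each bump at $N^{-1/3}$ and its height at $\tfrac12 L N^{-1/3}$, i.e.\ slope $L$. For large $L$ this makes $\mathrm{KL}(\mathbb{P}_{m_1},\mathbb{P}_{m_2})=\Theta(L^2)$, which is not below your target $2\cdot 0.8^2$; and your proposed repair, flattening the slope, is almost entirely blocked by the margin condition: Assumption \ref{hyp:beta} with $Q$ arbitrarily close to $6/L\lor 12$ forces the slope at the threshold crossings to be at least $6/Q$, i.e.\ essentially $L\land\tfrac12$, so the slope can be lowered only to about $1/2$ (this case happens to work numerically, but you never verify that the margin/KL window is nonempty).

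The failure is fatal for small $L$: Assumption \ref{hyp:lip} caps the slope at any threshold crossing by $L$, so with width $N^{-1/3}$ your construction can generate at most $\Theta(L\,N^{1/3})$ regret, which falls below the claimed $0.01\,T^{1/3}p^{-1/3}=0.01\,N^{1/3}$ as soon as $L$ is small; no slope calibration can help, since the slope can only decrease. The missing idea is that the \emph{width}, not the slope, is the free parameter, and it must scale with $L$: the paper takes slope $\tilde{L}=L\land 0.5$ and half-width $\delta=\alpha(N\tilde{L}^2)^{-1/3}=\alpha\tilde{L}^{-2/3}N^{-1/3}$, so that $\mathrm{KL}(\mathbb{P}_0,\mathbb{P}_1)\le 70.4\,\alpha^3$ (Lemma \ref{lem:KL_01}) while the regret on the bad event is at least $0.22\,\alpha^2\tilde{L}^{-1/3}N^{1/3}\ge 0.22\,\alpha^2N^{1/3}$ (Lemma \ref{lem:R(m)}), both bounds being uniform in $L$; the choice $\alpha=0.23$ then delivers simultaneously the constants $0.01$ and $0.1$ of the statement. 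This $\tilde{L}^{-2/3}$ widening is also precisely why the hypothesis $N\ge C_L(p^{-3}\lor(1-p)^{-3})$ appears: the two bumps of width $\asymp\tilde{L}^{-2/3}N^{-1/3}$ must fit on either side of $1-p$, a constraint your fixed-width construction would never see, which is a sign the $L$-dependence has been lost. If you replace your construction by this $L$-dependent one and keep the rest of your argument (the exchange identity, the statistic $Z$, the KL decomposition, Pinsker), the proof goes through.
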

Theorem \ref{thm:lower_bound} shows that the bound on the regret of UCBF obtained in Theorem \ref{thm:bound_high_prob} is minimax optimal up to a polylogarithmic factor. The proof of Theorem \ref{thm:bound_high_prob} is deferred to the Appendix. Again, we stress the dependence of this regret bound on $T$ by considering regimes where $T = 0.5N^{\alpha}$. The following Corollary follows directly from Theorem \ref{thm:lower_bound}.
\begin{cor}\label{cor:lower_bound} For all $L>0$, $Q>(6/L\lor12)$, there exists a constant $C_{L}$  depending on $L$ such that such that  for all $N>\exp(3C_{L})$ and all $T$ such that $T = 0.5N^{\alpha}$ for some $\alpha \in (2/3 + C_{L}/\log(N), 1]$, under Assumptions \ref{hyp:a_lb} and \ref{hyp:Bernoulli}, 
\begin{eqnarray*}
    \underset{\phi}{\inf} \underset{m \in \mathcal{F}_{0.5N^{\alpha-1},Q,L}}{\sup} \mathbb{P} \left(R_T^{\phi}(m) \geq 0.01 T^{1/(3\alpha)}\right) \geq 0.1.
\end{eqnarray*}
\end{cor}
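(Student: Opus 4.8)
The plan is to prove the lower bound by Le Cam's two-point method, formalizing the construction sketched before the statement. First I would fix $M_0 \in [1/4,3/4]$ and build two reward functions $m_1, m_2 : [0,1] \to [0,1]$ that agree everywhere except on two disjoint intervals $J_1, J_2$, each of length $\ell$, placed just at the boundary of the top-$p$ level set. On $J_1$ the function $m_1$ forms a symmetric tent rising from $M_0$ to $M_0 + s\ell/2$ with slope $s$ and falling back, while $m_2$ forms the mirrored tent dipping to $M_0 - s\ell/2$; on $J_2$ the roles of $m_1$ and $m_2$ are exchanged. Outside $J_1 \cup J_2$ I would set both functions equal to a fixed profile taking a value well above $M_0$ on a ``clearly optimal'' set $S$ of measure just below $p$, and a value well below $M_0$ elsewhere, chosen so that (i) the threshold $M$ equals $M_0$ for both functions, and (ii) the oracle $\phi^*$ exhausts its budget on $S \cup J_1$ under $m_1$ and on $S \cup J_2$ under $m_2$. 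The slope $s$ and length $\ell = \Theta(N^{-1/3})$ are the free parameters to be tuned.

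Next I would verify $m_1, m_2 \in \mathcal{F}_{p,Q,L}$. The weak Lipschitz condition \eqref{eq:Lipshitz} holds because the only points with $m = M$ are the tent edges, around which $m$ is genuinely $s$-Lipschitz, so it suffices that $s \leq L$; away from these points the right-hand side of \eqref{eq:Lipshitz} is dominated by the $|M - m(x)|$ term. The margin condition \eqref{eq:margin} holds because $\{x : |m(x)-M| \leq \epsilon\}$ is, for small $\epsilon$, a union of a bounded number of segments of width $\epsilon/s$ coming from the tent flanks, of total measure $\lesssim \epsilon/s$; the hypothesis $Q > 6/L \lor 12$ is exactly what absorbs these contributions together with the background. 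This is also where the condition $N \geq C_L(p^{-3} \lor (1-p)^{-3})$ enters: it guarantees that two intervals of length $\ell = \Theta(N^{-1/3})$ fit inside the boundary layer, i.e.\ $\ell \lesssim p \land (1-p)$, so that $S$ and the bumps coexist.

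I would then reduce the regret to a single pull-count statistic and close with Pinsker. Writing $J_1^{\mathrm{core}}$ for the middle third of $J_1$, where $|m_j - M| \geq s\ell/4 = \Theta(N^{-1/3})$ under both functions, let $B = \{\phi \text{ pulls at least half of the } \Theta(N^{2/3}) \text{ arms of } J_1^{\mathrm{core}}\}$. On $B^c$ under $m_1$ (where $J_1^{\mathrm{core}}$ is optimal by a gap $\Theta(N^{-1/3})$) the strategy misses $\Theta(N^{2/3})$ optimal arms it can only replace by arms of value at most $M$, forcing $R_T^{\phi}(m_1) \geq 0.01\, T^{1/3}p^{-1/3}$; symmetrically, on $B$ under $m_2$ the strategy over-pulls $\Theta(N^{2/3})$ sub-optimal arms, forcing $R_T^{\phi}(m_2) \geq 0.01\, T^{1/3}p^{-1/3}$ (using $N = T/p$). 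Since each arm is pulled at most once and $m_1 = m_2$ off $J_1 \cup J_2$, the bandit divergence decomposition gives $\mathrm{KL}(\mathbb{P}_{m_1}\Vert \mathbb{P}_{m_2}) = \sum_{i} \mathbb{E}_{m_1}[\mathds{1}\{i \text{ pulled}\}]\,\mathrm{KL}(\mathrm{Bern}(m_1(a_i))\Vert \mathrm{Bern}(m_2(a_i))) \leq 2N\ell\cdot (s\ell)^2/(M(1-M))$, which is a constant that can be forced below $1.3$ by tuning $s$; Pinsker then yields $\mathrm{TV}(\mathbb{P}_{m_1},\mathbb{P}_{m_2}) \leq 0.8$, and Le Cam's inequality gives
\[
\sup_{m \in \{m_1,m_2\}} \mathbb{P}_m\big(R_T^{\phi}(m) \geq 0.01\, T^{1/3}p^{-1/3}\big) \geq \tfrac12\big(\mathbb{P}_{m_1}(B^c) + \mathbb{P}_{m_2}(B)\big) \geq \tfrac12\big(1 - \mathrm{TV}\big) \geq 0.1
\]
uniformly over $\phi$, which is the claim.

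I expect the main obstacle to be the joint tuning of the slope $s$ and length $\ell$ so that all three requirements hold simultaneously for every $L>0$: membership in $\mathcal{F}_{p,Q,L}$ (forcing $s \leq L$ and the margin constant below $Q$), a regret gap of at least $0.01\,T^{1/3}p^{-1/3}$ (pushing $s\ell$, hence $s$, up), and $\mathrm{KL}=O(1)$ (pushing $s$ down). For moderate $L$ the clean choice $\ell \sim N^{-1/3}$, $s \sim 1$ works, but for very small or very large $L$ one must trade length against slope (e.g.\ $\ell \sim (NL^2)^{-1/3}$), and verifying that the universal constants $0.01$ and $0.1$ survive this trade-off --- together with the bookkeeping ensuring the oracle budget is exactly exhausted on $S \cup J_j$, so that mislabeling $J_1^{\mathrm{core}}$ translates cleanly into regret --- is the part requiring the most care.
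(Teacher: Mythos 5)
There is a genuine gap, and it is precisely the step that constitutes the paper's entire proof of this Corollary. What you have written is, in outline, a re-derivation of Theorem \ref{thm:lower_bound}: your two-point construction (tent perturbations on two intervals of length $\Theta(N^{-1/3})$ straddling the threshold, membership check in $\mathcal{F}_{p,L,Q}$, KL bound of order $N\ell(s\ell)^2$, information inequality) mirrors the paper's proof of that theorem almost step for step, with Pinsker/Le Cam in place of Bretagnolle--Huber, and the hypothesis you invoke, $N \geq C_L(p^{-3}\lor(1-p)^{-3})$, is the \emph{theorem's} hypothesis. But your conclusion is the theorem's statement, $\mathbb{P}\left(R_T^\phi(m) \geq 0.01\,T^{1/3}p^{-1/3}\right) \geq 0.1$, not the Corollary's, and you never bridge the two. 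The Corollary is stated under the hypotheses $N > \exp(3C_L)$ and $\alpha \in (2/3 + C_L/\log N, 1]$, with the threshold $0.01\,T^{1/(3\alpha)}$; the missing translation is: (i) with $T = 0.5N^{\alpha}$ and $p = T/N = 0.5N^{\alpha-1}$ one has $T^{1/3}p^{-1/3} = N^{1/3} \geq 0.5^{1/(3\alpha)}N^{1/3} = T^{1/(3\alpha)}$, so the event in the theorem implies the event in the Corollary; and (ii) since $p \leq 1/2$ gives $(1-p)^{-3}\leq 8$ and $p^{-3} = 8N^{3(1-\alpha)}$, the theorem's condition $N \geq C_L^{\mathrm{thm}}(p^{-3}\lor(1-p)^{-3})$ reduces to $N^{3\alpha-2} \geq 8C_L^{\mathrm{thm}}$, which holds exactly when $\alpha > 2/3 + C_L/\log N$ for $C_L \geq \tfrac{1}{3}\log(8C_L^{\mathrm{thm}})$, the requirement $N > \exp(3C_L)$ serving to make this range of $\alpha$ non-empty. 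Without (i) and (ii) your argument proves a different statement under different hypotheses.

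Two further points. First, as a matter of economy: since the paper already has Theorem \ref{thm:lower_bound}, the Corollary should be proved by the substitution above rather than by repeating the two-point construction; re-proving the construction buys nothing here and multiplies the constant-tracking burden you yourself flag as the delicate part (tuning $s$ and $\ell$ against $L$, which the paper handles via $\tilde{L} = L\land 0.5$ and $\delta = \alpha(N\tilde{L}^2)^{-1/3}$). Second, a minor quantitative remark inside your argument: Pinsker with $\mathrm{KL} \leq 1.3$ gives $\mathrm{TV} \leq \sqrt{0.65} \approx 0.806 > 0.8$, so you need $\mathrm{KL} \leq 1.28$; this is harmless but illustrates why the constants must be pinned down, as the paper does with its explicit choice $\alpha = 0.23$ and the bound $\mathrm{KL} \leq 70.4\alpha^3$.
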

\section{Discussion} \label{sec:discussion}
We have introduced a new model for budget allocation with short supply when each action can be taken at most once, and side information is available on those actions. We have shown that, when covariates describing those actions are uniformly distributed in $[0,1]$, the expected reward function $m$ satisfies Assumption \ref{hyp:lip} and \ref{hyp:beta}, and the budget is proportional to the number of arms, then the optimal choice of number of intervals $K$ is of the order $T^{1/3}\log(T)^{-2/3}$, and the regret is $O(T^{1/3}\log(T)^{4/3})$. Our lower bound shows that this rate is sharp up to poly-logarithmic factors.

    Those results can readily be generalized to $d$-dimensionnal covariates. Assume that $m : [0,1]^d \rightarrow [0,1]$ is such that the weak Lipschitz assumption \ref{hyp:lip} holds for the euclidean distance, and that the margin assumption \ref{hyp:beta} is verified. Then, if $a_i \overset{i.i.d}{\sim} \mathcal{U}\left([0,1]^d\right)$, we can adapt UCBF by dividing the space $[0,1]^d$ into $K^d$ boxes of equal size. Looking more closely at our methods of proof, we note that the discretization error $R_T
^{(d)}$ remains of order $O(T/K
^2)$, while the cost of learning $R_T^{(FMAB)}$ is now bounded by $K^{d}\log(T)\log(K)$. Thus, the optimal number of intervals $K$ is of the order $T^{1/(d+2)}\log(T)^{-2/(d+2)}$, and the regret is of the order $O(T^{d/(d+2)}\log(T)^{4/(d+2)})$. We refer the interested reader to the Appendix, where precise statements of our hypotheses and results are provided, along with a description of the extension of Algorithm UCBF to multi-dimensional covariates.

\section*{Broader impact}
We present an algorithm for the problem of allocating a limited budget among competing candidates. This algorithm is easy to implement, and enjoys strong theoretical guarantees on its performance making it attractive and reliable in relevant applications. Nevertheless, we emphasise that the considered framework is based on the premise that the decision-maker is purely utility-driven. We leave it to the decision-maker to take additional domain-specific considerations into account.

\section*{Acknowledgement}
We would like to thank Christophe Giraud, Vianney Perchet and Gilles Stoltz for their stimulating suggestions and discussions.

\bibliographystyle{apalike}
\bibliography{refFinite}


\newpage
\appendix
\section*{Appendix}

Theorem \ref{thm:bound_high_prob} is proved Section \ref{subsec:TH1}, and Theorem \ref{thm:lower_bound} is proved Section \ref{subsect:TH2}. Section \ref{sec:highdim} is dedicated to stating and proving an upper bound on the regret of UCBF in higher dimension. Lemmas used in those Sections are proved in Section \ref{subsec:auxi}. First, let us state the following Lemma, which controls the fluctuations of $m$ within an interval.

\begin{lem}\label{lem:lipschitz}
Let $a \in [0,1]$ be such that $m(a) = M + \alpha L/K$  for some $\alpha >0$. Moreover, let $k$ be such that $a \in I_k$. Then $$\max_{a' \in I_k} m(a') \leq M + \left(\alpha + (\alpha \lor 1)\right)\frac{L}{K},$$ and $$\min_{a' \in I_k} m(a') \geq M + \left(\alpha - \frac{(\alpha \lor 2)}{2}\right)\frac{L}{K}.$$
Similarly, let $a \in [0,1]$ be such that $m(a) = M - \alpha \frac{L}{K}$, where $\alpha >0$. Moreover, let $k$ be such that $a \in I_k$. Then $$\min_{a' \in I_k} m(a') \geq M - \left(\alpha + (\alpha \lor 1)\right)\frac{L}{K},$$ and $$\max_{a' \in I_k} m(a') \leq M - \left(\alpha - \frac{(\alpha \lor 2)}{2}\right)\frac{L}{K}.$$
\end{lem}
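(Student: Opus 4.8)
The plan is to reduce all four inequalities to a single application of the weak Lipschitz condition (Assumption~\ref{hyp:lip}) combined with the elementary fact that any two points of $I_k$ lie at distance at most $1/K$. Fix an arbitrary $a' \in I_k$; since $a \in I_k$ as well, $|a-a'| \le 1/K$, so $L|a-a'| = \gamma L/K$ for some $\gamma \in [0,1]$. Writing $m(a') = M + \beta L/K$ with $\beta \in \mathbb{R}$, each claim becomes a bound on $\beta$ in terms of $\alpha$ and $\gamma$; since the resulting inequalities hold for \emph{every} $a' \in I_k$, they pass to the $\max$ and $\min$ over the interval.

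For the two outer bounds (the $\max$ in the first assertion and the $\min$ in the second) I would apply \eqref{eq:Lipshitz} in the direct orientation $x=a$, $y=a'$. Since $|M-m(a)| = \alpha L/K$, this gives at once
\[
|m(a') - m(a)| \le \max\{\alpha,\gamma\}\tfrac{L}{K} \le (\alpha\vee1)\tfrac{L}{K},
\]
whence $m(a') \le M + (\alpha + (\alpha\vee1))L/K$ in the first case and, symmetrically, $m(a') \ge M - (\alpha+(\alpha\vee1))L/K$ in the second. These are the easy halves and need nothing beyond the definition.

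The two inner bounds are sharper and require applying \eqref{eq:Lipshitz} in the \emph{reverse} orientation $x=a'$, $y=a$, so that the self-referential term $|M-m(a')| = |\beta|L/K$ enters the right-hand side. This produces the single constraint $|\alpha-\beta| \le \max\{|\beta|,\gamma\}$. Seeking a lower bound on $\beta$, I may assume $\beta < \alpha$ and take $\gamma = 1$ (its largest value). A short analysis of the piecewise-linear map $\beta \mapsto \max\{|\beta|,1\} + \beta - \alpha$ shows it is negative for $\beta \le -1$ (forcing $\alpha \le 0$, impossible) and strictly increasing on $[-1,\infty)$, so the feasible set is a half-line $[\beta^*,\infty)$; solving for its endpoint on each linear piece yields $\beta^* = \alpha - 1$ when $\alpha \le 2$ and $\beta^* = \alpha/2$ when $\alpha > 2$, that is $\beta^* = \alpha - (\alpha\vee2)/2$. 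This gives $\min_{a'\in I_k} m(a') \ge M + (\alpha - (\alpha\vee2)/2)L/K$, and the fourth inequality follows from the identical constraint, which is unchanged under the parametrization $m(a') = M - \beta L/K$.

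The main obstacle — and the only genuinely nontrivial point — is recognizing that the inner bounds cannot be extracted from the naive estimate used for the outer bounds: one must exploit the asymmetry of \eqref{eq:Lipshitz} by placing the point of unknown reward in the $x$-slot, and then solve the resulting constraint $|\alpha-\beta| \le \max\{|\beta|,\gamma\}$ by the brief case split above. Everything else is routine bookkeeping with the interval length $1/K$.
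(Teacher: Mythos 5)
Your proof is correct and follows essentially the same route as the paper's: the outer bounds come from applying the weak Lipschitz condition with the known point $a$ in the $x$-slot, and the inner bounds from the reverse orientation with $a'$ in the $x$-slot, after which your piecewise analysis of $\beta \mapsto \max\{|\beta|,1\} + \beta - \alpha$ is exactly the paper's case split on $|m(a')-M| \gtrless L/K$ in a cleaner parametrization. Your explicit dismissal of the branch $\beta \le -1$ is in fact slightly more careful than the paper, which implicitly assumes $m(a') \ge M$ in that case.
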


\section{Proof of Theorem \ref{thm:bound_high_prob}}
\label{subsec:TH1}

To prove Theorem \ref{thm:bound_high_prob}, we show that the regret $R_T$ can be decomposed as the sum of a discretization error term and of a term corresponding to the regret of pulling a game of finite bandit with $K$ arms. To do so, we introduce further notations.

Recall that for $k = 1, ..., K$, $m_k = K\int_{a \in I_k}m(a)da$ is the mean payment for pulling an arm uniformly in interval $I_k$. In order to avoid cumbersome notations for reordering the intervals, we assume henceforth (without loss of generality) that $\left\{m_k\right\}_{1\leq k \leq K}$ is a decreasing sequence. 

If we knew the sequence $\left\{m_k\right\}_{1\leq k \leq K}$ but not the reward of the arms $m(a_i)$, a reasonable strategy would be to pull all arms in the fraction $p$ of the best intervals, and no arm in the remaining intervals.  If all intervals contained the same number of arms $N/K$, we would pull all arms in the interval $I_1$, $I_2$, up to $I_f$, where $f = \lfloor pK\rfloor$, and we would pull the remaining arms randomly in $I_{f+1}$. Note however that since the arms are randomly distributed, the number of arms in each interval varies. Thus, a good strategy if we knew the sequence $\left\{m_k\right\}_{1\leq k \leq K}$ would consist in pulling all arms in the intervals $I_1$, $I_2$, up to $I_{\widehat{f}}$, where $\widehat{f}$ is such that $ N_{1} + .. +  N_{\widehat{f}}< T \leq N_{1} + .. +  N_{\widehat{f}+1}$, and pull the remaining arms in $I_{\hat{f}+1}$. We call this strategy "oracle strategy for the discrete problem", and we denote it $\phi^d$. Recall that we denote by $\phi^*(t)$ the arm pulled at time $t$ by the oracle strategy, and by $\phi(t)$ the arm pulled at time $t$ by UCBF.

We decompose $R_T$ as follows :
\begin{eqnarray*}
    R_T &=& \underset{t = 1.. T }{\sum} m(a_{\phi^*(t)}) - \underset{t = 1..T}{\sum} m(a_{\phi(t)})\\
     &=& \underset{t = 1.. T }{\sum} m(a_{\phi^*(t)}) - \underset{t = 1..T}{\sum} m(a_{\phi^d(t)}) + \underset{t = 1.. T }{\sum} m(a_{\phi^d(t)}) - \underset{t = 1..T}{\sum} m(a_{\phi(t)}).
\end{eqnarray*}

Let $R_T^{(d)} = \underset{t = 1.. T }{\sum} m(a_{\phi^*(t)}) - \underset{t = 1..T}{\sum} m(a_{\phi^d(t)})$. By definition, $R_T^{(d)}$ is the regret of the oracle stratgey for the discrete problem, and corresponds to a discretization error. We bound this term in Section \ref{subsubsec:discretise}. 

Let $R_T^{(FMAB)} = \underset{t = 1.. T }{\sum} m(a_{\phi^d(t)}) - \underset{t = 1..T}{\sum} m(a_{\phi(t)})$ be the regret of our strategy against the oracle strategy for the discrete problem. $R_T^{(FMAB)}$ corresponds to the regret of the corresponding finite $K$-armed bandit problem. A bound on this term is obtained in Section \ref{subsubsec:k_arm}.
\subsection{Bound on the discretization error $R_T^{(d)}$ and proof of Lemma \ref{lem:bound_Rd}}
\label{subsubsec:discretise}

To bound the discretization error $R_T^{(d)}$, we begin by controlling the deviation of $\widehat{f}$ and $m_{\hat{f}}$ from their theoretical counterparts $f$ and $M$.

\begin{lem}\label{lem:control_hat_f}
With probability at least $1 - 4e^{-\frac{2N}{K^2}}$, we have $\vert\widehat{f} - f \vert \leq 1$. On this event, $\left \vert m_{\hat{f}} - M \right \vert \leq 4L/K$ and $m_{\widehat{f}+1} \in [M - 8L/K, M + L/K]$.
\end{lem}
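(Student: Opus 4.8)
The plan is to separate the one stochastic claim from the two deterministic ones. The bound $|\widehat{f}-f|\le 1$ is the only statement that depends on the random arm positions (through the counts $N_k$); the two inequalities on $m_{\widehat{f}}$ and $m_{\widehat{f}+1}$ are deterministic facts about the nonrandom sorted means $m_k=K\int_{I_k}m$, which I would prove for every index $j$ in the window $\{f-1,f,f+1,f+2\}$ around $f=\lfloor pK\rfloor$ and then specialize, on the event of the first part, to $\widehat{f},\widehat{f}+1\in\{f-1,\dots,f+2\}$.

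For the index bound, write $S_j=N_1+\dots+N_j$ for the number of arms in the $j$ intervals of largest mean. These $j$ intervals form a set of Lebesgue measure $j/K$, so under Assumption \ref{hyp:uniforme} one has $S_j\sim\mathrm{Bin}(N,j/K)$ with $\mathbb{E}[S_j]=Nj/K$. Since $T=pN$ and $f=\lfloor pK\rfloor$, a short computation gives $\mathbb{E}[S_{f-1}]\le T-N/K$ and $\mathbb{E}[S_{f+2}]\ge T+N/K$, so $\{S_{f-1}\ge T\}$ and $\{S_{f+2}<T\}$ are deviations of at least $N/K$ from the respective means. Two-sided Hoeffding bounds each by $2e^{-2N/K^2}$, so with probability at least $1-4e^{-2N/K^2}$ we have $S_{f-1}<T\le S_{f+2}$; as $\widehat{f}$ is the largest $j$ with $S_j<T$, this forces $f-1\le\widehat{f}\le f+1$.

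It then remains to control the deterministic means $m_j$ for $j\in\{f-1,f,f+1,f+2\}$. The engine is a dictionary between the ordered counts $\#\{k:m_k\ge A\}$ and the super-level measures $G(A'):=\lambda(\{m\ge A'\})$, paid for by an $O(L/K)$ Lipschitz shift from Lemma \ref{lem:lipschitz}: on one side $m_k>M+4L/K$ forces $I_k\subseteq\{m>M+2L/K\}$, whence $\#\{k:m_k>M+4L/K\}\le K\,G(M+2L/K)$; on the other side any interval merely meeting $\{m\ge M-2L/K\}$ already has $m_k\ge M-4L/K$, whence $\#\{k:m_k\ge M-4L/K\}\ge K\,G(M-2L/K)$ — counting the boundary intervals on this favorable side is exactly what avoids an $O(1)$ loss. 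The definition of $M$ together with the margin condition (Assumption \ref{hyp:beta}, which removes any atom of $m$ at level $M$, so that $G(M)=\lambda(\{m>M\})=p$) then give $G(M+2L/K)<p$ and $G(M-2L/K)\ge p$.

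These inequalities are off by precisely the floor — $K\,G(M+2L/K)<pK<f+1$ yields only $\le f$, while I need $\le f-2$, and symmetrically below — and extracting the extra additive constant that beats $\lfloor pK\rfloor$ is the step I expect to be the most delicate. It comes from the geometry at a level-$M$ crossing: by weak Lipschitz, $m$ leaves $M$ at rate at most $L$ around any point where $m=M$, so on a strip of width $\sim 1/K$ just inside (resp. just outside) $\{m\ge M\}$ the function stays within $L/K$ of $M$. This ``ramp'' contributes a slab of measure of order $1/K$, upgrading the bounds to $K\,G(M+2L/K)\le pK-2<f-1$ and $K\,G(M-2L/K)\ge pK+2\ge f+2$, which now clear the floor. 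Since $\widehat{f}\ge f-1$ and $\widehat{f}+1\le f+2$, this gives $m_{\widehat{f}}\le m_{f-1}\le M+4L/K$ and $m_{\widehat{f}}\ge m_{f+1}\ge M-4L/K$, hence $|m_{\widehat{f}}-M|\le 4L/K$; running the same count at the thresholds $M+L/K$ and $M-8L/K$ (where the single ramp interval adjacent to a crossing, of mean at most $M+L/K$, supplies the needed gain) yields $m_{\widehat{f}+1}\le m_f\le M+L/K$ and $m_{\widehat{f}+1}\ge m_{f+2}\ge M-8L/K$. The hypotheses $K>p^{-1}\lor(1-p)^{-1}$ guarantee that $\{m\ge M\}$ and its complement are thick enough to house such strips, and chasing the exact min/max constants of Lemma \ref{lem:lipschitz} through the ramp count is what produces the asymmetric final constants.
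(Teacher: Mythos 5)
Your probabilistic half is exactly the paper's own argument: Hoeffding applied to the two counts $N_1+\cdots+N_{f-1}$ and $N_1+\cdots+N_{f+2}$, with $f=\lfloor TK/N\rfloor$ turning an $N/K$ deviation into the events $\{S_{f-1}\ge T\}$ and $\{S_{f+2}<T\}$; that part is correct. Your deterministic half genuinely departs from the paper. The paper proves only $m_f\in[M-L/K,\,M+L/K]$ by a containment count, and then reaches the indices $f-1$, $f+1$, $f+2$ by a pointwise boundary argument: by continuity, $\cup_{k>f}\overline{I_k}$ contains a limit point $\tilde a$ of $\cup_{k\le f}I_k$ with $m(\tilde a)\ge M-2L/K$, so the interval containing $\tilde a$ has mean $\ge M-4L/K$ by Lemma \ref{lem:lipschitz}, and iterating/mirroring gives $m_{f+2}\ge M-8L/K$ and $m_{f-1}\le M+4L/K$. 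You instead stay inside the counting framework and beat the floor with a ramp estimate. As stated your ramp inequalities are too strong: the lower ramp sits inside $\{m<M\}$, whose measure is $1-p$, so $\lambda(\{M-2L/K\le m<M\})\ge 2/K$ can only hold if $1-p\ge 2/K$, while the standing hypothesis $K>(1-p)^{-1}$ gives only $1-p>1/K$ (symmetrically for the upper ramp and $p$). This defect is repairable: when $1-p<2/K$, every component of $\{m<M\}$ lies entirely inside the ramp, so your $G(M-2L/K)$ equals $1$ and the count is $K\ge f+2$; likewise $G(M+2L/K)=0$ when $p<2/K$. With that case analysis, your route does deliver the $\pm4L/K$ and $-8L/K$ bounds.

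The genuine gap is at the threshold $M+L/K$. To conclude $m_{\widehat f+1}\le M+L/K$ in the worst case $\widehat f=f-1$ you need $m_f\le M+L/K$, i.e. $\#\{k:m_k>M+L/K\}\le f-1$; containment only gives $\le\lfloor pK\rfloor=f$, and your proposed gain (``the single ramp interval adjacent to a crossing, of mean at most $M+L/K$, supplies the needed gain'') does not produce it: that interval straddles the level-$M$ crossing, hence is \emph{not} contained in $\{m\ge M\}$ in the first place, so removing it from the count does not lower the measure bound $K\lambda(\{m\ge M\})=pK$, and you remain stuck at $f$. In fact the claim is false. Take $m(y)=\max(0.99-y,\,0)$, so $L=1$ and $Q=2$, with $p=0.49$ and $K=10$; then $M=1/2$, $f=4$, and the sorted interval means are $m_1=M+0.44$, $m_2=M+0.34$, $m_3=M+0.24$, $m_4=M+0.14$, $m_5=M+0.04$, so that $\#\{k:m_k>M+L/K\}=4=f$ and $m_f=M+1.4\,L/K$. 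On the event $\{\widehat f=f-1\}$, which is contained in $\{|\widehat f-f|\le 1\}$ and has positive probability, one then has $m_{\widehat f+1}=m_4>M+L/K$, so the third conclusion cannot be obtained in the form stated. You should know that the paper's own proof has exactly the same hole: it dismisses $m_f\le M+L/K$ with ``similar arguments'', but the mirrored count only yields $m_{f+1}\le M+L/K$ (floor and ceiling are not interchangeable), and the example above contradicts the stronger claim. So your attempt is no worse than the paper here, but a complete proof must either weaken that endpoint (your ramp count run at threshold $M+2L/K$ does give $m_f\le M+4L/K$, hence $m_{\widehat f+1}\le M+4L/K$), or show that $\widehat f=f-1$ cannot co-occur with such configurations.
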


Then, we define $\widehat{M} = m(a_{\phi*(T)})$ and control its deviation from $M$.

\begin{lem}\label{lem:control_hat_M}
Assume that $p \in (1/K, 1-1/K)$. Then,
with probability at least $1 - 2e^{-\frac{2N}{K^2}}$, we have $\vert \widehat{M} - M \vert \leq L/K$.
\end{lem}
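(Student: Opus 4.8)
The plan is to read $\widehat M = m(a_{\phi^*(T)})$ as the $T$-th largest order statistic of the sample $\{m(a_i)\}_{1\le i\le N}$ and to control it through the super-level counting function. Writing $G(A)=\lambda(\{x:m(x)\ge A\})$ and $S(A)=\sum_{i=1}^N\mathds{1}\{m(a_i)\ge A\}$, I note that under Assumption \ref{hyp:uniforme} the variable $S(A)$ is binomial with parameters $N$ and $G(A)$. The two deviation events decouple as $\{\widehat M\ge M+L/K\}\subseteq\{S(M+L/K)\ge T\}$ and $\{\widehat M<M-L/K\}\subseteq\{S(M-L/K)<T\}$, because the $T$-th largest reward exceeds a level $A$ exactly when at least $T$ of the sampled rewards do. I would bound each of these binomial tails by $e^{-2N/K^2}$ and then conclude by a union bound.

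The heart of the argument is to show that $G$ separates the two shifted thresholds from $p=T/N$ by a margin of at least $1/K$, namely $G(M+L/K)\le p-1/K$ and $G(M-L/K)\ge p+1/K$. This is exactly the gap required for Hoeffding's inequality: since $\mathbb{E}[S(M+L/K)]=N\,G(M+L/K)\le T-N/K$ and $\mathbb{E}[S(M-L/K)]=N\,G(M-L/K)\ge T+N/K$, each tail incurs a deviation of at least $N/K$ and is therefore bounded by $e^{-2(N/K)^2/N}=e^{-2N/K^2}$. Before producing the gap I would first record that the margin Assumption \ref{hyp:beta}, in force throughout the proof of Theorem \ref{thm:bound_high_prob}, forces $\lambda(\{m=M\})=0$, so that $G$ is continuous at $M$ and $G(M)=p$; the definition of $M$ already yields $G(A)\ge p$ for every $A<M$.

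To produce the gap I would use the weak Lipschitz Assumption \ref{hyp:lip}. For the upper threshold it suffices to show $\lambda(\{M\le m<M+L/K\})=G(M)-G(M+L/K)\ge 1/K$. If no point satisfies $m\ge M+L/K$, this measure equals $G(M)=p>1/K$ by hypothesis. Otherwise I pick $z$ with $m(z)\ge M+L/K$ and let $(c,d)$ be the connected component of $\{m>M\}$ containing $z$; since $m(c)=M$, applying Equation \eqref{eq:Lipshitz} at the crossing $c$ turns the weak Lipschitz bound into a genuine Lipschitz bound $m(y)\le M+L|y-c|$, so $m(z)\le M+L(z-c)$ forces $z-c\ge 1/K$, and on $(c,c+1/K)$ one has $M<m<M+L/K$, contributing measure $1/K$. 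The lower threshold is handled symmetrically, replacing $\{m>M\}$ by $\{m<M\}$ and using $1-p>1/K$ in the degenerate case.

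I expect this last geometric step to be the main obstacle: bounding the one-sided band $\{M\le m<M+L/K\}$ from below means isolating the contribution of a single crossing point from the component structure of the super-level set, whereas the weak Lipschitz condition only delivers slope control exactly at the points where $m=M$. Once the $1/K$ gap is established, the remaining pieces — the order-statistic reduction, the identity $G(M)=p$, and the two Hoeffding estimates — are routine, and the constant $2$ in the probability comes solely from the union bound over the two tails.
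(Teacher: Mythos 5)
Your proposal is correct and follows essentially the same route as the paper's proof: both reduce the deviation of $\widehat{M}$ to a binomial tail for the count of arms above a shifted threshold, apply Hoeffding's inequality with a mean gap of $N/K$, and obtain that gap by proving $\lambda\left(\left\{x : M \leq m(x) < M + L/K\right\}\right) \geq 1/K$ via the weak Lipschitz condition applied at a point where $m$ equals $M$ (the paper reaches such a crossing from an argmax of $m$ rather than from a connected component of $\{x : m(x) > M\}$, and likewise splits off the degenerate case using $p > 1/K$). The one imprecision is your claim that $m(c) = M$ at the left endpoint of the component containing $z$: this can fail when the component touches the boundary of $[0,1]$, but since $\lambda(\{x : m(x) > M\}) = p < 1$ the component cannot be all of $[0,1]$, so at least one of its two endpoints is a genuine crossing and your argument applies there verbatim.
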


We show later that with high probability, $\phi^*$ and $\phi^d$ may only differ on arms $i$ such that $m(a_i) \in [M- 16L/K, M+L/K]$. The following lemma controls the number of those arms.

\begin{lem}\label{lem:ech_arms} Assume that $K\leq N^{2/3}$. Then,
with probability at least $1 - 2e^{-\frac{N^{1/3}}{3}}$, 
$$\left \vert \left\{i : m(a_i) \in \left[M - \frac{16L}{K}, M+\frac{L}{K}\right]\right\}\right\vert \leq \frac{32LQ N}{K}.$$
\end{lem}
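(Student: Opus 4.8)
The plan is to recognise the left-hand side as a sum of i.i.d. indicator variables, control its mean with the margin condition, and finish with a standard Chernoff bound.

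First I would set $X_i = \mathds{1}\{m(a_i) \in [M - 16L/K, M + L/K]\}$ and $S = \sum_{i=1}^N X_i$, so that the quantity to bound is exactly $S$. Under Assumption \ref{hyp:uniforme} the $a_i$ are i.i.d. uniform on $[0,1]$, hence the $X_i$ are i.i.d. Bernoulli and $S \sim \mathrm{Bin}(N, p_0)$ with
$$p_0 = \lambda\big(\{x : m(x) \in [M - 16L/K,\, M + L/K]\}\big).$$
Since $[M - 16L/K, M + L/K] \subseteq \{x : |M - m(x)| \leq 16L/K\}$, the margin condition (Assumption \ref{hyp:beta}) gives $p_0 \leq Q\cdot 16L/K = 16LQ/K$, and therefore $\mathbb{E}[S] = N p_0 \leq \bar\mu := 16LQN/K$, i.e. exactly half of the target threshold $32LQN/K$.

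Next I would apply a multiplicative Chernoff bound to the upper tail. If $32LQN/K > N$ the event $\{S \geq 32LQN/K\}$ is impossible and there is nothing to prove, so I may assume $16LQ/K \leq 1/2$. Then, using that the binomial upper-tail probability $\mathbb{P}(\mathrm{Bin}(N,p) \geq a)$ is nondecreasing in $p$, the tail of $S \sim \mathrm{Bin}(N,p_0)$ is dominated by that of a $\mathrm{Bin}(N, 16LQ/K)$ variable of mean exactly $\bar\mu$, and $\mathbb{P}(Z \geq 2\mathbb{E}[Z]) \leq e^{-\mathbb{E}[Z]/3}$ yields
$$\mathbb{P}\!\left(S \geq \tfrac{32LQN}{K}\right) = \mathbb{P}(S \geq 2\bar\mu) \leq e^{-\bar\mu/3}.$$
The point worth care here is that concentration is measured against the margin-based upper bound $\bar\mu$ rather than the (possibly much smaller) true mean $Np_0$; the reduction via tail-monotonicity is precisely what makes this legitimate.

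Finally I would convert $e^{-\bar\mu/3}$ into the stated probability. Because $K \leq N^{2/3}$ we have $\bar\mu = 16LQN/K \geq 16LQN^{1/3}$, and since the simultaneous validity of Assumptions \ref{hyp:lip} and \ref{hyp:beta} forces $LQ \geq 1$, this gives $\bar\mu \geq 16 N^{1/3}$ and hence $e^{-\bar\mu/3} \leq e^{-16N^{1/3}/3} \leq e^{-N^{1/3}/3}$, which is comfortably stronger than the claimed bound $2e^{-N^{1/3}/3}$. The lemma is essentially a routine Bernoulli concentration estimate; the only non-mechanical steps are the margin-based bound on $p_0$, measuring the deviation against $\bar\mu$, and isolating the trivial regime $16LQ \geq K$.
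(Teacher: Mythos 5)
Your proof is correct, and it reaches the bound by a genuinely different concentration argument than the paper's. The paper first proves an auxiliary result (Lemma \ref{lem:arms_in_mu}) via Bernstein's inequality, controlling the deviation of the count around its \emph{true} mean $\lambda(\mathcal{B})N$ by $\sqrt{\lambda(\mathcal{B})N^{4/3}}$, and then uses the margin condition together with $QL\geq 1$ and $K\leq N^{2/3}$ to bound the mean and the deviation term by $16LQN/K$ each. You instead dominate the count by a $\mathrm{Bin}(N,16LQ/K)$ variable through tail monotonicity and apply a multiplicative Chernoff bound at twice its mean. The skeleton is the same in both cases (the margin condition yields half the target as a bound on the mean, and concentration supplies the factor $2$), but your domination step buys something real: it needs no lower bound on $\lambda(\mathcal{B})$. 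The paper's auxiliary lemma formally assumes $\lambda(\mathcal{B})\geq N^{-2/3}$ --- this is exactly what makes its Bernstein exponent at least $N^{1/3}/3$ --- yet that hypothesis is never verified when the lemma is invoked for the near-threshold set in the proof of Lemma \ref{lem:ech_arms}; justifying it would require an extra argument in the spirit of the proof of Lemma \ref{lem:control_hat_M}, under additional conditions on $p$. Your route makes the question moot, at the modest cost of isolating the trivial regime $32LQ/K>1$ so that the dominating parameter is a valid probability, and it even yields the stronger tail $e^{-16N^{1/3}/3}$ in place of $2e^{-N^{1/3}/3}$. One caveat you share with the paper: applying Assumption \ref{hyp:beta} with $\epsilon=16L/K$ tacitly assumes $16L/K<1$, which holds for the values of $K$ used downstream but is not stated in the lemma.
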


Using Lemmas \ref{lem:control_hat_f}-\ref{lem:ech_arms}, we control the discretization cost $R_T^{(d)}$ on the following event. Let
\begin{eqnarray*}\label{lem:controle_R1}
    \mathcal{E}_{a} & =& \left\{ \vert\widehat{f} - f \vert \leq 1\right\}\cap \left\{\vert \widehat{M} - M \vert \leq L/K \right\} \\
    &\cap& \left\{ \left \vert \left\{i : m(a_i) \in \left[M - \frac{16L}{K}, M+\frac{L}{K}\right]\right\}\right\vert \leq \frac{32LQ N}{K}\right\}.
\end{eqnarray*}

Note that under the assumptions of Lemmas \ref{lem:control_hat_M}-\ref{lem:ech_arms},  $\mathbb{P}\left(\mathcal{E}_a\right) \geq 1 - 6e^{-\frac{2N}{K^2}} - 2e^{-\frac{N^{1/3}}{3}}$ by Lemma \ref{lem:control_hat_f}-\ref{lem:ech_arms}. Moreover on $\mathcal{E}_a$, $\vert m_{\hat{f}}-M\vert \leq 4L/K$ and $m_{\widehat{f}+1} \in [M - 8L/K, M + L/K]$.

\begin{lem}\label{lem:E_a}
On the event $\mathcal{E}_{a}$, $R_T^{(d)} \leq \frac{384QL^2N}{K^2}$.
\end{lem}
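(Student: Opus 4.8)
The plan is to rewrite $R_T^{(d)}$ in terms of the symmetric difference of the two sets of arms pulled, and then control both its size and the per-arm gap. Write $A^\star = \{a_{\phi^*(t)} : 1 \le t \le T\}$ and $A^d = \{a_{\phi^d(t)} : 1 \le t \le T\}$ for the arms selected by the oracle and the discrete oracle. Both pull exactly $T$ distinct arms, so $|A^\star| = |A^d| = T$, hence $|A^\star \setminus A^d| = |A^d \setminus A^\star| =: D$; the arms common to both cancel, and adding and subtracting $M$ (using that the two excess sets have equal cardinality) gives
\[
  R_T^{(d)} = \sum_{a \in A^\star \setminus A^d} \big(m(a) - M\big) + \sum_{a \in A^d \setminus A^\star} \big(M - m(a)\big).
\]
It therefore suffices to (i) show every arm in $A^\star \triangle A^d$ has reward within $O(L/K)$ of $M$, and (ii) bound the count $D$.

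For (i), I would use that $\phi^\star$ pulls exactly the arms with $m(a_i) \ge \widehat M$, while $\phi^d$ pulls every arm of $I_1,\dots,I_{\widehat f}$ and fills the remaining budget from $I_{\widehat f + 1}$. Thus an arm of $A^\star \setminus A^d$ lies in some $I_k$ with $k \ge \widehat f + 1$ and has $m(a_i) \ge \widehat M$, whereas an arm of $A^d \setminus A^\star$ lies in some $I_k$ with $k \le \widehat f + 1$ and has $m(a_i) \le \widehat M$. On $\mathcal{E}_a$ we have $|\widehat M - M| \le L/K$, $|m_{\widehat f} - M| \le 4L/K$ and $m_{\widehat f + 1} \in [M - 8L/K, M + L/K]$. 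Since $m_k$ is the average of $m$ over $I_k$, each such interval contains a point whose reward equals $m_k$, so Lemma \ref{lem:lipschitz} converts these interval-average bounds into pointwise bounds on $\min_{I_k} m$ and $\max_{I_k} m$. Carrying this out, using $m_k \ge m_{\widehat f}$ for $k \le \widehat f$ and $m_k \le m_{\widehat f + 1}$ for $k \ge \widehat f + 1$, shows every arm of $A^\star \triangle A^d$ satisfies $m(a_i) \in [M - 16L/K,\, M + L/K]$ up to the constants. The subtlety is the boundary interval $I_{\widehat f + 1}$, which $\phi^d$ pulls only partially: an arm there may land in either excess set, so both $\max_{I_{\widehat f+1}} m$ and $\min_{I_{\widehat f+1}} m$ must be controlled, which is precisely why Lemma \ref{lem:control_hat_f} records two-sided bounds on $m_{\widehat f + 1}$.

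For (ii), the inclusion just established gives $A^d \setminus A^\star \subseteq \{i : m(a_i) \in [M - 16L/K, M + L/K]\}$, so on $\mathcal{E}_a$ Lemma \ref{lem:ech_arms} yields $D = |A^d \setminus A^\star| \le 32 LQN/K$. Feeding the per-arm gap bound $|m(a_i) - M| = O(L/K)$ from step (i) into the displayed identity then gives $R_T^{(d)} \le D \cdot O(L/K) = O(QL^2 N / K^2)$, and tracking the explicit constants through the case analysis produces the stated bound $384\,QL^2 N / K^2$.

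I expect the characterization in step (i) to be the main obstacle: the interval averages $m_k$ do not by themselves bound individual arm rewards, so the argument hinges on correctly feeding the two-sided control of $m_{\widehat f}$, $m_{\widehat f + 1}$ and $\widehat M$ into Lemma \ref{lem:lipschitz}, and on handling the partially-exhausted interval $I_{\widehat f + 1}$, whose arms can fall on either side of the threshold. The counting in step (ii) and the final summation are then routine.
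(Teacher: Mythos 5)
You take essentially the same route as the paper: both arguments identify the non-zero contribution to $R_T^{(d)}$ with the symmetric difference of the two selection sets, use $\mathcal{E}_a$ together with Lemma \ref{lem:lipschitz} to place the rewards of those arms within $O(L/K)$ of $M$, count them with Lemma \ref{lem:ech_arms}, and multiply. Since $|A^\star\setminus A^d|=|A^d\setminus A^\star|$, your decomposition through $M$ and the paper's pairing of the two excess sets are the same sum written two ways, and the substance of your steps (i) and (ii) --- including the two-sided control of $m_{\widehat{f}+1}$ and the special role of the partially exhausted interval $I_{\widehat{f}+1}$ --- matches the paper's proof.

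The one concrete shortfall is the constant $384$. Bounding each deviation by its own worst case, as your plan does, gives $M-m(a_i)\le 16L/K$ on $A^d\setminus A^\star$ (attained when $m_{\widehat{f}+1}=M-8L/K$) and $m(a_j)-M\le 2L/K$ on $A^\star\setminus A^d$ (attained when $m_{\widehat{f}+1}=M+L/K$), i.e.\ $18L/K$ per matched pair, hence $18\cdot 32\,QL^2N/K^2=576\,QL^2N/K^2$, not $384$. These two worst cases occur for incompatible values of $m_{\widehat{f}+1}$, and the paper exploits precisely this: it runs a joint case analysis on the sign of $m_{\widehat{f}+1}-M$, in which (when $m_{\widehat{f}+1}<M$) the upper bound on $m(a_j)$ drops below $M$ in step with the degradation of the lower bound on $m(a_i)$, so the pair sums partially cancel and one gets $m(a_j)-m(a_i)\le 12L/K$ per pair, i.e.\ $12\cdot 32=384$. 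So ``tracking the explicit constants'' in your per-arm form does not produce the stated bound; you either need this coupled case analysis, or you prove the lemma with $576$ in place of $384$ --- which is still $O(QL^2N/K^2)$ and suffices for Theorem \ref{thm:bound_high_prob}, but is not the statement as given.
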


Lemma \ref{lem:bound_Rd} follows from Lemma \ref{lem:control_hat_f}, Lemma \ref{lem:control_hat_M}, Lemma \ref{lem:ech_arms}  and Lemma \ref{lem:E_a}.

\subsection{Bound on the regret of the discrete problem $R_T^{(FMAB)}$}
\label{subsubsec:k_arm}

We bound $R_T^{(FMAB)}$ on a favourable event, on which both the number of arms in each interval and the payment obtained by pulling those arms do not deviate too much from their expected value. Under Assumption \ref{hyp:uniforme}, $\mathbb{E}\left[N_k\right] = N/K$ for all $k  = 1, ..., K$. The following Lemmas provides a high probability bound on $\underset{k = 1, ..., K}{\max} \vert N_k - N/K \vert$.

\begin{lem}\label{lem:equi_nk}
Assume that $K \leq N^{2/3}/4$. Then, 
\begin{eqnarray*}
\mathbb{P}\left(\underset{k \in \{1,..,K\}}{\max} \left \vert N_k - \frac{N}{K} \right \vert \geq \frac{N}{2K}\right) \leq   2Ke^{-\frac{N^{1/3}}{3}}.
\end{eqnarray*}
\end{lem}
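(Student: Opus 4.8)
The plan is to treat each count $N_k$ in isolation with a multiplicative Chernoff bound and then take a union bound over the $K$ intervals. Under Assumption \ref{hyp:uniforme} the indicators $\mathds{1}\{a_i \in I_k\}$, $i = 1, \dots, N$, are i.i.d.\ Bernoulli variables, and since each interval $I_k$ has Lebesgue measure $1/K$, the count $N_k = \sum_{i=1}^N \mathds{1}\{a_i \in I_k\}$ is Binomial with parameters $(N, 1/K)$. In particular $\mu := \mathbb{E}[N_k] = N/K$, so the event we must control is exactly a relative deviation of size $1/2$ about this mean.

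First I would invoke the two-sided multiplicative Chernoff bound for a Binomial: for $0 < \delta \leq 1$,
\[
\mathbb{P}\bigl(N_k \geq (1+\delta)\mu\bigr) \leq e^{-\mu\delta^2/3}, \qquad \mathbb{P}\bigl(N_k \leq (1-\delta)\mu\bigr) \leq e^{-\mu\delta^2/2}.
\]
Taking $\delta = 1/2$ and summing the two tails gives
\[
\mathbb{P}\left(\left\vert N_k - \frac{N}{K}\right\vert \geq \frac{N}{2K}\right) \leq e^{-\mu/12} + e^{-\mu/8} \leq 2 e^{-\mu/12}.
\]

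Next I would use the hypothesis $K \leq N^{2/3}/4$ to lower bound the mean: $\mu = N/K \geq 4 N^{1/3}$, whence $\mu/12 \geq N^{1/3}/3$ and therefore $2e^{-\mu/12} \leq 2 e^{-N^{1/3}/3}$ uniformly in $k$. A union bound over $k \in \{1, \dots, K\}$ then yields
\[
\mathbb{P}\left(\max_{k \in \{1,\dots,K\}}\left\vert N_k - \frac{N}{K}\right\vert \geq \frac{N}{2K}\right) \leq 2K e^{-N^{1/3}/3},
\]
which is the claimed bound.

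The argument is essentially routine; the only point requiring care is matching the numerical constants so that the Chernoff exponent $\mu/12$ converts cleanly into $N^{1/3}/3$. This is precisely what the factor $1/4$ in the assumption $K \leq N^{2/3}/4$ is engineered to deliver, and choosing the relative deviation $\delta = 1/2$ (rather than a smaller value) keeps the exponent large enough that the crude union-bound factor $K$ is comfortably absorbed. No step presents a genuine obstacle beyond this constant-chasing.
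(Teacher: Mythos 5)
Your proof is correct: the per-interval tail bound is valid (the multiplicative Chernoff bounds you quote are standard for a Binomial$(N,1/K)$ variable), the constant-matching $\mu/12 \geq 4N^{1/3}/12 = N^{1/3}/3$ is exactly right, and the union bound finishes the claim. The route differs from the paper's only in which concentration inequality carries the load. The paper does not argue directly about $N_k$; it first proves an auxiliary result (its Lemma~\ref{lem:arms_in_mu}) via Bernstein's inequality, stating that for \emph{any} Borel set $\mathcal{B}$ with $\lambda(\mathcal{B}) \geq N^{-2/3}$ the count $N_{\mathcal{B}}$ deviates from $\lambda(\mathcal{B})N$ by at most $\sqrt{\lambda(\mathcal{B})N^{4/3}}$ with probability $1-2e^{-N^{1/3}/3}$, and then specializes to $\mathcal{B} = I_k$, checking that $N^{2/3}/\sqrt{K} \leq N/(2K)$ precisely when $K \leq N^{2/3}/4$. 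That detour buys reusability: the same auxiliary lemma is invoked again to prove Lemma~\ref{lem:ech_arms}, where the relevant Borel set is a level set of $m$ rather than an interval, and a purely multiplicative Chernoff statement about Binomial$(N,1/K)$ counts would not apply verbatim there. Your direct argument is more self-contained and makes the role of the factor $1/4$ in the hypothesis more transparent, at the cost of not producing a statement that generalizes beyond the intervals $I_k$.
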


Now, we show that on an event of large probability, for $k = 1, ..., K$ and $s \leq (N_k \land T)$, $\widehat{m}_k(s)$ does not deviate of $m_k$ by more that $\sqrt{\log(T/\delta)/2s}$. 

Let $k \in \{1,...,K\}$ be such that $N_k > 0$. For $s \leq n_k(T)$, we denote by $\pi_k(s)$ the $s$-th armed pulled in interval $I_k$ by UCBF. With these notations, for all $s = 1, ..., n_k(T)$, $\widehat{m}_k(s)$ is defined by UCBF as $\widehat{m}_k(s) = \frac{1}{s}\underset{t = i = 1, ..., s}{\sum} y_{\pi_k(i)}$. We define similarly $\pi_k(s)$ for $s \in [n_k(T)+1,N_k]$ by selecting uniformly at random without replacement the remaining arms in $I_k$, and let $\widehat{m}_k(s) = \frac{1}{s}\underset{t = i = 1, ..., s}{\sum} y_{\pi_k(i)}$ for $s = n_k(T) + 1, ..., N_k$, and $\widehat{m}_k(0) = 0$.

\begin{lem}\label{lem:borne_m} 
\begin{eqnarray*}
\mathbb{P}\left( \exists k \in \{1,...,K\}, s \leq (N_k\land T) : \left\vert \widehat{m}_k(s) - m_k \right\vert \geq \sqrt{\frac{\log(T/\delta)}{2s}}\right) &\leq& 2K\delta.
\end{eqnarray*}
\end{lem}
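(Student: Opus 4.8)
The plan is to prove the bound by a two-level union bound: apply Hoeffding's inequality to each pair $(k,s)$, then sum the failure probabilities over the $K$ intervals and the at most $T$ admissible sample sizes. For a fixed pair, Hoeffding's inequality for $[0,1]$-valued averages gives a deviation probability of $2\exp(-2s r_s^2)$ with $r_s = \sqrt{\log(T/\delta)/(2s)}$, which equals exactly $2\delta/T$ since $2s r_s^2 = \log(T/\delta)$. Because $s$ ranges only over $\{1,\dots,N_k\land T\}$ with $N_k\land T\le T$, summing over $s$ contributes a factor $T$ that cancels the $1/T$, giving $2\delta$ per interval, and summing over $k=1,\dots,K$ yields the claimed $2K\delta$. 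The only real content is to justify that each $\widehat m_k(s)$ concentrates like the empirical mean of $s$ i.i.d. samples of mean $m_k$, despite the facts that the pull schedule is adaptive, arms are drawn without replacement, and the arm locations are themselves random.

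To carry this out I would first condition on $N_k=n$ for $n\ge s$. Under Assumption~\ref{hyp:uniforme}, conditionally on $N_k=n$ the $n$ arms in $I_k$ have i.i.d. locations uniform on $I_k$, and attaching the independent rewards makes the pairs $(a_i,y_i)_{a_i\in I_k}$ i.i.d.; in particular the rewards $\{y_i : a_i\in I_k\}$ are i.i.d., bounded in $[0,1]$, with mean
\[
\mathbb{E}\!\left[y_i \mid a_i\in I_k\right] = \mathbb{E}\!\left[m(a_i)\mid a_i\in I_k\right] = \frac{\int_{I_k}m(a)\,da}{\lambda(I_k)} = K\!\int_{I_k}m(a)\,da = m_k .
\]
Next I would use that UCBF selects the arm to pull inside a chosen interval uniformly at random, and that the extension to $s>n_k(T)$ likewise samples the remaining arms uniformly at random. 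Hence the order $\pi_k(1),\dots,\pi_k(N_k)$ in which the arms of $I_k$ are pulled is a uniform random permutation generated by the algorithm's internal randomness, independent of the reward values $\{y_i: a_i\in I_k\}$. By exchangeability, the sequence $y_{\pi_k(1)},\dots,y_{\pi_k(N_k)}$ then has the same joint law as the original i.i.d. sequence, so for each fixed $s\le n$ the average $\widehat m_k(s)=\tfrac1s\sum_{i=1}^s y_{\pi_k(i)}$ is distributed exactly as the mean of $s$ i.i.d. $[0,1]$-valued rewards of mean $m_k$.

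The point that renders the adaptivity harmless is that we only bound a fixed-$s$ deviation event conditionally on $N_k$: although the \emph{timing} of the pulls from $I_k$ (and thus $n_k(T)$) depends on rewards from all intervals through the UCB indices, the within-interval order is independent of the values, so no cross-interval martingale argument is needed. Applying Hoeffding conditionally on $N_k=n$ gives $\mathbb{P}(|\widehat m_k(s)-m_k|\ge r_s \mid N_k=n)\le 2\delta/T$ uniformly in $n\ge s$; averaging over $N_k$ and discarding the event $\{N_k<s\}$ yields $\mathbb{P}(s\le N_k,\ |\widehat m_k(s)-m_k|\ge r_s)\le 2\delta/T$. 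Summing over $s\le N_k\land T\le T$ and over $k\le K$ then completes the proof. I expect the main obstacle to be stating the i.i.d./exchangeability reduction cleanly — in particular arguing that the uniform within-interval selection decouples the reward values from the adaptive pull schedule — rather than the concentration step itself, which is a direct application of Hoeffding's inequality.
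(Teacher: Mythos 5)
Your proposal is correct and follows essentially the same route as the paper's proof: condition on $N_k=n$, observe that the arms pulled within $I_k$ are (conditionally) i.i.d.\ uniform on $I_k$ so their rewards are i.i.d.\ with mean $m_k$, apply Hoeffding to get $2\delta/T$ per fixed $(k,s)$, and finish with union bounds over $s\leq N_k\land T$ and $k\leq K$. The only differences are cosmetic — you average over $N_k$ before the union bound over $s$ whereas the paper does it after, and your exchangeability argument spells out more explicitly the decoupling of the within-interval pull order from the adaptive schedule, which the paper asserts more tersely.
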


Then, we define
\begin{eqnarray*}
\mathcal{E}_b = && \left\{\underset{k = 1..K}{\cap} \left\{N_k \in \left[\frac{N}{2K}, \frac{3N}{2K}\right]\right\}\right\}\\
&& \cap \left\{\underset{k = 1 .. K}{\cap} \underset{s = 1..(N_k\land T)}{\cap} \left\{ \vert m_k - \widehat{m}_k(s)\vert \leq \sqrt{\frac{\log(T/\delta)}{2s}} \right\}\right\}.
\end{eqnarray*}
Combining Lemma \ref{lem:equi_nk} and Lemma \ref{lem:borne_m}, we find that when $K \leq N^{2/3}/4$, $$\mathbb{P}\left( \mathcal{E}_b\right) \geq 1 - 2Ke^{-\frac{N^{1/3}}{2}} - 2K\delta.$$

Now, we decompose $R_T^{(FMAB)}$ in the following way. Recall that
\begin{align}
R_T^{(FMAB)} =& \underset{t = 1...T}{\sum} m(a_{\phi^d(t)}) - \underset{t = 1...T}{\sum} m(a_{\phi(t)}) \nonumber .
\end{align}
Recall that $\phi^d$ pulls all arms in the interval $I_1$, ..., $I_{f}$, and pull the remaining arms in the interval $I_{f+1}$. In the following, we denote $\Phi^{d}(T)$ the set of arm pulled by $\phi^d$ at time $T$. Thus, 
\begin{align}
\underset{t = 1...T}{\sum} m(a_{\phi^d(t)}) =& \underset{k = 1..\widehat{f}\ }{\sum} \underset{\ a_i \in I_k}{\sum} m(a_i) + \underset{a_i \in \Phi^d(T)\cap I_{\hat{f}+1}}{\sum} m(a_{i}) \nonumber.
\end{align}
The number of arms pulled by $\phi^d$ is equal to $T$, and we can write
\begin{align}
\underset{t = 1...T}{\sum} m(a_{\phi^d(t)}) =& \underset{k = 1..\widehat{f}\ }{\sum} \underset{\ a_i \in I_k}{\sum} \left( m(a_i) - M\right) + \underset{a_i \in \Phi^d(T)\cap I_{\hat{f}+1}}{\sum} \left( m(a_{i}) - M\right) + TM \label{eq:decPhi}.
\end{align}
On the other hand, we decompose the total payment obtained by $\phi$ as the sum of the payment obtained by pulling arms also selected by $\phi^d$ (i.e. arms in $I_1$, ..., $I_f$ and $I_{f+1} \cap \Phi^d(T)$), and the sum of payment for pulling arms that were not selected by $\phi^d$ (i.e. arms in $I_{f+1} \cap \overline{\Phi^d(T)}$ and in $I_{f+2}$, ..., $I_k$). Recall that $\Phi(T)$ is the set of arms pulled by UCBF at time $T$.
\begin{align}
\underset{t = 1...T}{\sum} m(a_{\phi(t)}) =&\underset{k = 1.. \widehat{f}\ }{\sum}\ \underset{a_i \in I_k \cap \Phi(T)}{\sum} m(a_i) + \underset{a_{i} \in I_{\hat{f}+1} \cap \Phi(T) \cap  \Phi^d(T)}{\sum} m(a_i) \nonumber\\
 & - \left( \underset{a_i \in I_{\hat{f}+1}\cap \Phi(T) \cap \overline{\Phi^d(T)}}{\sum} - m(a_{\phi(t)})\right) -  \left(\underset{k = \widehat{f}+2 .. K\ }{\sum}\underset{a_i \in I_k \cap \Phi(T)}{\sum} - m(a_{\phi^d(t)})\right)\nonumber.
\end{align}

Again, $T$ arms are pulled by $\phi$, and we can write
\begin{align}
\underset{t = 1...T}{\sum} m(a_{\phi(t)}) =&\underset{k = 1.. \widehat{f}\ }{\sum}\ \underset{a_i \in I_k \cap \Phi(T)}{\sum} (m(a_i)-M) + \underset{a_{i} \in I_{\hat{f}+1} \cap \Phi(T) \cap  \Phi^d(T)}{\sum} (m(a_i)-M) \nonumber\\
 &- \underset{a_i \in I_{\hat{f}+1}\cap \Phi(T) \cap \overline{\Phi^d(T)}}{\sum} (M- m(a_i)) -  \underset{k = \widehat{f}+2 .. K\ }{\sum}\underset{a_i \in I_k \cap \Phi(T)}{\sum} (M- m(a_i)) + TM\label{eq:decPhid}.
\end{align}

Subtracting equation \eqref{eq:decPhid} from equation  \eqref{eq:decPhi}, we find that

\begin{align}
R_T^{(FMAB)} =& \underset{k = 1..\widehat{f}\ }{\sum} \underset{\ a_i \in I_k \cap \overline{\Phi(T)}}{\sum} \left(m(a_{i}) - M\right)  + \underset{\ a_i \in I_{\hat{f}+1} \cap \Phi^d(T) \cap \overline{\Phi(T)}}{\sum} \left(m(a_{i}) - M\right) \nonumber \\
&+ \underset{\ a_i \in I_{\hat{f}+1} \cap \Phi(T) \cap \overline{\Phi^d(T)}}{\sum} \left(M-m(a_{i}) \right)+\underset{k = \widehat{f}+2 .. K\ }{\sum}\underset{a_i \in I_k\cap \Phi(T)}{\sum} \left( M - m(a_i)\right).\nonumber
\end{align}

We write $$R_{\hat{f}+1} = \underset{\ a_i \in I_{\hat{f}+1} \cap \Phi^d(T) \cap \overline{\Phi(T)}}{\sum} \left(m(a_{i}) - M\right) \nonumber  +\underset{\ a_i \in I_{\hat{f}+1} \cap \Phi(T) \cap \overline{\Phi^d(T)}}{\sum} \left(M-m(a_{i}) \right),$$
$$R_{opt} = \underset{k = 1..\widehat{f}}{\sum}\  \underset{a_i \in I_k \cap \overline{\Phi(T)}}{\sum} \left(m(a_{i}) - M\right),$$
and $$R_{subopt} = \underset{k = \widehat{f}+2 .. K}{\sum}\ \underset{a_i \in I_k \cap \Phi(T)}{\sum} \left(M  - m(a_i)\right).$$

The decomposition $R_T^{(FMAB)} = R_{opt} + R_{\hat{f}+1} + R_{subopt}$ show that three phenomenons contribute to the regret of $\phi$ on the discrete problem. The side effect term $R_{\hat{f}+1}$ can easily be bounded : there are most $1.5N/K$ arms in $I_{\hat{f}+1}$, and so there are at most $1.5N/K$ terms in $R_{\hat{f}+1}$. On the event $\mathcal{E}_a$, $m_{\hat{f}+1} \in [M-8L/K, M+L/K]$. Using Lemma \ref{lem:lipschitz}, we see that for each arm $a_i \in I_{\hat{f}+1}$, $\vert m(a_i) - M\vert\leq 16L/K$. Thus, on $\mathcal{E}_a \cap \mathcal{E}_b$, $R_{\hat{f}+1} \leq 24N/K^2$.

Now, we say that an interval $I_k$ is sub-optimal if $m_k < m_{\hat{f}+1}$ and is optimal if $m_k \geq m_{\hat{f}}$. $R_T^{(FMAB)} - R_{\hat{f}+1}$ is the sum of a term $R_{opt}$, induced by the remaining arms in the optimal intervals, and a term $R_{subopt}$, induced by pulls of arms in sub-optimal intervals. The following Lemma will be used to control those terms.

For two intervals $I_k$, $I_l$ such that $m_k > m_l$, we provide a bound on the number of arms drawn in $I_l$ given that there are still arms available in the better interval $I_k$. For two intervals $k,l \in \{1,...,K\}^2$, we  denote henceforth $\Delta_{k,l} = m_k - m_l$.

\begin{lem}\label{lem:bound_nl}
Let $k\in \{1,...,K\}$. On the event $\mathcal{E}_b \cap \{n_k(T) < N_k \}$, a.s. for all intervals $I_l$ such that $\Delta_{k,l} > 0$, $n_l(T) \leq \frac{3\log(T/\delta)}{\Delta_{k,l}^2}$.
\end{lem}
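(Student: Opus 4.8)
The plan is to run the standard optimism-in-the-face-of-uncertainty argument for UCB, exploiting the hypothesis $n_k(T) < N_k$ to guarantee that the reference interval $I_k$ is never exhausted and therefore always available as a competitor whenever an arm of $I_l$ is drawn. First I would observe that since $I_k$ keeps at least $N_k - n_k(T) > 0$ unpulled arms throughout the horizon, it belongs to the set of alive intervals at every round $t \le T$; moreover, after the initialisation phase each alive interval has been pulled once, so $n_k(t-1) \ge 1$ and the empirical mean and confidence radius of $I_k$ are well defined at every round of the main loop.

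Next I would isolate the last round $t$ at which UCBF pulls an arm from $I_l$. If $I_l$ is touched only during initialisation then $n_l(T) = 1$, and the claimed bound holds trivially since $\Delta_{k,l} \le 1 \le \log(T/\delta)$; so assume $t > K$. Because $I_k$ is alive at round $t$ yet was passed over in favour of $I_l$, the selection rule forces the index of $I_l$ to dominate that of $I_k$ at time $t-1$, namely
\[ \widehat{m}_l(n_l(t-1)) + \sqrt{\tfrac{\log(T/\delta)}{2 n_l(t-1)}} \;\ge\; \widehat{m}_k(n_k(t-1)) + \sqrt{\tfrac{\log(T/\delta)}{2 n_k(t-1)}}. \]
On the event $\mathcal{E}_b$ the concentration bound of Lemma \ref{lem:borne_m} applies to every $s \le (N_k \land T)$, and both counts $n_k(t-1), n_l(t-1)$ stay in this range. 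I would therefore lower bound the right-hand side by $m_k$ (the confidence radius exactly cancels the downward deviation of $\widehat{m}_k$) and upper bound the left-hand side by $m_l + 2\sqrt{\log(T/\delta)/(2 n_l(t-1))}$. Rearranging gives $2\sqrt{\log(T/\delta)/(2 n_l(t-1))} \ge m_k - m_l = \Delta_{k,l}$, that is $n_l(t-1) \le 2\log(T/\delta)/\Delta_{k,l}^2$.

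Finally, since $n_l(T) = n_l(t-1) + 1$ and $\Delta_{k,l} \le 1 \le \log(T/\delta)$, the extra unit pull is absorbed into the constant, yielding $n_l(T) \le 3\log(T/\delta)/\Delta_{k,l}^2$, as required.

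The \emph{only} delicate point is the bookkeeping around the availability of $I_k$: I must make sure that at the decisive round the comparison is legitimate, i.e. that $I_k$ is simultaneously alive and already sampled at least once, and that the sample sizes entering the two confidence bounds never leave the range $(N_k \land T)$ on which the concentration guaranteed by $\mathcal{E}_b$ holds. Both facts follow from $n_k(T) < N_k$ together with the initialisation step, so beyond this verification the argument is the routine UCB index comparison and no genuine obstacle remains.
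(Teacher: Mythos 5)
Your proposal is correct and follows essentially the same argument as the paper's proof: the availability of $I_k$ (guaranteed by $n_k(T)<N_k$), the UCB index comparison at a round where $I_l$ is pulled, the concentration bounds on $\mathcal{E}_b$ giving $m_k \leq m_l + 2\sqrt{\log(T/\delta)/(2n_l(t-1))}$, and the absorption of the final pull using $\Delta_{k,l}^2 \leq 1 \leq \log(T/\delta)$. The only cosmetic difference is that you isolate the last pull of $I_l$ and treat the initialisation-only case separately, whereas the paper folds that case into the bound $n_l(T)\leq \bigl(2\log(T/\delta)/\Delta_{k,l}^2 \lor 1\bigr)+1$.
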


To bound the regret $R_{subopt}$, we take advantage of the fact that every slightly sub-optimal interval $k$ cannot be selected more than $N_k$ times. This is done in the following lemma.
\begin{lem}\label{lem:R_subopt}
On the event $\mathcal{E}_a \cap \mathcal{E}_b$, 
\begin{equation*}
    R_{subopt} \leq  \frac{600L^2QN}{K^2} + 384\log(T/\delta)KQ\left(\log_2(K/L) \lor 1\right).
\end{equation*}
\end{lem}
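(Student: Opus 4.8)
The plan is to treat $R_{subopt} = \sum_{k \geq \widehat f + 2}\sum_{a_i \in I_k \cap \Phi(T)}(M - m(a_i))$ as a standard UCB sub-optimality sum, but organised by the interval gap $\Delta_k = M - m_k > 0$ and split at the scale $L/K$. The two summands in the bound should come from two groups of intervals: the \emph{slightly sub-optimal} ones with $\Delta_k = O(L/K)$ will produce the $O(L^2QN/K^2)$ term, and the \emph{significantly sub-optimal} ones with $\Delta_k \gtrsim L/K$ will produce the $O(\log(T/\delta)KQ\log_2(K/L))$ term. Two preliminary reductions are needed. First, I would convert the interval-level gap into a per-arm bound: for $a_i \in I_k$ I pick a reference point $a^\ast \in I_k$ with $m(a^\ast) \geq m_k$ (the essential supremum dominates the average $m_k$) and apply Lemma \ref{lem:lipschitz} to the minimum of $m$ over $I_k$; after the case split in that lemma this gives $M - m(a_i) \leq 2\Delta_k + 2L/K$ for every pulled arm, so the per-arm regret is at most a constant multiple of $\Delta_k \lor (L/K)$.

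Second, and crucially, I must secure a non-exhausted reference interval so that Lemma \ref{lem:bound_nl} applies. Here I would argue by a dichotomy on whether any interval among $I_1,\dots,I_{\widehat f + 1}$ is still alive at time $T$. If all of them are exhausted, then $\sum_{k \leq \widehat f+1} n_k(T) = \sum_{k \leq \widehat f + 1} N_k \geq T$, which, since exactly $T$ arms are pulled, forces $n_k(T)=0$ for every $k \geq \widehat f+2$; hence $R_{subopt}=0$ and there is nothing to prove. Otherwise, let $k^\ast \leq \widehat f + 1$ be a non-exhausted interval; then $m_{k^\ast} \geq m_{\widehat f+1} \geq M - 8L/K$ on $\mathcal{E}_a$, and for every sub-optimal $l$ we have $\Delta_{k^\ast,l} = m_{k^\ast} - m_l \geq \Delta_l - 8L/K$. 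Applying Lemma \ref{lem:bound_nl} with $k=k^\ast$ (legitimate since $n_{k^\ast}(T)<N_{k^\ast}$ and $\mathcal{E}_b$ holds) then yields $n_l(T) \leq 3\log(T/\delta)/\Delta_{k^\ast,l}^2$ for every sub-optimal $l$, and once $\Delta_l \geq 16 L/K$ this reads $n_l(T) \leq 12\log(T/\delta)/\Delta_l^2$.

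For the slightly sub-optimal intervals ($\Delta_l < 16L/K$) I would simply use $n_l(T) \leq N_l \leq 3N/(2K)$ on $\mathcal{E}_b$ together with the per-arm bound $M - m(a_i) = O(L/K)$, giving $O(NL/K^2)$ regret per interval. Their number is controlled by the margin assumption: Lemma \ref{lem:lipschitz} shows each is contained in $\{x : |m(x)-M| \leq cL/K\}$, whose Lebesgue measure is $\leq cQL/K$ by Assumption \ref{hyp:beta}, so, the intervals being disjoint of length $1/K$, there are at most $cQL$ of them and their total contribution is $O(L^2QN/K^2)$, the first term. For the significantly sub-optimal intervals I would peel the gaps dyadically: on the band $\Delta_l \in (2^{j-1}\Delta_0, 2^j\Delta_0]$ with $\Delta_0 \asymp L/K$, the margin assumption (again via Lemma \ref{lem:lipschitz}, since these intervals lie in $\{|m-M| \leq C 2^j\Delta_0\}$) bounds the number of intervals by $O(Q 2^j \Delta_0 K) = O(QL 2^j)$, while each contributes at most $n_l(T)\cdot O(\Delta_l) = O(\log(T/\delta)/\Delta_l) = O(\log(T/\delta)K/(L 2^j))$; the product is $O(QK\log(T/\delta))$ per scale, independent of $j$. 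Summing over the $O(\log_2(K/L)\lor 1)$ scales, from $\Delta_0 \asymp L/K$ up to the maximal gap $O(1)$, gives the second term.

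The main obstacle is precisely the reference-interval issue, i.e. guaranteeing that Lemma \ref{lem:bound_nl} can be invoked simultaneously for all sub-optimal intervals; the dichotomy above is what makes this clean, and it hinges on the budget identity $\sum_{k \leq \widehat f+1} N_k \geq T$ together with the closeness of $m_{\widehat f + 1}$ to $M$ guaranteed by $\mathcal{E}_a$. The remaining work, namely verifying the per-arm gap inequality through the several cases of Lemma \ref{lem:lipschitz} and tracking the numerical constants through the peeling so that they assemble into the stated $600$ and $384$, is routine but bookkeeping-heavy.
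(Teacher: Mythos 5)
Your proposal is correct and follows essentially the same route as the paper's proof: sub-optimal intervals are split at the scale $L/K$, with the near-threshold group handled by trivial counting (margin assumption plus Lemma \ref{lem:lipschitz}) and the rest handled by dyadic peeling combined with Lemma \ref{lem:bound_nl} applied against a non-exhausted reference interval among $I_1,\dots,I_{\widehat{f}+1}$. Your explicit dichotomy securing that reference interval is in fact slightly more careful than the paper, which asserts its existence "by definition of $\widehat{f}$" without noting the degenerate case where all of $I_1,\dots,I_{\widehat{f}+1}$ are exhausted and $R_{subopt}=0$ trivially.
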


While $R_{subopt}$ corresponds to the regret of pulling sub-optimal arms, and is bounded using classical bandit arguments, $R_{opt}$ corresponds to the regret of not having pulled optimal arms. We first control the number of optimal arms that have not been pulled. The arguments used to prove Lemma \ref{lem:R_subopt} can be used to control the number of arms pulled in sub-optimal intervals, which is equal to the number of non-zero terms in $R_{opt}$.

\begin{lem}\label{lem:n_subopt}
On $\mathcal{E}_a \cap \mathcal{E}_b$, the number of arms pulled in sub-optimal intervals by UCBF is bounded by $30Q(LN/K + \log(T/\delta)K^2/L)$.
\end{lem}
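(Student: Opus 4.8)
The plan is to adapt the argument used to establish Lemma~\ref{lem:R_subopt}, replacing the reward-weighted sum $\sum_{l}n_l(T)(M-m_l)$ by the unweighted count $\sum_{l}n_l(T)$, where both range over sub-optimal intervals (those with $m_l<m_{\hat f+1}$). First I would dispose of the trivial case: if UCBF never pulls an arm from a sub-optimal interval the bound holds with room to spare, so assume at least one such pull occurs. Since UCBF makes exactly $T$ pulls while $\sum_{k\le \hat f+1}N_k\ge T$ by definition of $\hat f$, a single sub-optimal pull forces at least one interval $I_{k_0}$ with $k_0\le \hat f+1$ to remain alive at time $T$, i.e. $n_{k_0}(T)<N_{k_0}$; otherwise UCBF would have spent strictly more than $T$ pulls on $I_1,\dots,I_{\hat f+1}$ alone. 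Because the intervals are sorted by decreasing mean and we are on $\mathcal{E}_a$, this reference interval satisfies $m_{k_0}\ge m_{\hat f+1}\ge M-8L/K$, so that for every sub-optimal $I_l$ the gap $\Delta_{k_0,l}=m_{k_0}-m_l$ is positive and Lemma~\ref{lem:bound_nl} applies, giving $n_l(T)\le 3\log(T/\delta)/\Delta_{k_0,l}^2$.

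Next I would split the sub-optimal intervals according to their distance $M-m_l$ to the threshold. For the near band $\{l:\,M-m_l<16L/K\}$ I would not use the gap bound (its denominator is too small) but simply bound $n_l(T)\le N_l\le 3N/(2K)$ on $\mathcal{E}_b$ and count the intervals. By Lemma~\ref{lem:lipschitz}, every such interval lies inside $\{x:\,|M-m(x)|\le cL/K\}$ for an absolute constant $c$, so by the margin Assumption~\ref{hyp:beta} the total length of these intervals is $O(L/K)$, whence their number is $O(QL)$. Multiplying by $3N/(2K)$ yields the $O(QLN/K)$ contribution, which is the source of the first term in the claimed bound.

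For the far region I would peel into geometric bands $B_j=\{l:\,M-m_l\in[2^jL/K,2^{j+1}L/K)\}$ with $j\ge 4$. On $B_j$, Lemma~\ref{lem:lipschitz} shows each interval is contained in $\{x:\,|M-m(x)|\le 2^{j+1}L/K\}$, so the margin condition bounds the number of intervals in $B_j$ by $2QL\,2^{j}$. On the same band $\Delta_{k_0,l}\ge (M-m_l)/2\ge 2^{j-1}L/K$ (using $m_{k_0}\ge M-8L/K$), so Lemma~\ref{lem:bound_nl} gives $n_l(T)\le 12\log(T/\delta)K^2/(2^{2j}L^2)$. The product over a band is therefore $O\big(Q\log(T/\delta)K^2/(L\,2^{j})\big)$, and---this is the key difference with Lemma~\ref{lem:R_subopt}, where the reward weight $M-m_l$ cancels one power of the gap and leaves a non-decaying summand---the sum over $j\ge 4$ is a convergent geometric series rather than one producing a $\log_2(K/L)$ factor. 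Summing gives the $O(Q\log(T/\delta)K^2/L)$ term, and adding the near contribution yields the claimed bound $30Q(LN/K+\log(T/\delta)K^2/L)$ after bookkeeping of the constants.

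The main obstacle is the bookkeeping around the threshold: I must convert the margin Assumption~\ref{hyp:beta}, stated for the Lebesgue measure of $\{x:\,|M-m(x)|\le\epsilon\}$, into a count of intervals whose \emph{average} $m_l$ lies in a prescribed band, which is exactly where Lemma~\ref{lem:lipschitz} is needed to transfer band membership of $m_l$ to uniform control of $m$ across $I_l$; and I must reconcile the gap $\Delta_{k_0,l}$ of Lemma~\ref{lem:bound_nl} (measured against $m_{k_0}$) with the distance $M-m_l$ used in the peeling, absorbing the $O(L/K)$ offset $m_{k_0}\ge M-8L/K$. Once the constant $c$ and the cutoff $j\ge 4$ are fixed so that this offset is absorbed, the remaining estimates are routine.
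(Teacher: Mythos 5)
Your proposal follows essentially the same route as the paper: the paper's own proof of Lemma \ref{lem:n_subopt} is precisely the unweighted version of the Lemma \ref{lem:R_subopt} computation --- a live reference interval among $I_1,\dots,I_{\hat f+1}$ (guaranteed because $T$ pulls cannot exhaust intervals containing at least $T$ arms once a sub-optimal pull occurs), Lemma \ref{lem:bound_nl} for the gap bound, Lemma \ref{lem:lipschitz} plus Assumption \ref{hyp:beta} to count intervals per band, and a geometric series; your observation that dropping the weight $M-m_l$ turns the non-decaying summand of $R_{subopt}$ into a convergent series is exactly why the $\log_2(K/L)$ factor present in Lemma \ref{lem:R_subopt} disappears here. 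One bookkeeping caveat: with your near band extending to $16L/K$, Lemma \ref{lem:lipschitz} only places those intervals inside $\{x:\vert M-m(x)\vert\le 32L/K\}$, so exhaustive counting gives at most $32QL\cdot\tfrac{3N}{2K}=48QLN/K$, which overshoots the stated constant $30$; the paper avoids this by cutting the exhaustively-counted band at $10L/K$ (giving $20QL\cdot\tfrac{3N}{2K}=30QLN/K$) and handling the slice $(10L/K,16L/K]$ with the gap bound, which still applies there because $\Delta_{k_0,l}\ge m_{\hat f+1}-m_l\ge 2L/K$ on $\mathcal{E}_a$.
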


This number is equal to the number of optimal arms that have not been pulled, and thus to the number of non-zero terms in $R_{opt}$. Note that this number is at least of order $N/K \lor K^2$, while $R_T^{(d)} + R_{subopt}$ is of the order $N/K^2 \lor K$ . Thus, bounding each term in $R_{opt}$ by $1$ will likely lead to sub-optimal bounds on the regret $R_T$. In the next Lemma, we characterise intervals whose arms have all been pulled by UCBF. Note that those intervals do not contributes to $R_{opt}.$

\begin{lem}\label{lem:used_up}
Let $A = 35\sqrt{\frac{K^3Q\log(T/\delta)}{NL}\lor 1}$. At time $T$, on the event $\mathcal{E}_{a}\cap\mathcal{E}_{b}$, all arms in intervals $I_k$ such that $m_k \geq M + AL/K$ have been pulled.
\end{lem}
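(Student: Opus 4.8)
The plan is to argue by contradiction: I would fix an interval $I_k$ with $m_k \geq M + AL/K$, suppose it is still alive at time $T$ (i.e.\ $n_k(T) < N_k$), and show that this forces $A^2 \lesssim K^3Q\log(T/\delta)/(NL)$, contradicting the definition of $A$. First I would record that, since $A \geq 35 > 1$, the hypothesis $m_k \geq M + AL/K$ together with the bound $m_{\hat{f}+1} \leq M + L/K$ from Lemma \ref{lem:control_hat_f} gives $m_k > m_{\hat{f}+1}$, hence $k \leq \hat{f}$; so $I_k$ is one of the intervals the oracle $\phi^d$ fully exhausts. The central identity is a budget accounting: since UCBF spends its whole budget, $T = \sum_{j\leq \hat{f}} n_j(T) + \sum_{j > \hat{f}} n_j(T)$, whereas by definition of $\hat{f}$ one has $T = \sum_{j \leq \hat{f}} N_j + r$ with $r \geq 0$. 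Subtracting yields $\sum_{j \leq \hat{f}} (N_j - n_j(T)) \leq \sum_{j > \hat{f}} n_j(T)$, i.e.\ the number of optimal arms left unpulled is at most the number of arms UCBF wastes on suboptimal intervals. I would then bound the two sides against each other.

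For the right-hand side, every interval $j > \hat{f}$ satisfies $m_j \leq m_{\hat{f}+1} \leq M + L/K$, so $\Delta_{k,j} = m_k - m_j \geq (A-1)L/K > 0$, and Lemma \ref{lem:bound_nl} (valid on $\mathcal{E}_b \cap \{n_k(T) < N_k\}$) gives $n_j(T) \leq 3\log(T/\delta)/\Delta_{k,j}^2$. I would sum $\sum_{j > \hat{f}} \Delta_{k,j}^{-2}$ by a layer-cake argument: writing $N(x) = \#\{j > \hat{f} : \Delta_{k,j} \leq x\}$ and using the margin Assumption \ref{hyp:beta} to get $N(x) \leq QK(x - g)_+ + O(QL)$ with $g = m_k - M \geq AL/K$, the integral $\int 2x^{-3} N(x)\,dx$ evaluates to $O(QK/g) = O(QK^2/(AL))$, the far-interval tail being lower order under $K \leq N^{2/3}$. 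Hence $\sum_{j > \hat{f}} n_j(T) = O\big(QK^2\log(T/\delta)/(AL)\big)$.

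For the left-hand side I would exhibit $\Omega(A)$ optimal intervals carrying a large deficit. Since $m$ is genuinely $L$-Lipschitz at every point where it equals $M$ (the weak Lipschitz Assumption \ref{hyp:lip}), climbing from level $M$ to level $M + \tfrac{A}{2}\tfrac{L}{K}$ near a crossing of the threshold requires horizontal distance at least $A/(2K)$; combined with Lemma \ref{lem:lipschitz}, this produces $\gtrsim A$ intervals $I_j$ with $m_j \in [M + 4L/K,\, M + \tfrac{A}{2}\tfrac{L}{K}]$, all satisfying $k < j \leq \hat{f}$ and $\Delta_{k,j} \geq AL/(2K)$. For each such interval, Lemma \ref{lem:bound_nl} gives $n_j(T) \leq 12 K^2 \log(T/\delta)/(A^2 L^2)$, which the definition of $A$ together with $QL \geq 1$ makes smaller than $0.01\,N/K$; so, using $N_j \geq N/(2K)$ on $\mathcal{E}_b$, each deficit is at least $0.49\,N/K$ and the total left-hand side is $\gtrsim AN/K$. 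Combining, $AN/K \lesssim QK^2\log(T/\delta)/(AL)$, i.e.\ $A^2 \lesssim QK^3\log(T/\delta)/(NL)$, which contradicts $A = 35\sqrt{K^3 Q\log(T/\delta)/(NL) \lor 1}$ as soon as the absorbed constants stay below $35^2$ — which they comfortably do.

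The main obstacle is the lower bound of the third step: certifying, from the weak Lipschitz condition alone, that $\Omega(A)$ optimal intervals truly sit just above the threshold. This is where the two assumptions play opposite, complementary roles — the margin Assumption \ref{hyp:beta} bounds the number of near-threshold intervals \emph{from above} (controlling the wasted suboptimal pulls on the right), while the Lipschitz Assumption \ref{hyp:lip} forces enough of them to exist \emph{from below} (guaranteeing a large unpulled-optimal deficit on the left). The delicate part is absorbing the boundary fuzziness into constants without degrading the $\Omega(A)$ count: the index $\hat{f}$ is only pinned to the threshold up to $|m_{\hat{f}} - M| \leq 4L/K$, intervals may straddle level sets, and one must pass between pointwise values of $m$ and the interval averages $m_j$ (this is exactly what Lemma \ref{lem:lipschitz} is for). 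Once this count is secured, the remaining estimates are routine.
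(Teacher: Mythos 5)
Your proposal is correct and follows essentially the same route as the paper's proof: contradiction via budget accounting (unpulled optimal arms bounded by arms spent in sub-optimal intervals), Lemma \ref{lem:bound_nl} applied with gaps $\Delta_{k,\cdot}$ to the still-alive interval, the margin assumption to bound the sub-optimal pulls by $O\bigl(QK^2\log(T/\delta)/(AL)\bigr)$, and the weak Lipschitz condition to exhibit $\Omega(A)$ optimal intervals each carrying a deficit $\Omega(N/K)$, which contradicts the choice of the constant $35$. The only cosmetic differences are that you run the accounting over all of $[1,\hat f]$ where the paper restricts to $[l,\hat f]$, and you evaluate the sub-optimal sum by a layer-cake integral where the paper peels dyadic shells $\widetilde{\mathcal{S}}_n$; the estimates and numerology are otherwise the same.
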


Using Lemmas \ref{lem:n_subopt} and \ref{lem:used_up}, we can finally control $R_{opt}$.

\begin{lem}\label{lem:bound_Ropt}
On event $\mathcal{E}_a \cap \mathcal{E}_b$, 
\begin{equation*}
R_{opt}\leq 60AQ\left(\frac{L^2N}{K^2}+ \log(T/\delta)K\right)
\end{equation*}
\end{lem}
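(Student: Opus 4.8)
The plan is to exploit the product structure of $R_{opt}$: rather than bounding each of its (many) nonzero terms by $1$, which the discussion preceding the statement shows is wasteful, I will separately bound the magnitude of each nonzero summand and the number of nonzero summands. Recall that a summand in
$$R_{opt} = \sum_{k=1}^{\widehat f}\ \sum_{a_i \in I_k \cap \overline{\Phi(T)}} \bigl(m(a_i) - M\bigr)$$
is nonzero (and relevant for an upper bound) only when $a_i$ is an optimal arm that UCBF failed to pull and $m(a_i) > M$.

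First I would localize the nonzero terms and bound their size. By Lemma \ref{lem:used_up}, on $\mathcal{E}_a \cap \mathcal{E}_b$ every arm lying in an interval with $m_k \ge M + AL/K$ has been pulled by time $T$; hence a nonzero summand can only come from an optimal interval $I_k$ with $M \le m_k < M + AL/K$. For such an interval and any arm $a_i \in I_k$ with $m(a_i) = M + \alpha_i L/K$, $\alpha_i>0$, I apply the min-bound of Lemma \ref{lem:lipschitz} to $a_i$, obtaining $\min_{a' \in I_k} m(a') \ge M + (\alpha_i - (\alpha_i \lor 2)/2)L/K$. Since $m_k = K\int_{I_k} m \ge \min_{a'\in I_k} m(a')$ and $m_k < M + AL/K$, the case $\alpha_i \ge 2$ forces $\alpha_i < 2A$, while the case $\alpha_i < 2$ already gives $\alpha_i < 2 \le 2A$ as $A \ge 35 \ge 1$. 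Thus every nonzero summand satisfies $m(a_i) - M \le 2AL/K$.

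Next I would count the nonzero summands, i.e.\ the number of optimal arms left unpulled by UCBF. Since UCBF pulls exactly $T$ arms and $\sum_{k\le\widehat f}N_k < T$ by the definition of $\widehat f$, the number of unpulled arms among $I_1,\dots,I_{\widehat f}$ is at most the number of arms UCBF pulls \emph{outside} $I_1,\dots,I_{\widehat f}$, that is in the boundary interval $I_{\widehat f+1}$ (at most $N_{\widehat f+1}\le 3N/(2K)$ on $\mathcal{E}_b$) and in the sub-optimal intervals. Invoking Lemma \ref{lem:n_subopt} together with the identity relating unpulled optimal arms to sub-optimal pulls, and using $QL \ge 1$ to absorb the boundary contribution, the number of nonzero summands is at most $30Q\bigl(LN/K + \log(T/\delta)K^2/L\bigr)$. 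Multiplying the two bounds then yields
$$R_{opt} \le 30Q\Bigl(\frac{LN}{K} + \frac{\log(T/\delta)K^2}{L}\Bigr)\cdot \frac{2AL}{K} = 60AQ\Bigl(\frac{L^2N}{K^2} + \log(T/\delta)K\Bigr),$$
which is the claimed inequality.

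The step I expect to require the most care is the counting argument. The delicate point is the bookkeeping around the boundary interval $I_{\widehat f+1}$, which is neither optimal nor sub-optimal under the running definitions: one must verify that the cardinality of unpulled optimal arms is genuinely controlled by the number of pulls in sub-optimal intervals furnished by Lemma \ref{lem:n_subopt} (up to the absorbed lower-order $O(N/K)$ boundary term), rather than by the far larger total count $\sum_{k\le\widehat f}N_k$ of optimal arms. Everything else is a routine assembly of the size bound and the count, so the conceptual content lies entirely in the localization provided by Lemma \ref{lem:used_up} and this cardinality identity.
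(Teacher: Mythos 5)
Your proposal follows the same route as the paper's proof: bound the number of nonzero summands of $R_{opt}$ via Lemma~\ref{lem:n_subopt}, bound each summand by $2AL/K$ via Lemma~\ref{lem:used_up} combined with the weak Lipschitz property, and multiply. Your per-term bound is a correct and careful expansion of a step the paper dispatches in one sentence, and on the counting side you are actually more scrupulous than the paper: the paper asserts that the number of unpulled optimal arms \emph{equals} the number of sub-optimal pulls, which is false in general because of the boundary interval $I_{\widehat{f}+1}$. The exact identity is
$\sum_{k\le\widehat{f}}\left(N_k - n_k(T)\right) = \sum_{k\ge\widehat{f}+2}n_k(T) + n_{\widehat{f}+1}(T) - \left(T - \sum_{k\le\widehat{f}}N_k\right)$,
and the correction term can be positive (e.g.\ when UCBF empties $I_{\widehat{f}+1}$), so you are right to replace the identity by the inequality ``unpulled optimal arms $<$ pulls outside $I_1,\dots,I_{\widehat{f}}$''.

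The one flaw is your absorption step: adding the boundary contribution $n_{\widehat{f}+1}(T)\le 1.5N/K \le 1.5QLN/K$ to the bound of Lemma~\ref{lem:n_subopt} cannot leave that bound unchanged. Done rigorously, your count is at most $31.5QLN/K + 30Q\log(T/\delta)K^2/L$, which after multiplication by $2AL/K$ gives $R_{opt}\le 63AQL^2N/K^2 + 60AQ\log(T/\delta)K$ --- harmless for Theorem~\ref{thm:bound_high_prob}, where all constants are swallowed by $C_{L,Q}$, but it does not literally yield the stated constant $60$. To recover it, do not treat the boundary term separately: in the proof of Lemma~\ref{lem:n_subopt} the $\mathcal{S}_0$ contribution is bounded by the \emph{total} number of arms in intervals whose mean reward lies in $[M-10L/K,\,M+L/K]$, a count obtained from the margin assumption that never uses sub-optimality; on $\mathcal{E}_a$ one has $m_{\widehat{f}+1}\in[M-8L/K,\,M+L/K]$, so $I_{\widehat{f}+1}$ can be adjoined to $\mathcal{S}_0$ at no cost, and the pulls in $I_{\widehat{f}+1}$ together with all sub-optimal pulls are jointly bounded by $30Q\left(LN/K + \log(T/\delta)K^2/L\right)$, giving exactly the claimed $60AQ\left(L^2N/K^2+\log(T/\delta)K\right)$.
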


To conclude, note that for the choice $\delta = N^{-4/3}$ and $K = \lfloor N^{1/3}\log(N)^{-2/3}\rfloor  \geq (1-p\land p)^{-1}$,
\begin{equation*}
    A \leq 35\sqrt{\frac{N\log(N)^{-2}\log(pN^{7/3})}{NL}\lor 1}\leq 35\sqrt{\frac{7\log(N)^{-1}}{3L}\lor 1}.
\end{equation*}
Moreover $K^{-2} = \lfloor N^{1/3}\log(N)^{-2/3}\rfloor^{-2}\leq 4\left( N^{1/3}\log(N)^{-2/3}\right)^{-2} $ since $N^{1/3}\log(N)^{-2/3} \geq 2$. Thus, on the event $\mathcal{E}_a\cap \mathcal{E}_b$, 
\begin{eqnarray*}
R_{opt}&\leq& 2100Q\left(4L^2N^{1/3}\log(N)^{4/3}+ 7/3\log(N)^{1/3}N^{1/3}\right)\sqrt{\frac{7\log(N)^{-1}}{3L}\lor 1}\\
R_{subopt}&\leq& 2400L^2QN^{1/3}\log(N)^{4/3} + 896QN^{1/3}\log(N)^{1/3}\left(\log_2(N/L) \lor 1\right)\\
R_{\hat{f}+1}&\leq&96N^{1/3}\log(N)^{4/3}\\
R_T^{(d)}&\leq& 1536QLN^{1/3}\log(N)^{4/3}.
\end{eqnarray*}

Thus, on $\mathcal{E}_a\cap \mathcal{E}_b$, we find that
$$R_T \leq CN^{1/3}\log(N)^{4/3},$$
or equivalently that 
$$R_T \leq C(T/p)^{1/3}\log(T/p)^{4/3}$$
for some constant $C$ depending only on $L$ and $Q$. Note that $K \leq N^{2/3}/4$ as soon as $N \geq 30$. Using the Lemmas \ref{lem:control_hat_f}, \ref{lem:control_hat_M}, \ref{lem:ech_arms}, \ref{lem:equi_nk} and \ref{lem:borne_m}, we find that  the event $\mathcal{E}_a\cap \mathcal{E}_b$ occurs with probability at least $1 - 6^{-2\lfloor N^{1/3}\log(N)^{4/3}\rfloor} - 2e^{-N^{1/3}/3} - 2e^{-N^{1/3}/3}N^{1/3}\log(N)^{-2/3} - 2N^{-1} \geq 1 - 12(N^{-1}\lor e^{-N^{1/3}/3})$.

\section{Proof of Theorem \ref{thm:lower_bound}}
\label{subsect:TH2}
Before proving Theorem \ref{thm:lower_bound}, we recall that under Assumption \ref{hyp:a_lb}, the set of covariates $(a_1, ..., a_N) = (1/N, ..., 1)$ is deterministic. We prove Theorem \ref{thm:lower_bound} by studying reward that are independent Bernoulli variables : under Assumption \ref{hyp:Bernoulli}, $y_i \sim$ Bernoulli$(m(a_i))$ for $i = 1,.., N$. At each time $t$, a strategy $\phi$ selects which arm $\phi(t)$ to pull based  on the past observations $(\phi(1), y_{\phi(1)}, ...,\phi(t-1), y_{\phi(t-1)})$. For $t = 1, ..., T$, let $\mathcal{H}_t = (a_1, y_1, ..., a_t, y_t)$.

Let  $m_0$ and $m_1$ be two payoff functions. We denote by $\mathbb{P}_0$ the distribution of $\mathcal{H}_T$ when the payoff function is $m_0$, and $\mathbb{P}_1$ the distribution of $\mathcal{H}_T$ when the payoff function is $m_1$. Moreover, let $\mathcal{Z}$ be any event $\sigma(\mathcal{H}_T)$-measurable. According to Bretagnolle-Huber inequality (see, e.g., Theorem 14.2 in \cite{lattimore2018bandit})

\begin{equation*}
\mathbb{P}_0(\mathcal{Z}) + \mathbb{P}_1(\bar{\mathcal{Z}}) \geq \frac{1}{2}\exp\left(-KL(\mathbb{P}_0, \mathbb{P}_1) \right).
\end{equation*}

Let us sketch the proof of Theorem \ref{thm:lower_bound}. In a first time, we design two payoff functions $m_0$ and $m_1$ that satisfy Assumptions \ref{hyp:lip} and \ref{hyp:beta} and differ on a small number of arms. Then, we bound their Kullblack-Leibler divergence. Finally, we define an event $\mathcal{Z}$ which is favorable for $m_1$ and unfavorable for $m_0$, and we provide lower bounds for $R_T$ on $\mathcal{Z}$ under $\mathbb{P}_0$ and on $\overline{\mathcal{Z}}$ under $\mathbb{P}_1$. 

We will henceforth assume that $$N \geq \frac{1}{(p\land1-p)^3(L \land 0.5)^{2}} \lor 811.$$
In order to define $m_0$ and $m_1$, we introduce the following notations. Let $\alpha \in (20 N^{-2/3},0.5]$ to be defined later, and let $\tilde{L} = L \land 0.5$ and $\delta = \alpha (N\tilde{L}^2)^{-1/3}$. Now, define $x_0 = 1-p - 2\delta$ and $x_1 = 1-p + 2\delta$. The inequality $2\delta < p\land (1-p)$ ensures that $0<x_0<1-p<x_1<1$. Moreover, $\tilde{L}(x_0\lor 1-x_1) \leq 1/2$ and $\tilde{L}\delta < 1/4$. We define $m_0$ and $m_1$ as follows.

\[ m_0(x) = \left\{
  \begin{array}{ll}
    1/2 - \tilde{L}(x_0-x) &\text{ if } x \in [0, x_0)\\
    1/2 - \tilde{L}(x-x_0) &\text{ if } x \in [x_0, x_0 + \delta)\\
    1/2 - \tilde{L}(1-p - x) &\text{ if } x \in [x_0 + \delta, 1-p)\\
    1/2 + \tilde{L}(x-(1-p)) &\text{ if } x \in [1-p, 1-p + \delta)\\
    1/2 + \tilde{L}(x_1-x) &\text{ if } x \in [1-p + \delta, x_1)\\
    1/2 + \tilde{L}(x-x_1) &\text{ if } x \in [x_1, 1]
  \end{array}
\right.\]
Define similarly
\[ m_1(x) = \left\{
  \begin{array}{ll}
    1/2 - \tilde{L}(x_0-x) &\text{ if } x \in [0, x_0)\\
    1/2 + \tilde{L}(x-x_0) &\text{ if } x \in [x_0, x_0 + \delta)\\
    1/2 + \tilde{L}(1-p - x) &\text{ if } x \in [x_0 + \delta, 1-p)\\
    1/2 - \tilde{L}(x-(1-p)) &\text{ if } x \in [1-p, 1-p + \delta)\\
    1/2 - \tilde{L}(x_1-x) &\text{ if } x \in [1-p + \delta, x_1)\\
    1/2 + \tilde{L}(x-x_1) &\text{ if } x \in [x_1, 1]
  \end{array}
\right.\]
The functions $m_0$ and $m_1$ are bounded in $[0,1]$, piecewise linear. They differ only on $[x_0, x_1]$, and are such that $$\min\left\{A : \lambda\left(\{x : m_0(x)\geq A\}\right) < p \right\} = \min\left\{A : \lambda\left(\{x : m_1(x)\geq A\}\right) < p \right\} = 1/2.$$ 
Under hypotesis \ref{hyp:a_lb}, the $T = pN$ best arms for the payoff function $m_0$ are in $[1-p, 1] \cap \{x_0\}$, while the $T = pN$ best arms for the payoff function $m_1$ are in $[x_1, 1] \cap [x_0, 1-p]$.

\begin{lem}\label{lem:h2h3}
The payoff functions $m_0$ and $m_1$ satisfy Assumptions \ref{hyp:lip} and \ref{hyp:beta}.
\end{lem}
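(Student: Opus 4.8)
\textbf{Proof proposal for Lemma \ref{lem:h2h3}.}

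The plan is to verify the two defining conditions of $\mathcal{F}_{p,L,Q}$ separately, namely the weak Lipschitz condition \eqref{eq:Lipshitz} and the margin condition \eqref{eq:margin}, for each of the two piecewise-linear functions $m_0$ and $m_1$. Since both functions are built from linear pieces with slopes $\pm\tilde{L}$ where $\tilde{L} = L \land 0.5 \leq L$, and since they agree outside $[x_0, x_1]$ and are structurally symmetric (swapping the roles of the two $\delta$-bumps), it suffices to carry out the verification for one of them, say $m_0$, and observe that the argument for $m_1$ is identical by symmetry. I would state this reduction explicitly at the outset to avoid duplicating the computation.

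For the margin condition, first I would locate the threshold level: by construction $M = 1/2$ for both functions, since the level set $\{x : m_0(x) \geq 1/2\}$ has Lebesgue measure exactly $p$. The set $\{x : |m_0(x) - 1/2| \leq \epsilon\}$ is then the preimage under $m_0$ of the band $[1/2 - \epsilon, 1/2 + \epsilon]$. Because each linear piece has slope of absolute value exactly $\tilde{L}$, the preimage of an interval of length $2\epsilon$ intersected with any single monotone piece has length at most $2\epsilon/\tilde{L}$. The key combinatorial step is to count how many linear pieces can contribute: $m_0$ has six pieces, but the band around $1/2$ is only crossed near the points where $m_0 = 1/2$, of which there are finitely many (at $x_0$, at $1-p$, at $x_1$, and the endpoints of the construction). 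I would bound the total measure by summing the per-piece contributions, obtaining something of the form $\lambda(\{|m_0 - 1/2| \leq \epsilon\}) \leq c\,\epsilon/\tilde{L}$ for an explicit small constant $c$ counting the relevant pieces, and then choose $Q \geq c/\tilde{L}$; the hypothesis $Q > (6/L \lor 12)$ should be exactly what makes this work, so I would check the constant matches.

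For the weak Lipschitz condition \eqref{eq:Lipshitz}, I would argue that at any point $x'$ with $m_0(x') = M = 1/2$, global $L$-Lipschitzness from $x'$ must hold, while at points far from $M$ the $\max$ with $|M - m(x)|$ provides slack. Concretely, for any pair $(x,y)$ I need $|m_0(x) - m_0(y)| \leq \max\{|1/2 - m_0(x)|, L|x-y|\}$. Since $m_0$ is globally $\tilde{L}$-Lipschitz (all slopes are $\pm\tilde{L}$ and it is continuous, so $|m_0(x) - m_0(y)| \leq \tilde{L}|x - y| \leq L|x-y|$), the second term in the $\max$ already dominates unconditionally, which makes this direction immediate. \emph{The main obstacle} I anticipate is not the Lipschitz bound — which follows for free from global $\tilde{L}$-Lipschitzness — but rather the careful bookkeeping in the margin count: I must confirm that $M = 1/2$ genuinely equals the threshold $\min\{A : \lambda(\{m_0 \geq A\}) < p\}$ (this uses $2\delta < p \land (1-p)$ and the precise placement of $x_0, 1-p, x_1$), and that the number of linear pieces meeting the band $[1/2-\epsilon, 1/2+\epsilon]$ is correctly tallied so the resulting constant stays below the prescribed threshold $Q > 6/L \lor 12$. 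I would handle this by writing out each of the six pieces, recording which attain the value $1/2$ and with what local behavior, and summing the at-most-$2\epsilon/\tilde{L}$ contributions piece by piece.
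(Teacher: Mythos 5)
Your proposal is correct and follows essentially the same route as the paper: the weak Lipschitz condition is immediate because $m_0,m_1$ are globally $\tilde{L}$-Lipschitz with $\tilde{L} = L \land 0.5 \leq L$, and the margin condition reduces to measuring the preimage of the band around $M = 1/2$, which the paper bounds by $6\epsilon/\tilde{L} = (6/L \lor 12)\,\epsilon \leq Q\epsilon$ — the paper just organizes the count as two cases ($\epsilon < \tilde{L}\delta$: three disjoint intervals around $x_0$, $1-p$, $x_1$ of length $2\epsilon/\tilde{L}$ each; $\epsilon \geq \tilde{L}\delta$: one merged interval of length $4\delta + 2\epsilon/\tilde{L} \leq 6\epsilon/\tilde{L}$) rather than as your per-piece sum. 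When you carry out your bookkeeping, note that each of the six linear pieces attains the level $1/2$ only at one of its endpoints (the crossings are exactly $x_0$, $1-p$, $x_1$, not the endpoints $0$ or $1$), so each piece contributes at most $\epsilon/\tilde{L}$ rather than your stated $2\epsilon/\tilde{L}$; this refinement is what makes the total constant $6$ instead of $12$, matching the hypothesis $Q > (6/L \lor 12)$.
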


Next, we bound the Kullback-Leibler divergence between $\mathbb{P}_0$ and $\mathbb{P}_1$.

\begin{lem}\label{lem:KL_01}
For the functions $m_0$ and $m_1$ defined above, $$KL(\mathbb{P}_0, \mathbb{P}_1) \leq  70.4\alpha^3.$$
\end{lem}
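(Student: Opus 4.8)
The plan is to reduce the statement to a per-arm computation via the standard divergence (information) decomposition for bandits, and then exploit the explicit piecewise-linear form of $m_0$ and $m_1$.

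First I would establish the decomposition. Consider the log-likelihood ratio of a trajectory $\mathcal{H}_T = (\phi(1), y_{\phi(1)}, \dots, \phi(T), y_{\phi(T)})$ under $\mathbb{P}_0$ and $\mathbb{P}_1$. Because the strategy $\phi$ selects $\phi(t)$ as a (possibly randomized) function of the past $\mathcal{H}_{t-1}$ only, and not of the unknown mean function, the action-selection factors are identical under both measures and cancel in the ratio. Writing $T_i \in \{0,1\}$ for the number of times arm $i$ is pulled (at most once, by the model), the chain rule for relative entropy then yields
$$KL(\mathbb{P}_0, \mathbb{P}_1) = \sum_{i=1}^N \mathbb{E}_0[T_i]\, KL\big(\mathrm{Ber}(m_0(a_i)), \mathrm{Ber}(m_1(a_i))\big),$$
where $\mathbb{E}_0[T_i] = \mathbb{P}_0(\text{arm } i \text{ is pulled}) \le 1$. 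Since $m_0$ and $m_1$ coincide outside $[x_0, x_1]$, every Bernoulli KL term vanishes for $a_i \notin [x_0, x_1]$, and bounding the surviving probabilities by $1$ gives
$$KL(\mathbb{P}_0, \mathbb{P}_1) \le \sum_{i\, :\, a_i \in [x_0, x_1]} KL\big(\mathrm{Ber}(m_0(a_i)), \mathrm{Ber}(m_1(a_i))\big).$$

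Next I would bound each Bernoulli term. On $[x_0, x_1]$ both $m_0$ and $m_1$ stay within $\tilde{L}\delta < 1/4$ of $1/2$, so $m_0(a_i), m_1(a_i) \in (1/4, 3/4)$ and in particular $m_1(a_i)(1 - m_1(a_i)) \ge 3/16$. Using $KL(\mathrm{Ber}(p), \mathrm{Ber}(q)) \le \chi^2(\mathrm{Ber}(p),\mathrm{Ber}(q)) = (p-q)^2/(q(1-q))$, each term is at most $\tfrac{16}{3}(m_0(a_i) - m_1(a_i))^2$. It then remains to control $\sum_{a_i \in [x_0,x_1]}(m_0(a_i) - m_1(a_i))^2$, and here I would use the explicit shape of the construction: by design $|m_0 - m_1|$ is a triangular wave on $[x_0, x_1]$ consisting of four linear ramps, each of horizontal length $\delta$, rising from $0$ to $2\tilde{L}\delta$. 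Since the $a_i = i/N$ are equally spaced, the contribution of one ramp is $\sum_j (2\tilde{L}\, j/N)^2 = \tfrac{4\tilde{L}^2}{N^2}\sum_j j^2$ over the $\le \delta N + 1$ arms it contains; comparing $\sum_j j^2$ with its closed form bounds a single ramp by a constant multiple of $\tilde{L}^2 \delta^3 N$. Summing the four ramps and substituting the identity $N\tilde{L}^2\delta^3 = \alpha^3$ (from $\delta = \alpha (N\tilde{L}^2)^{-1/3}$) gives $\sum (m_0(a_i)-m_1(a_i))^2 = O(\alpha^3)$. The assumption $\alpha > 20 N^{-2/3}$ forces $\delta N \gtrsim 20$, so the "$+1$" discretization corrections are absorbed into the leading term; tracking the constants through these steps lands on the explicit bound $KL(\mathbb{P}_0, \mathbb{P}_1) \le 70.4\,\alpha^3$.

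The main obstacle is the first step: carefully justifying the divergence decomposition in this non-standard pull-at-most-once model, i.e. verifying that the policy factors cancel and that the weight attached to arm $i$ is exactly $\mathbb{E}_0[T_i] = \mathbb{P}_0(\text{arm } i \text{ pulled}) \le 1$, rather than an expected number of repeated pulls as in the classical setting. Once this is in place the remaining steps are elementary; the only genuine care needed afterwards is the discrete-to-continuous comparison for $\sum_j j^2$ along each ramp and the bookkeeping of the lower-order terms required to reach the explicit constant.
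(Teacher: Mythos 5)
Your proposal is correct and follows essentially the same route as the paper's proof: the standard divergence decomposition with per-arm weight $\mathbb{E}_0[\mathds{1}_{i \in \Phi(T)}] \leq 1$, restriction to the four linear ramps on $[x_0,x_1]$, a quadratic bound on each Bernoulli KL, and a sum-of-squares computation over the equally spaced arms, with $N\delta \geq 31$ absorbing the ``$+1$'' discretization corrections. The only difference is cosmetic: you bound each term via the $\chi^2$ divergence by $\tfrac{16}{3}\left(m_0(a_i)-m_1(a_i)\right)^2$ where the paper uses $kl\bigl(\tfrac{1-\epsilon}{2},\tfrac{1+\epsilon}{2}\bigr) \leq 4\epsilon^2$, and since the paper's constant $70.4$ is loose (it discards the factor $\tfrac{1}{3}$ from the closed form of $\sum_j j^2$), your slightly larger per-term constant still lands comfortably under $70.4\,\alpha^3$.
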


We define $\mathcal{Z}$  as the following event :

$$\mathcal{Z} = \left\{ \underset{a_i \in [ x_0, 1-p]}{\sum} \mathds{1}_{\{i \in \Phi(T)\}} \geq N\delta -2\right\}.$$

Because of Assumption \ref{hyp:a_lb}, there are between $\lfloor 2N\delta \rfloor$ and $\lceil 2N\delta \rceil$ arms in $(x_0, 1-p)$. Under $\mathbb{P}_0$, the arms in $(x_0, 1-p)$ are sub-optimal, so $\mathcal{Z}$ is disadvantageous. On the contrary, under $\mathbb{P}_1$ all arms in $(x_0, 1-p)$ are optimal under $m_1$, and so $\overline{\mathcal{Z}}$ is disadvantageous. We provide a more detailed statement in the following lemma.

\begin{lem}\label{lem:R(m)}
Under $\mathbb{P}_0$, on $\mathcal{Z}$, $R_T \geq 0.22\alpha^2N^{1/3}$. Under $\mathbb{P}_1$, on $\overline{\mathcal{Z}}$, $R_T \geq 0.22\alpha^2N^{1/3}$.
\end{lem}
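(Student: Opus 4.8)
The plan is to lower-bound $R_T$ in each regime by exhibiting a fixed pool of arms on $[x_0,1-p]$ that the strategy provably mistreats, and then summing the associated gaps. First I would record the exact regret identity: writing $\mathrm{Opt}$ for the $T$ best arms (with respect to the relevant payoff function) and $\Phi(T)$ for the arms pulled by $\phi$, and using $|\mathrm{Opt}| = |\Phi(T)| = T$, we have
\[ R_T = \sum_{i \in \mathrm{Opt}\setminus\Phi(T)}\big(m(a_i) - M\big) + \sum_{i \in \Phi(T)\setminus\mathrm{Opt}}\big(M - m(a_i)\big), \quad M = 1/2, \]
where both sums are nonnegative since optimal arms have $m \geq M$ and sub-optimal ones $m \leq M$. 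Because $m_0$ and $m_1$ agree outside $[x_0,x_1]$ and equal $M$ at $x_0$, $1-p$ and $x_1$, a short count under Assumption \ref{hyp:a_lb} shows that in both cases $\mathrm{Opt}$ contains every arm of $[x_1,1]$ and none of $[0,x_0)$; the two optimal sets differ only on $[x_0,x_1]$, where under $m_0$ the optimal arms sit in $[1-p,x_1]$ and under $m_1$ they sit in $[x_0,1-p]$. Thus the whole regret localises to $[x_0,x_1]$.

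For $\mathbb{P}_0$ on $\mathcal{Z}$: every arm of $(x_0,1-p)$ has $m_0 < M$, and since at least $T$ arms have value $\geq M$ none of them lies in $\mathrm{Opt}$. On $\mathcal{Z}$ at least $N\delta - 2$ arms of $[x_0,1-p]$ are pulled, hence lie in $\Phi(T)\setminus\mathrm{Opt}$, so $R_T \geq \sum_{i\in\Phi(T)\cap(x_0,1-p)}(M - m_0(a_i))$. On $(x_0,1-p)$ the gap $M - m_0$ is the tent $\tilde{L}\min\{x-x_0,\,1-p-x\}$ of height $\tilde{L}\delta$, vanishing at both endpoints. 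For $\mathbb{P}_1$ on $\overline{\mathcal{Z}}$ the situation is the mirror image: every arm of $(x_0,1-p)$ is optimal under $m_1$ with gap $m_1 - M$ equal to the same tent, there are about $2N\delta$ such arms, and $\overline{\mathcal{Z}}$ forces at least $\simeq N\delta$ of them to remain unpulled, i.e. to lie in $\mathrm{Opt}\setminus\Phi(T)$; hence $R_T \geq \sum_{i\in(x_0,1-p)\setminus\Phi(T)}(m_1(a_i) - M)$. In both regimes it remains to lower-bound a sum of tent-gaps over a prescribed number ($\simeq N\delta$) of the arms of $[x_0,1-p]$.

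The core computation is to bound this sum in the worst case. A minimising strategy pulls (respectively omits) the arms carrying the smallest gaps, which are those nearest the endpoints $x_0$ and $1-p$; selecting the $2\ell \simeq N\delta$ smallest gaps, arranged as $\simeq \tilde{L}/N, 2\tilde{L}/N,\dots$ from each side, gives $\sum \geq \tilde{L}\ell^2/N \simeq N\tilde{L}\delta^2/4$. Substituting $\delta = \alpha(N\tilde{L}^2)^{-1/3}$ turns this into $\tfrac14\alpha^2 N^{1/3}\tilde{L}^{-1/3}$, and since $\tilde{L} \leq 1/2$ gives $\tilde{L}^{-1/3} \geq 2^{1/3}$, the bound exceeds $0.22\,\alpha^2 N^{1/3}$ once the lower-order corrections are controlled.

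The main obstacle is precisely this last, worst-case (rearrangement) step together with the constant bookkeeping: I must show that no admissible choice of pulled or omitted arms can cheat the tent, and keep track of the $-2$ in the definition of $\mathcal{Z}$, the floor/ceiling in the count of arms of $(x_0,1-p)$, and the two endpoint arms of value exactly $M$. The assumption $\alpha > 20 N^{-2/3}$ is what makes these corrections harmless: it guarantees $N\delta > 20$, so that $\simeq N\delta$ arms genuinely populate $(x_0,1-p)$, the discrete gap-sum stays within a constant factor of the continuous integral $N\tilde{L}\delta^2/4$, and the additive losses are negligible against the leading term, leaving the stated constant $0.22$ with room to spare.
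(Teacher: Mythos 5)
Your proposal is correct and follows essentially the same route as the paper's proof: the same localization of the regret to the pulled sub-optimal arms (under $\mathbb{P}_0$ on $\mathcal{Z}$) and the unpulled optimal arms (under $\mathbb{P}_1$ on $\overline{\mathcal{Z}}$) in $(x_0,1-p)$, the same worst-case rearrangement summing the smallest tent gaps $k\tilde{L}/N$ from each endpoint, and the same arithmetic $N\tilde{L}\delta^2/4 = \tfrac14\alpha^2 N^{1/3}\tilde{L}^{-1/3} \geq 2^{1/3}\alpha^2N^{1/3}/4$, with the condition $\alpha \geq 20N^{-2/3}$ (giving $N\delta \geq 31$) absorbing the floor/ceiling and additive corrections into the constant $0.22$.
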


Since $N \geq 811$, we can choose  for example $\alpha = 0.23$. Using $(a\lor b) \geq (a+b)/2$, we see that 
$$\max\left\{\mathbb{P}_0\left(R_T \geq 0.01N^{1/3} \right), \mathbb{P}_1\left(R_T \geq 0.01N^{1/3}\right)\right\} \geq 0.1.$$

\section{Upper bound on the regret in multi-dimensional settings}\label{sec:highdim}

In this section, we provide an upper bound on the regret of a natural extension of Algorithm UCBF to $d$-dimensional covariates. More precisely, we assume that the arms are described by covariates in the set $\mathcal{X} = [0,1]^d$ for some $d \in \mathbb{N}^*$. Similarly to the one-dimensional case, we assume that the covariates are uniformly distributed in $\mathcal{X}$:

\begin{hyp}\label{hyp:d-uniforme}
For $i = 1, ..., N$, $a_i \overset{i.i.d.}{\sim} \mathcal{U}([0,1]^d)$.
\end{hyp}

\noindent As in the one dimensional setting, we assume that the mean payoff function is weakly $L$-Lipschitz  with regard to the Euclidean distance:

\begin{hyp}\label{hyp:d-lip}
For all $(x, y) \in [0,1]^d \times [0,1]^d$, $$\left \vert m(x) - m(y) \right \vert \leq  \max\left\{ \vert M-m(x)\vert, L\left \Vert x - y \right \Vert_2\right\}.$$
\end{hyp}

Moreover we assume that the mean reward function $m : [0,1]^d \rightarrow [0,1]$ verifies Assumption \ref{hyp:beta} (here, $\lambda$ denotes the Lebesgue measure on $[0,1]^d$). Then, the UCBF Algorithm can readily be generalized to this $d$-dimensional setting, as described in Algorithm \ref{alg:d-dim}. The following Theorem bounds the regret of Algorithm $d$-UCBF.

\begin{algorithm}
\caption{$d$-dimensional Upper Confidence Bound for Finite continuum-armed bandits ($d$-UCBF)\label{alg:d-dim}}
\begin{algorithmic}
\STATE \textbf{Parameters:} $K, \delta$
\STATE \textbf{Initialisation:} Divide $[0,1]^d$ into $K^d$ bins $B_k$ such that for $k \in \{0, ..., K^d-1\}$, $B_k = [\frac{k_1}{K}, \frac{k_1+1}{K}) \times ... \times [\frac{k_d}{K}, \frac{k_d+1}{K})$, where $(k_1, ..., k_d)$ denotes the $d$-ary representation of $k$. Let $N_k = \sum_{1 \leq i \leq N} \mathds{1}\{a_i \in B_k\}$ be the number of arms in the bin $B_k$. Define the set of bins alive as the set of bins $B_k$ such that $N_k \geq 2$. Pull an arm uniformly at random in each bin alive.
\FOR{$t = K^d+1, ..., T$} 
    \STATE $-$ Select an bin $B_k$ that maximizes $\widehat{m}_{k}(n_k(t-1)) + \sqrt{\frac{\log(T/\delta)}{2n_k(t-1)}}$ among the set of alive bins, where $n_k(t-1)$ is the number of arms pulled from $B_k$ by the algorithm before time $t$, and $\widehat{m}_{k}(n_k(t-1))$ is the average reward obtained from those $n_k(t-1)$ samples.
    \STATE $-$ Pull an arm selected uniformly at random among the arms in $B_k$. Remove this arm from $B_k$. If $B_k$ is empty, remove $B_k$ from the set of alive bins.
\ENDFOR
\end{algorithmic}
\end{algorithm}

\begin{thm}\label{thm:d-bound_high_prob}
Under Assumption \ref{hyp:d-uniforme}, \ref{hyp:d-lip} and \ref{hyp:beta}, there exists a constant $C_{L,Q,p, d}$ depending only on $L$, $Q$ $p$ and $d$ such that for the choice $K = \lceil N^{\frac{1}{d+2}}\log(N)^{-\frac{2}{d+2}}\rceil$ and $\delta = N^{-\frac{2d+2}{d+2}}$,
\begin{eqnarray*}
    R_T \leq C_{L,Q,p,d} \, T^{\frac{d}{d+2}}\log(T)^{\frac{4}{d+2}}
\end{eqnarray*}
with probability $1 - O(N^{-1})$.
\end{thm}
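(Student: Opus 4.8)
The plan is to mirror, \emph{mutatis mutandis}, the proof of Theorem~\ref{thm:bound_high_prob} given in Section~\ref{subsec:TH1}, replacing the $K$ intervals by the $K^d$ bins $B_k$ and carefully tracking the dimension-dependent geometric constants. As before I would introduce the oracle $\phi^d$ for the associated F-MAB with $K^d$ arms (pulling all arms of the best bins $B_1,\dots,B_{\widehat f}$ in decreasing order of $m_k = K^d\int_{B_k} m$, then filling $B_{\widehat f+1}$) and split $R_T = R_T^{(d)} + R_T^{(FMAB)}$ exactly as in Equation~\eqref{eq:dec_discr+learning}. The two inputs that change are purely geometric: a bin $B_k$ is a cube of side $1/K$, hence of Euclidean diameter $\sqrt d/K$, so the within-bin fluctuation controlled by Lemma~\ref{lem:lipschitz} becomes of order $L\sqrt d/K$ instead of $L/K$; and each bin now contains $\mathbb{E}[N_k] = N/K^d$ arms, so the concentration statement of Lemma~\ref{lem:equi_nk} must be re-derived to give $N_k \in [N/(2K^d), 3N/(2K^d)]$ for all $k$ with high probability. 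The latter still holds because the chosen $K$ yields $N/K^d = N^{2/(d+2)} \to \infty$; the union bound over $K^d$ bins costs only a factor $K^d$, which remains negligible.

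Next I would bound the discretization error and the sub-optimal part, both of which generalise cleanly. For $R_T^{(d)}$ (the analogue of Lemma~\ref{lem:bound_Rd}) I would again argue that $\phi^*$ and $\phi^d$ can differ only on arms whose reward lies within $O(L\sqrt d/K)$ of $M$; Assumption~\ref{hyp:beta} bounds the Lebesgue measure of this shell by $O(QL\sqrt d/K)$, hence the number of such arms by $O(QL\sqrt d\, N/K)$, and each contributes $O(L\sqrt d/K)$, giving $R_T^{(d)} = O(T/K^2)$. For $R_{subopt}$ I would combine two facts. The finiteness bound of Remark~\ref{rem:finiteness} is dimension-free in the form $O(\Delta^2 T/p)$, so bins sub-optimal by $O(1/K)$ contribute $O(T/K^2)$. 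For the genuinely sub-optimal bins I would run the classical UCB band argument (Lemmas~\ref{lem:bound_nl} and~\ref{lem:R_subopt}): the only change is that a band of reward-width $L/K$ around any level now meets $O(QLK^{d-1})$ bins rather than $O(1)$, so summing $3\log(T/\delta)/\Delta^2$ over bands produces $O(QK^d\log(T)\log K)$. This extra factor $K^{d-1}$ per band is precisely what upgrades the one-dimensional $K$ to $K^d$ in the learning cost.

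The delicate term, and the place where the one-dimensional reasoning genuinely needs to be rethought, is $R_{opt}$. In dimension one the sketch of Theorem~\ref{thm:bound_high_prob} argued that only a constant number of optimal intervals survive; this fails for $d>1$, because the level set $\{m = M\}$ is $(d-1)$-dimensional and meets $\sim K^{d-1}$ bins, so many optimal bins can remain alive at time $T$. The fix is to bound not the number of surviving bins but the reward of an unpulled optimal arm. I would first show (the analogue of Lemma~\ref{lem:n_subopt}) that, up to the boundary-bin term $R_{\widehat f+1}=O(N/K^2)$ handled as in the one-dimensional case, the number of unpulled optimal arms equals the number of sub-optimal pulls, which is $O\!\big(Q(LN/K + \log(T/\delta)K^{d+1}/L)\big)$. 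The key step is the exhaustion lemma (analogue of Lemma~\ref{lem:used_up}): taking $A$ of order $\sqrt{Q\log(T/\delta)K^{d+2}/(NL)}\lor 1$, every bin with $m_k \geq M + AL/K$ is fully pulled by time $T$. I would prove this by observing that the thin shell $\{M \leq m < M+AL/K\}$ has measure $O(QAL/K)$ and therefore contains $\Omega(QALN/K)$ arms; with $A$ chosen at this order the shell is large enough to absorb every unpulled optimal arm, so, since UCBF is optimistic and prefers high-reward bins, no arm of a bin with reward $\geq M+AL/K$ is ever left unpulled. Consequently every unpulled optimal arm lies in a bin of reward below $M+AL/K$ and contributes at most $(A+\sqrt d)L/K$, whence $R_{opt} = O\!\big(AQ(L^2N/K^2 + \log(T/\delta)K^d)\big)$. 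Making this absorption argument rigorous through the UCBF confidence intervals, rather than by mere arm-counting, is the main obstacle I expect.

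It then remains to balance the terms. For $K = \lceil N^{1/(d+2)}\log(N)^{-2/(d+2)}\rceil$ one has $K^{d+2} \asymp N/\log(N)^2$, so $A = O(1)$ and $N/K^2 \asymp K^d\log(N)^2$; the discretization and optimal contributions are then both $O(N/K^2)$ and the sub-optimal contribution is $O(K^d\log(T)\log K)$, and all three coincide at $O\!\big(N^{d/(d+2)}\log(N)^{4/(d+2)}\big) = O\!\big(T^{d/(d+2)}\log(T)^{4/(d+2)}\big)$ since $T = pN$ with $p$ fixed (the dependence on $p$ being absorbed into $C_{L,Q,p,d}$). Finally, choosing $\delta = N^{-(2d+2)/(d+2)}$ makes the confidence-bound failure probability $2K^d\delta = O(N^{-1})$, while the concentration failure probabilities are $K^d e^{-\Omega(N^{2/(d+2)})} = o(N^{-1})$; collecting these on the analogue of $\mathcal E_a \cap \mathcal E_b$ yields the stated $1-O(N^{-1})$ guarantee.
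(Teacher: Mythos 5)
Your overall architecture matches the paper's: the same decomposition $R_T = R_T^{(d)} + R_T^{(FMAB)}$, the same event construction, the same dyadic-shell bound for $R_{subopt}$, and, for the delicate term $R_{opt}$, the same two-step plan (count unpulled optimal arms via sub-optimal pulls, then prove an exhaustion lemma stating that every bin with $m_k \geq M + A\sqrt{d}L/K$ dies, with $A$ of essentially the order you propose). You also correctly identify why the one-dimensional "only constantly many optimal intervals survive" argument breaks for $d>1$. However, there is a genuine gap at the heart of your exhaustion argument. You write that the shell $\{M \leq m < M + AL/K\}$ ``has measure $O(QAL/K)$ and therefore contains $\Omega(QALN/K)$ arms.'' This is a non sequitur: Assumption \ref{hyp:beta} is a \emph{margin} condition, i.e.\ an \emph{upper} bound on the measure of level shells, and an upper bound on the measure cannot produce a lower bound on the number of arms the shell contains. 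Nothing in Assumptions \ref{hyp:d-lip} and \ref{hyp:beta} as you invoke them rules out that the mass of rewards just above $M$ is very thin, in which case the shell could contain far too few arms to ``absorb'' the unpulled optimal ones and the contradiction you aim for never materialises.

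The missing ingredient is precisely Lemma \ref{lem:isoperimetrie} of the paper: an \emph{anti-concentration} lower bound $\mathbb{P}\left(m(a_1) \in [M, M+t)\right) \geq c_{p,d}\,t/L$, proved by transporting the uniform law on $[0,1]^d$ to a Gaussian and applying the Gaussian isoperimetric inequality to the sub-level set $\{m < M\}$ (whose measure is at most $1-p$), using the weak Lipschitz condition to show that the super-level set $\{m \geq M+t\}$ lies at distance $\gtrsim t/L$ from it. In dimension one this lower bound is elementary (a segment of length $1/K$ on which $m$ crosses the level $M$ exists by continuity and the Lipschitz property), which is why you could overlook it when transposing the 1-d proof; in dimension $d$ it is the genuinely new technical step, and it is also where the $p$-dependence of the constant $C_{L,Q,p,d}$ enters, via $c_{p,d}$. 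The same lemma is needed in a second place your proposal treats as ``clean'': in dimension $d$ the random index $\widehat{f}$ can deviate from $f$ by order $K^{d-1}$ bins (Lemma \ref{lem:d-control_hat_f}, $\epsilon = \lceil c_{p,d}K^{d-1}\rceil$), not by $1$ as in dimension one, and concluding that $\vert m_{\widehat{f}} - M\vert = O(\sqrt{d}L/K)$ despite this slack again requires knowing there are \emph{at least} $\Omega(K^{d-1})$ bins with mean reward within $O(L/K)$ of $M$ — a lower bound that only the isoperimetric lemma provides. Without this ingredient, both your bound on $R_T^{(d)}$ and your exhaustion lemma (the analogue of Lemma \ref{lem:d-used_up}) cannot be completed.
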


The rest of this Section is devoted to proving Theorem \ref{thm:d-bound_high_prob}. To do so, we  follow the main lines of the proof of Theorem \ref{thm:bound_high_prob}. Some Lemmas follow readily from results developed in Section \ref{subsec:TH1}, and their proofs are therefore omitted. The remaining Lemmas are proved in Section \ref{subsec:auxi}.

Let us now prove Theorem \ref{thm:d-bound_high_prob}. As for Theorem \ref{thm:bound_high_prob}, we begin by controlling the fluctuations of the mean payoff function $m$ within a bin.


\begin{lem}\label{lem:d-lipschitz}
Let $a \in [0,1]^d$ be such that $m(a) = M + \alpha L/K$  for some $\alpha >0$. Moreover, let $k$ be such that $a \in B_k$. Then $$\max_{a' \in B_k} m(a') \leq M + \left(\alpha + (\alpha \lor \sqrt{d})\right)\frac{L}{K},$$ and $$\min_{a' \in B_k} m(a') \geq M + \left(\alpha - \frac{(\alpha \lor 2\sqrt{d})}{2}\right)\frac{L}{K}.$$
Similarly, let $a \in [0,1]^d$ be such that $m(a) = M - \alpha \frac{L}{K}$, where $\alpha >0$. Moreover, let $k$ be such that $a \in B_k$. Then $$\min_{a' \in B_k} m(a') \geq M - \left(\alpha + (\alpha \lor \sqrt{d})\right)\frac{L}{K},$$ and $$\max_{a' \in B_k} m(a') \leq M - \left(\alpha - \frac{(\alpha \lor 2\sqrt{d})}{2}\right)\frac{L}{K}.$$
\end{lem}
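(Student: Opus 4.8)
The plan is to prove Lemma~\ref{lem:d-lipschitz} as the $d$-dimensional counterpart of Lemma~\ref{lem:lipschitz}, following the same argument with a single geometric change: a bin $B_k$ is a cube of side $1/K$, so its Euclidean diameter is $\sqrt{d}/K$ rather than the length $1/K$ of a one-dimensional interval. Consequently $\|a-a'\|_2 \leq \sqrt{d}/K$ for all $a,a' \in B_k$, and the factors $(\alpha \lor 1)$ and $(\alpha \lor 2)$ appearing in Lemma~\ref{lem:lipschitz} become $(\alpha \lor \sqrt{d})$ and $(\alpha \lor 2\sqrt{d})$. The first preliminary step is to record the sharpened form of the weak Lipschitz assumption: since Assumption~\ref{hyp:d-lip} holds for every ordered pair, writing it both for $(x,y)$ and for $(y,x)$ and keeping the smaller right-hand side yields $|m(x)-m(y)| \leq \max\{\min(|M-m(x)|,|M-m(y)|),\,L\|x-y\|_2\}$. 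The decisive feature is that the admissible fluctuation between $x$ and $y$ is governed by whichever of the two is closer to the level $M$.

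The upper bounds are then immediate. Fixing $a$ with $m(a)=M+\alpha L/K$ and any $a'\in B_k$, I apply Assumption~\ref{hyp:d-lip} at $x=a$ together with $\|a-a'\|_2\le\sqrt d/K$ to get $|m(a)-m(a')|\le\max\{\alpha L/K,\,L\sqrt d/K\}=(\alpha\lor\sqrt d)L/K$. The upper inequality $m(a')\le m(a)+(\alpha\lor\sqrt d)L/K = M+(\alpha+(\alpha\lor\sqrt d))L/K$ follows at once, and taking the supremum over $a'\in B_k$ gives the first claim. The same two-sided estimate yields a lower bound $m(a')\ge M+(\alpha-(\alpha\lor\sqrt d))L/K$, but this is strictly weaker than the asserted $M+(\alpha-\tfrac{1}{2}(\alpha\lor 2\sqrt d))L/K$, so a refinement is needed.

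For the sharper lower bound I argue by contradiction. Writing $m(a')=M+sL/K$ with $s<\alpha$ and $r=\|a-a'\|_2\le\sqrt d/K$, the symmetric estimate of the first paragraph reads, after dividing by $L/K$, $(\alpha-s)\le\max\{\min(\alpha,|s|),\,\sqrt d\}$. Assuming $s<\alpha-\tfrac{1}{2}(\alpha\lor 2\sqrt d)$, I split on which term attains the maximum: when it is $\sqrt d$ one gets $s\ge\alpha-\sqrt d$, and when it is $\min(\alpha,|s|)$ a short case distinction on the sign of $s$ forces either $s\ge\alpha/2$ or an outright impossibility. Since both $\alpha-\sqrt d$ and $\alpha/2$ are at least $\alpha-\tfrac{1}{2}(\alpha\lor 2\sqrt d)$ (each reduces to checking $\alpha\lor 2\sqrt d\ge 2\sqrt d$, respectively $\alpha\lor 2\sqrt d\ge\alpha$), every branch contradicts the assumed strict inequality, which establishes the lower bound. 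The second half of the lemma, corresponding to $m(a)=M-\alpha L/K$, follows by applying the first half to $m\mapsto 2M-m$: this reflection leaves the inequality in Assumption~\ref{hyp:d-lip} and the level $M$ invariant while interchanging the roles of $\max$ and $\min$ over $B_k$.

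I expect the case analysis of the third paragraph to be the only delicate point; everything else is a transcription of the one-dimensional argument with $\sqrt d$ in place of $1$. The essential conceptual input, and the reason the naive bound can be improved, is precisely the observation from the first paragraph that the weak Lipschitz condition constrains fluctuations through the point nearer to $M$, so a candidate minimiser over $B_k$ cannot simultaneously lie far below $M$ and close to $a$.
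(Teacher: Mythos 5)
Your proposal is correct and follows essentially the same route as the paper: the paper's own proof of Lemma~\ref{lem:d-lipschitz} simply adapts the one-dimensional argument of Lemma~\ref{lem:lipschitz} by replacing the interval length $1/K$ with the bin diameter $\sqrt{d}/K$, applying Assumption~\ref{hyp:d-lip} at $a$ for the upper bound and at $a'$ for the lower bound, with the same two-case analysis (distance term dominant versus $\vert M - m(a')\vert$ dominant) that your contradiction argument reproduces. Your write-up is in fact slightly more careful than the paper's, since you treat the sign of $s$ (i.e.\ the possibility $m(a') < M$) explicitly, a sub-case the paper's one-dimensional proof passes over implicitly, and your reflection $m \mapsto 2M - m$ is a clean substitute for the paper's ``same arguments'' remark for the second half.
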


\begin{proof}
In the general $d$-dimensional case, two points in the same bin may be separated by a Euclidean distance of $\sqrt{d}/K$. Using this remark, one can readily adapt the proof of Lemma \ref{lem:lipschitz} to prove Lemma \ref{lem:d-lipschitz}.
\end{proof}

Conversely, we obtain a lower bound on the Lebesgue measure of arms with mean reward close to $M$. 

\begin{lem}\label{lem:isoperimetrie}
There exist a constant $c_{p,d}>0$ depending only on $p$ and $d$ such that for all $t\in (0, \sqrt{d}L]$,
\begin{align*}
\mathbb{P}\left(m(a_1) \in [M, M+t)\right) \geq c_{p,d}\frac{t}{L}.
\end{align*}
\end{lem}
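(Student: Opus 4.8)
The plan is to rewrite the probability as a Lebesgue measure and bound it below by the volume of a thin ``tube'' along the boundary of the superlevel set $\{m\ge M\}$, the \emph{width} of the tube being controlled by the weak Lipschitz assumption and its \emph{volume} by an isoperimetric inequality on the cube — which is why the statement is labelled an isoperimetric result. Throughout set $r = t/L$ and $A = \{x\in[0,1]^d : m(x)\ge M\}$, so that, since $a_1\sim\mathcal{U}([0,1]^d)$, one has $\mathbb{P}(m(a_1)\in[M,M+t)) = \lambda(A)-\lambda(\{m\ge M+t\})$, and the task is to bound this difference below by $c_{p,d}\,r$.

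First I would use Assumption~\ref{hyp:d-lip} to prove the inclusion $\{m\ge M+t\}\subseteq\{x\in A:\dist(x,[0,1]^d\setminus A)>r\}$. Indeed, if $m(x)\ge M+t$ and $y\in[0,1]^d$ with $\|x-y\|_2\le r$, then $L\|x-y\|_2\le t\le m(x)-M=|M-m(x)|$, so the weak Lipschitz inequality gives $|m(x)-m(y)|\le|M-m(x)|=m(x)-M$, whence $m(y)\ge M$, i.e.\ the whole $r$-ball of $x$ inside the cube lies in $A$. Taking complements within $A$, the set $\{M\le m<M+t\}=A\setminus\{m\ge M+t\}$ contains the inner tube $\partial_r A:=\{x\in A:\dist(x,[0,1]^d\setminus A)\le r\}$, so it suffices to bound $\lambda(\partial_r A)$ from below.

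Next I would estimate the tube volume by isoperimetry. Up to a null set, $\partial_r A$ equals $(A^c)_r\setminus A^c$, where $A^c=\{m<M\}$ and $(A^c)_r$ is the Euclidean $r$-enlargement of $A^c$ inside $[0,1]^d$; thus the quantity to bound is the excess volume produced by enlarging $A^c$ by $r$. The definition of $M$ together with the margin condition (Assumption~\ref{hyp:beta}, which forces $\lambda(\{m=M\})=0$) give $\lambda(A)=p$ and $\lambda(A^c)=1-p$, so both sets have measure at least $\min(p,1-p)>0$. A standard isoperimetric inequality for the uniform measure on $[0,1]^d$ then furnishes constants $r_0,c_0>0$, depending only on $\min(p,1-p)$ and $d$, with $\lambda((A^c)_r)-\lambda(A^c)\ge c_0 r$ for all $0<r\le r_0$.

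Finally, since $r=t/L$ ranges over $(0,\sqrt d]$ whereas the isoperimetric bound holds only for $r\le r_0$, I would extend it by monotonicity: $r\mapsto\lambda((A^c)_r)$ is nondecreasing, so for $r_0<r\le\sqrt d$ one has $\lambda((A^c)_r)-\lambda(A^c)\ge c_0 r_0\ge (c_0 r_0/\sqrt d)\,r$. Setting $c_{p,d}=\min\{c_0,\,c_0 r_0/\sqrt d\}$ yields, for all $t\in(0,\sqrt d L]$,
\[ \lambda(\{M\le m<M+t\}) \;\ge\; \lambda((A^c)_r)-\lambda(A^c) \;\ge\; c_{p,d}\,r \;=\; c_{p,d}\,\frac{t}{L}, \]
which is the claim. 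The main obstacle is precisely the isoperimetric step: I need an enlargement deficit that is linear in $r$ with a constant uniform in the position and shape of $A$ and degrading only through $\min(p,1-p)$ and $d$. A naive fiberwise argument fails for ``horizontally layered'' sets (where $m$ is nearly constant along a coordinate), so a genuine isoperimetric or Cheeger inequality on $[0,1]^d$ is required — or, passing to coordinate boxes via $B_2(x,r)\supseteq B_\infty(x,r/\sqrt d)$, an elementary projection argument — with due care for the effect of the cube boundary on the enlargement.
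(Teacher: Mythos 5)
Your proposal is correct in outline, but it takes a genuinely different route from the paper after the (shared) first step. Both proofs begin identically: the weak Lipschitz condition (Assumption \ref{hyp:d-lip}) forces the superlevel set $S_1=\{m\ge M+t\}$ to lie at Euclidean distance at least $t/L$ from $S_2=\{m<M\}$, so the slab $S_3=\{M\le m<M+t\}$ must contain a separating tube. From there the paper does \emph{not} work on the cube: it transfers the problem to Gaussian space by composing $m$ with the coordinatewise normal c.d.f.\ $F$ (which is $(2\pi)^{-1/2}$-Lipschitz, so the weak Lipschitz property survives with constant $L/\sqrt{2\pi}$), and then applies the sharp Gaussian isoperimetric inequality (Theorem 5.2.1 in \cite{vershynin_2018}), for which half-spaces are extremal; this yields the explicit constant $c_{p,d}=e^{-(F^{-1}(p)-\sqrt{2\pi d})^2/2}$. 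You instead stay on $[0,1]^d$ and invoke an enlargement inequality for the uniform measure on the cube. That ingredient does exist and is exactly what you need: the Lov\'asz--Simonovits localization inequality for convex bodies states that if $K$ has diameter $D$ and $K=S_1\cup S_2\cup S_3$ is a measurable partition with $\dist(S_1,S_2)\ge\epsilon$, then $\lambda(S_3)\ge \frac{2\epsilon}{D}\min\left(\lambda(S_1),\lambda(S_2)\right)$. In fact this three-set form would let you bypass the tube-plus-Cheeger-plus-monotonicity machinery entirely: with $\epsilon=t/L$, $D=\sqrt d$, $\lambda(S_2)=1-p$ and $\lambda(S_1)=p-\lambda(S_3)$, a one-line case analysis (according to whether $\lambda(S_3)\ge p/2$) gives the lemma with $c_{p,d}$ of order $\min(p,1-p)/\sqrt{d}$ --- incidentally a \emph{polynomial} dimension dependence, better than the paper's constant, which decays like $e^{-\Theta(d)}$. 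Two points you get right that are worth stressing: you correctly use the margin condition (Assumption \ref{hyp:beta}) to conclude $\lambda(\{m=M\})=0$ and hence $\lambda(\{m\ge M\})=p$ exactly (the definition of $M$ alone only gives $\lambda(\{m\ge M\})\ge p$), and you correctly diagnose that a naive fiberwise projection argument fails for layered sets, so a genuine convex-body isoperimetric input is unavoidable. The only gap between your write-up and a complete proof is replacing the appeal to ``a standard isoperimetric inequality'' by the precise citation above; with it, your argument closes.
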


Then, we decompose the regret $R_T$ into the sum of a discretization error, and of the cost of learning in the corresponding finite $K^d$-armed bandit problem. For $k = 0, ..., K^d-1$, we  define $m_k = K^d\int_{a \in B_K}m(a)da$ as the mean payment for pulling an arm uniformly in bin $B_k$. In order to avoid cumbersome notations for reordering the bins, we assume henceforth (without loss of generality) that $\left\{m_k\right\}_{0\leq k \leq K^d-1}$ is a decreasing sequence. Similarly to the one-dimensional case, we denote by $\phi^d$ the strategy pulling all arms in the bin $B_1$, $B_2$, up to $B_{\widehat{f}}$ and pulling the remaining arms in $B_{\hat{f}+1}$, where $\widehat{f}$ is such that $ N_{1} + .. +  N_{\widehat{f}}< T \leq N_{1} + .. +  N_{\widehat{f}+1}$. Note that $\phi^d$ corresponds to the oracle strategy for the discretized problem. We also denote $f = \lfloor pK^d\rfloor$. Recall that we denote by $\phi^*(t)$ the arm pulled at time $t$ by the oracle strategy, and by $\phi(t)$ the arm pulled at time $t$ by UCBF.

Now, decompose $R_T$ as follows :
\begin{eqnarray*}
    R_T &=& \underset{t = 1.. T }{\sum} m(a_{\phi^*(t)}) - \underset{t = 1..T}{\sum} m(a_{\phi(t)})\\
     &=& \underset{t = 1.. T }{\sum} m(a_{\phi^*(t)}) - \underset{t = 1..T}{\sum} m(a_{\phi^d(t)}) + \underset{t = 1.. T }{\sum} m(a_{\phi^d(t)}) - \underset{t = 1..T}{\sum} m(a_{\phi(t)}).
\end{eqnarray*}

Again, we denote by $R_T^{(d)} = \underset{t = 1.. T }{\sum} m(a_{\phi^*(t)}) - \underset{t = 1..T}{\sum} m(a_{\phi^d(t)})$ the discretization error. Moreover, we define $R_T^{(FMAB)} = \underset{t = 1.. T }{\sum} m(a_{\phi^d(t)}) - \underset{t = 1..T}{\sum} m(a_{\phi(t)})$ the regret of our strategy against the oracle strategy for the discrete problem.

As in the one-dimensional case, we use the following Lemmas to bound the discretization error $R_T^{(d)}$. 

\begin{lem}\label{lem:d-control_hat_f}
Define $\epsilon = \lceil c_{p,d} K^{d-1}\rceil$ and $\alpha = 4QL/c_{p,d} + 2/\sqrt{d} \times (1+3/K^{d-1})$, where $c_{p,d}$ is the constant appearing in Lemma \ref{lem:isoperimetrie}. With probability at least $1 - 4\exp\left(-\frac{2c_{p,d}^2N}{K^{2}}\right)$, we have $\vert\widehat{f} - f \vert \leq 1+ \epsilon$. On this event, $\left \vert m_{\hat{f}} - M \right \vert \leq \alpha \sqrt{d}L/K$ and $\left \vert m_{\hat{f}+1} - M \right \vert \leq \alpha \sqrt{d}L/K$.
\end{lem}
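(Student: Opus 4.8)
The plan is to split the statement into a probabilistic part, controlling $\widehat f$, and a deterministic part, converting the index bound into the value bounds on $m_{\widehat f}$ and $m_{\widehat f +1}$, exactly as in the one-dimensional Lemma~\ref{lem:control_hat_f}. The only genuinely new feature is the size of the window: in $d$ dimensions the level surface $\{m = M\}$ is $(d-1)$-dimensional and is met by $\Theta(K^{d-1})$ bins, so the sorted averages $m_k$ cross $M$ over a whole block of $\Theta(K^{d-1})$ consecutive indices. This is why the window is $1+\epsilon$ with $\epsilon = \lceil c_{p,d}K^{d-1}\rceil$ rather than the constant window of the one-dimensional case.

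For the probabilistic part, recall that the ordering of the bins by decreasing $m_k$ is deterministic, so $S_j := \sum_{k\le j} N_k = \#\{i : a_i \in B_1\cup\dots\cup B_j\}$ is, under Assumption~\ref{hyp:d-uniforme}, a $\mathrm{Bin}(N, j/K^d)$ variable with mean $jN/K^d$. Since $\widehat f = \max\{j : S_j < T\}$ and $T = pN$, one has $\{\widehat f \ge f+2+\epsilon\} = \{S_{f+2+\epsilon} < T\}$ and $\{\widehat f \le f-2-\epsilon\} = \{S_{f-1-\epsilon}\ge T\}$. Using $f=\lfloor pK^d\rfloor$ and $\epsilon \ge c_{p,d}K^{d-1}$, a direct computation shows that in either case the deviation of $S$ from its mean is at least $(1+c_{p,d}K^{d-1})N/K^d \ge c_{p,d}N/K$. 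Hoeffding's inequality then bounds each event by $\exp(-2c_{p,d}^2 N/K^2)$, and a union bound over the two tails gives $\mathbb P(|\widehat f - f| > 1+\epsilon) \le 4\exp(-2c_{p,d}^2 N/K^2)$. The window width $\epsilon$ is dictated precisely by the requirement that this deviation reach order $N/K$, which is what produces the exponent $N/K^2$.

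For the deterministic part, work on $\{|\widehat f - f|\le 1+\epsilon\}$; since $\{m_k\}$ is nonincreasing it suffices to prove $m_{f-1-\epsilon}\le M+\alpha\sqrt d L/K$ and $m_{f+2+\epsilon}\ge M-\alpha\sqrt d L/K$. Lemma~\ref{lem:d-lipschitz} shows that within any bin meeting the band $|m-M|\lesssim \sqrt dL/K$ the function $m$ fluctuates by at most $\rho := \sqrt d L/K$; hence $m_k > M+\eta$ forces $B_k\subseteq\{m> M+\eta-\rho\}$ and, conversely, $B_k\cap\{m\ge M-\eta+\rho\}\ne\emptyset$ forces $m_k\ge M-\eta$. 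These inclusions translate the counts $n_+(\eta):=\#\{k:m_k>M+\eta\}$ and $n_-(\eta):=\#\{k:m_k\ge M-\eta\}$ into Lebesgue measures of level sets of $m$. Taking $\eta=\alpha\rho$ and using $\lambda(\{m> M\})=\lambda(\{m\ge M\})=p$ (the set $\{m=M\}$ has measure zero by the margin condition), Lemma~\ref{lem:isoperimetrie} applied on both sides of $M$ lower-bounds the band measures by $c_{p,d}(\alpha-1)\sqrt d/K$, giving $n_+(\alpha\rho)\le pK^d - c_{p,d}(\alpha-1)\sqrt dK^{d-1}$ and $n_-(\alpha\rho)\ge pK^d + c_{p,d}(\alpha-1)\sqrt dK^{d-1}$. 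Comparing with $f=\lfloor pK^d\rfloor$ and $\epsilon=\lceil c_{p,d}K^{d-1}\rceil$ yields the two desired inequalities once $\alpha$ exceeds a fixed multiple of $1/\sqrt d$; the stated value of $\alpha$, carrying the extra term $4QL/c_{p,d}$, arises when one instead routes the comparison through $g:=\#\{k:m_k\ge M\}$ and bounds $|g-pK^d|\le Q\sqrt dLK^{d-1}$ by the margin condition (Assumption~\ref{hyp:beta}), so that the margin constant $Q$ governs the offset between $f$ and the true crossing. The same two-sided computation controls $m_{\widehat f+1}$.

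The main obstacle is the simultaneous reconciliation of three differently-defined indices: the random crossing $\widehat f$ (arm counts), the deterministic target $f=\lfloor pK^d\rfloor$ (bin counts), and the threshold $M$ (Lebesgue measure). The tension is that $\epsilon$ must be large enough for the concentration deviation to reach order $N/K$ — forcing $\epsilon\gtrsim c_{p,d}K^{d-1}$ — yet small enough that moving $\epsilon$ bins away from the crossing changes $m_k$ by only $O(\sqrt d L/K)$; both requirements are met exactly at window size $\Theta(K^{d-1})$, the number of bins straddling the level surface. Making this balance quantitative needs the Lipschitz fluctuation (Lemma~\ref{lem:d-lipschitz}) to pass between bin-averages and pointwise values, the isoperimetric lower bound (Lemma~\ref{lem:isoperimetrie}) to guarantee $m$ is non-degenerate near $M$ so the value bound stays $O(1/K)$, and the margin upper bound (Assumption~\ref{hyp:beta}) to keep the boundary block thin; handling the lower side of $M$, where Lemma~\ref{lem:isoperimetrie} is stated only above $M$, requires the symmetric version of that estimate.
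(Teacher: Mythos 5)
Your proposal follows essentially the same route as the paper's proof: Hoeffding's inequality applied to the binomial bin counts with the window $1+\epsilon$ calibrated so that the deviation reaches $c_{p,d}N/K$ (hence the exponent $N/K^2$), followed by a deterministic conversion of the index window into the value window using Lemma \ref{lem:d-lipschitz}, the isoperimetric lower bound of Lemma \ref{lem:isoperimetrie} (including the symmetric estimate below $M$, which you rightly flag and which the paper dispatches with a ``similarly''), and the margin condition to control the boundary block. One small caveat: your inclusion ``$m_k > M+\eta$ forces $B_k \subseteq \{m > M+\eta-\rho\}$'' overstates Lemma \ref{lem:d-lipschitz}, which for points well above $M$ only guarantees $\min_{a'\in B_k} m(a') \geq M + \eta/2$ (the weak-Lipschitz fluctuation scales with the distance to $M$ rather than being additively bounded by $\rho$); this affects only constants, and your hedging on the size of $\alpha$ absorbs it.
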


\begin{lem}\label{lem:d-control_hat_M}
For the constant $c_{p,d}>0$ defined in Lemma \ref{lem:isoperimetrie},
\begin{equation}\label{eq:bound_M_d_p}
    \mathbb{P}\left(\vert \widehat{M} - M \vert \leq \sqrt{d}L/K\right) \geq 1  - 2e^{-\frac{2c_{p,d}^2N}{K^2}}.
\end{equation}
\end{lem}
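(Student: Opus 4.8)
The plan is to mirror the one-dimensional argument behind Lemma~\ref{lem:control_hat_M}, replacing the elementary estimate on the measure of level bands near $M$ by the isoperimetric bound of Lemma~\ref{lem:isoperimetrie}. Recall that $\widehat{M} = m(a_{\phi^*(T)})$ is exactly the $T$-th largest value among $m(a_1), \ldots, m(a_N)$, with $T = pN$. First I would translate the two deviation events into statements about binomial counts. Set $S_+ = \sum_{i=1}^N \mathds{1}\{m(a_i) \geq M + \sqrt{d}L/K\}$ and $S_- = \sum_{i=1}^N \mathds{1}\{m(a_i) \geq M - \sqrt{d}L/K\}$. Since the $T$-th order statistic exceeds a level $\ell$ only if at least $T$ samples reach $\ell$, and falls strictly below $\ell$ only if at most $T-1$ samples reach $\ell$, one has $\{\widehat{M} > M + \sqrt{d}L/K\} \subseteq \{S_+ \geq T\}$ and $\{\widehat{M} < M - \sqrt{d}L/K\} \subseteq \{S_- \leq T-1\}$. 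Under Assumption~\ref{hyp:d-uniforme}, $S_+ \sim \mathrm{Bin}(N, q_+)$ and $S_- \sim \mathrm{Bin}(N, q_-)$ with $q_+ = \lambda(\{x : m(x) \geq M + \sqrt{d}L/K\})$ and $q_- = \lambda(\{x : m(x) \geq M - \sqrt{d}L/K\})$.

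The crux is to separate $q_+$ and $q_-$ from $p$ by a quantitative margin. By definition of $M$, the super-level set $\{m \geq M\}$ has measure $p$ (the $p$-quantile of $m$ under the uniform covariate law). Applying Lemma~\ref{lem:isoperimetrie} with $t = \sqrt{d}L/K \leq \sqrt{d}L$ gives $\lambda(\{m \in [M, M + \sqrt{d}L/K)\}) \geq c_{p,d}\sqrt{d}/K$, whence $q_+ \leq p - c_{p,d}\sqrt{d}/K$. The same argument applied below $M$ -- the weak Lipschitz control of $m$ at a point where $m = M$ holds on both sides, so the isoperimetric estimate is symmetric -- yields $\lambda(\{m \in [M - \sqrt{d}L/K, M)\}) \geq c_{p,d}\sqrt{d}/K$, and hence $q_- \geq p + c_{p,d}\sqrt{d}/K$. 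This two-sided margin -- in particular pinning down $\lambda(\{m \geq M\}) = p$ and obtaining the symmetric lower bound on the measure of the band just below $M$ -- is the step I expect to require the most care, since it is where the definition of $M$ and the geometry encoded in $c_{p,d}$ interact.

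Finally I would apply Hoeffding's inequality to each count. On the upper side, $T - \mathbb{E}[S_+] = pN - Nq_+ \geq c_{p,d}\sqrt{d}\,N/K \geq 0$, so $\mathbb{P}(S_+ \geq T) \leq \exp(-2(c_{p,d}\sqrt{d}\,N/K)^2/N) = \exp(-2c_{p,d}^2 d\,N/K^2)$. On the lower side, $\mathbb{E}[S_-] - (T-1) \geq c_{p,d}\sqrt{d}\,N/K$, giving the matching bound $\mathbb{P}(S_- \leq T-1) \leq \exp(-2c_{p,d}^2 d\,N/K^2)$. Since $d \geq 1$, each tail is at most $\exp(-2c_{p,d}^2 N/K^2)$, and a union bound over the two events yields $\mathbb{P}(|\widehat{M} - M| > \sqrt{d}L/K) \leq 2\exp(-2c_{p,d}^2 N/K^2)$, which is precisely \eqref{eq:bound_M_d_p}.
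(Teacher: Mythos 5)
Your proof is correct and takes essentially the same approach as the paper, which simply instructs the reader to rerun the one-dimensional argument of Lemma \ref{lem:control_hat_M} (order statistic $\to$ binomial count $\to$ Hoeffding) with Lemma \ref{lem:isoperimetrie} replacing the elementary band-measure bound. The only caveat---one you share with the paper, since Lemma \ref{lem:isoperimetrie} is stated only for the band above $M$---is that the symmetric isoperimetric argument for the band below $M$ uses the sub-level set of measure $1-p$ and hence strictly yields the constant $c_{1-p,d}$ rather than $c_{p,d}$, so the constant in the lemma must be read as the smaller of the two.
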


The proof of Lemma \ref{lem:d-control_hat_M} is obtained by following the lines of the proof of Lemma \ref{lem:control_hat_M}, and applying Lemma \ref{lem:isoperimetrie}. It is therefore omitted.

\begin{lem}\label{lem:d-ech_arms} With probability at least $1 - 2\exp\left(-\frac{8\alpha^2dL^2Q^2N}{K^2}\right)$, 
$$\left \vert \left\{i : m(a_i) \in \left[M - \frac{2\alpha\sqrt{d}L}{K}, M+\frac{ L\sqrt{d}}{K}\right]\right\}\right\vert \leq \frac{4\alpha \sqrt{d}LQ N}{K}.$$
\end{lem}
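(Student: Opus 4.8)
The plan is to mirror the one-dimensional argument of Lemma \ref{lem:ech_arms}, the only new ingredients being the $\sqrt{d}$ factors (which enter the interval radii through the bin geometry of Lemma \ref{lem:d-lipschitz}) and the fact that the per-arm probability is controlled directly by the margin condition. First I would introduce, for $i = 1, \dots, N$, the indicator variables $Z_i = \mathds{1}\left\{m(a_i) \in \left[M - \tfrac{2\alpha\sqrt{d}L}{K}, M + \tfrac{\sqrt{d}L}{K}\right]\right\}$ and set $S = \sum_{i=1}^N Z_i$, which is exactly the cardinality we wish to bound. Since the covariates $a_i$ are i.i.d.\ uniform on $[0,1]^d$ by Assumption \ref{hyp:d-uniforme}, the $Z_i$ are i.i.d.\ Bernoulli variables, and $\mathbb{E}[S] = N\,\mathbb{P}\!\left(m(a_1) \in \left[M - \tfrac{2\alpha\sqrt{d}L}{K}, M + \tfrac{\sqrt{d}L}{K}\right]\right)$.

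The next step is to bound the single-arm probability. Any $x$ with $m(x)$ in the interval satisfies $|M - m(x)| \leq \max(2\alpha, 1)\tfrac{\sqrt{d}L}{K}$; since $2\alpha \geq 1$ (which follows from the definition of $\alpha$ together with the inequality $QL \geq 1$), the relevant interval is contained in $\{x : |M - m(x)| \leq \tfrac{2\alpha\sqrt{d}L}{K}\}$. Applying the margin condition (Assumption \ref{hyp:beta}) with $\epsilon = 2\alpha\sqrt{d}L/K$, and using that $a_1$ is uniform so that probabilities coincide with Lebesgue measures, I obtain
\[
\mathbb{E}[Z_1] \;\leq\; \lambda\!\left(\left\{x : |M - m(x)| \leq \tfrac{2\alpha\sqrt{d}L}{K}\right\}\right) \;\leq\; \frac{2\alpha\sqrt{d}LQ}{K} \;=:\; q.
\]
Hence $\mathbb{E}[S] \leq Nq = 2\alpha\sqrt{d}LQN/K$, which is exactly half of the target threshold $4\alpha\sqrt{d}LQN/K = 2Nq$.

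For the concentration step, the key observation is that the target threshold is twice the upper bound on the mean, so $\{S \geq 2Nq\} \subseteq \{S - \mathbb{E}[S] \geq Nq\}$ (using $\mathbb{E}[S] \leq Nq$). Because each $Z_i \in \{0,1\}$, Hoeffding's inequality gives $\mathbb{P}(S - \mathbb{E}[S] \geq Nq) \leq \exp(-2(Nq)^2/N) = \exp(-2Nq^2)$, and the stated factor $2$ is inherited from writing Hoeffding in its two-sided form. Substituting $q = 2\alpha\sqrt{d}LQ/K$ yields $2Nq^2 = 8\alpha^2 d L^2 Q^2 N/K^2$, so $\mathbb{P}\!\left(S > \tfrac{4\alpha\sqrt{d}LQN}{K}\right) \leq 2\exp\!\left(-\tfrac{8\alpha^2 dL^2Q^2N}{K^2}\right)$, which is precisely the complement of the claimed event.

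This lemma is largely routine, so rather than a genuine obstacle the main point requiring care is the reconciliation of the deterministic threshold $2Nq$ with the random sum $S$ whose mean is \emph{not} known exactly and could be far smaller than the margin upper bound $Nq$. This is handled cleanly by the containment $\{S \geq 2Nq\} \subseteq \{S - \mathbb{E}[S] \geq Nq\}$, which converts a fixed-threshold tail event into a deviation-from-the-mean event of guaranteed size $Nq$ regardless of the true value of $\mathbb{E}[S]$; one could equivalently invoke stochastic domination by a $\mathrm{Bin}(N, q)$ variable. The remaining verification that $2\alpha \geq 1$ for the interval-containment step is immediate from the explicit formula for $\alpha$ in Lemma \ref{lem:d-control_hat_f} and the standing relation $QL \geq 1$.
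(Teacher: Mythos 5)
Your proposal is correct and takes essentially the same route as the paper's (omitted) proof, which refers back to the one-dimensional Lemma \ref{lem:ech_arms}: contain the set in $\{x : \vert M - m(x)\vert \leq 2\alpha\sqrt{d}L/K\}$ (valid since $\alpha \geq 4QL/c_{p,d} \geq 4$, so $2\alpha \geq 1$), bound its measure by the margin condition, and concentrate the resulting i.i.d.\ Bernoulli count around a threshold equal to twice the mean bound. The only difference is that the one-dimensional argument invokes Bernstein's inequality, whereas you use Hoeffding — and your choice is in fact the one that reproduces the exact exponent $8\alpha^2 dL^2Q^2N/K^2$ appearing in the statement.
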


The proof of Lemma \ref{lem:d-ech_arms} follows from the arguments developed in the proof of Lemma \ref{lem:ech_arms}, and is therefore omitted. Note that since $LQ \geq 1$, $d\geq 1$ and $\alpha \geq1 \geq c_{p,d}$, $1 - 2\exp\left(-\frac{8\alpha^2dL^2Q^2N}{K^2}\right) \geq 1 - 2\exp\left(-\frac{2c_{p,d}^2N}{K^2}\right)$.

\begin{lem}\label{lem:d-E_a}
Let
\begin{eqnarray*}\label{lem:controle_R1}
    \mathcal{E}_{a} & =& \left\{ \vert\widehat{f} - f \vert \leq 1 + \epsilon \right\}\cap \left\{\vert \widehat{M} - M \vert \leq \sqrt{d}L/K \right\} \\
    &\cap& \left\{ \left \vert \left\{i : m(a_i) \in \left[M - \frac{2\alpha\sqrt{d}L}{K}, M+\frac{\sqrt{d}L}{K}\right]\right\}\right\vert \leq \frac{4\alpha\sqrt{d}LQ N}{K}\right\}.
\end{eqnarray*}
On the event $\mathcal{E}_{a}$, $R_T^{(d)} \leq \frac{8\alpha^2dQL^2N}{K^2}$.
\end{lem}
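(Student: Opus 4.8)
The plan is to mimic the proof of Lemma \ref{lem:E_a}, exploiting that the two oracles $\phi^*$ and $\phi^d$ pull the same $T$ arms except on a thin boundary layer around the threshold $M$. Writing $\Phi^*(T)$ and $\Phi^d(T)$ for the sets of arms pulled by the two strategies, both of cardinality $T$, I would first record the identity
\[
R_T^{(d)} = \sum_{a_i \in \Phi^*(T)\setminus\Phi^d(T)} m(a_i) - \sum_{a_j \in \Phi^d(T)\setminus\Phi^*(T)} m(a_j),
\]
and note that, since $|\Phi^*(T)| = |\Phi^d(T)| = T$, the two index sets share the same cardinality, say $D$; moreover $R_T^{(d)}\ge 0$ because $\phi^*$ pulls the $T$ arms of largest reward. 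The whole argument then reduces to (i) bounding $D$ and (ii) bounding the per-arm deviation $|m(a)-M|$ on the symmetric difference.

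For step (ii) I would show that every arm in $\Phi^*(T)\setminus\Phi^d(T)$ and $\Phi^d(T)\setminus\Phi^*(T)$ has reward within $O(\alpha\sqrt d\,L/K)$ of $M$. An arm $a_j\in\Phi^d(T)\setminus\Phi^*(T)$ is not among the top-$T$ arms, so $m(a_j)\le\widehat M\le M+\sqrt d\,L/K$ on $\mathcal{E}_a$ by Lemma \ref{lem:d-control_hat_M}; it also lies in a bin $B_k$ pulled by the discrete oracle, i.e. with $m_k\ge m_{\widehat f+1}\ge M-\alpha\sqrt d\,L/K$ by Lemma \ref{lem:d-control_hat_f}, so applying Lemma \ref{lem:d-lipschitz} to a point of $B_k$ whose reward exceeds the bin mean yields $m(a_j)\ge M-2\alpha\sqrt d\,L/K$. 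Symmetrically, an arm $a_i\in\Phi^*(T)\setminus\Phi^d(T)$ is among the top-$T$ arms, hence $m(a_i)\ge\widehat M\ge M-\sqrt d\,L/K$, while it sits in a bin with $m_k\le m_{\widehat f+1}\le M+\alpha\sqrt d\,L/K$; Lemma \ref{lem:d-lipschitz} applied to a point below the bin mean then forces $m(a_i)\le M+2\alpha\sqrt d\,L/K$. The boundary bin $B_{\widehat f+1}$ is covered directly, since $|m_{\widehat f+1}-M|\le\alpha\sqrt d\,L/K$ places all of its arms inside the same layer. Consequently every arm of the symmetric difference satisfies $|m(a)-M|\le 2\alpha\sqrt d\,L/K$.

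For step (i), the previous paragraph shows in particular that each arm of $\Phi^d(T)\setminus\Phi^*(T)$ has reward in $[M-2\alpha\sqrt d\,L/K,\,M+\sqrt d\,L/K]$, so $D$ is at most the number of arms in that slab, which the third event defining $\mathcal{E}_a$ (together with Lemma \ref{lem:d-ech_arms}) bounds by $4\alpha\sqrt d\,LQN/K$. Pairing the $D$ arms of $\Phi^*(T)\setminus\Phi^d(T)$ arbitrarily with those of $\Phi^d(T)\setminus\Phi^*(T)$ and using the two-sided estimate just proved gives $m(a_i)-m(a_j)\le 4\alpha\sqrt d\,L/K$ for each pair, whence
\[
R_T^{(d)} \le D\cdot\frac{4\alpha\sqrt d\,L}{K} \le \frac{4\alpha\sqrt d\,LQN}{K}\cdot\frac{4\alpha\sqrt d\,L}{K} = \frac{16\alpha^2 d\,L^2 Q N}{K^2},
\]
and tightening the numerical constants exactly as in Lemma \ref{lem:E_a} produces the stated bound $8\alpha^2 d Q L^2 N/K^2$. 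I expect the main obstacle to be step (ii): the weak Lipschitz condition \ref{hyp:d-lip} is asymmetric, so passing from a bound on the bin mean $m_k$ to a bound on an individual arm's reward must go through a correctly chosen reference point (a point above the mean for lower bounds, below the mean for upper bounds) and through the precise $\sqrt d$-dependent constants of Lemma \ref{lem:d-lipschitz}; keeping track of which arms fall in the exact slab counted by $\mathcal{E}_a$ versus the slightly wider layer controlled by the Lipschitz estimate is the delicate bookkeeping that pins down the final constant.
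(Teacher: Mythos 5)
Your decomposition of $R_T^{(d)}$ over the symmetric difference $\Phi^*(T)\,\triangle\,\Phi^d(T)$, your counting of that difference via the slab event in $\mathcal{E}_a$, and your per-arm estimates via Lemma \ref{lem:d-lipschitz} are exactly the paper's argument; up to the last display your proof and the paper's coincide. The deficiency is in the final constant, and it is not merely cosmetic bookkeeping. You bound the two sides of a swapped pair \emph{separately}, $m(a_i)\le M+2\alpha\sqrt{d}L/K$ and $m(a_j)\ge M-2\alpha\sqrt{d}L/K$, and add, which gives a per-pair cost of $4\alpha\sqrt{d}L/K$ and hence $16\alpha^2 dQL^2N/K^2$ --- twice the stated bound. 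Your closing claim that the constants can be ``tightened exactly as in Lemma \ref{lem:E_a}'' cannot be realized within this additive scheme: each of your two one-sided bounds is individually attainable (an arm in a bin with mean just above $m_{\widehat{f}+1}$ really can sit $2\alpha\sqrt{d}L/K$ below $M$ when $m_{\widehat{f}+1}$ is at the bottom of its allowed range, and symmetrically), so no refinement of the two estimates taken separately will beat the factor $4$.

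What the paper actually does, both in the one-dimensional Lemma \ref{lem:E_a} and in its $d$-dimensional analogue, is a \emph{joint} estimate exploiting that the two deviations are coupled through the single threshold $m_{\widehat{f}+1}$: writing $m_{\widehat{f}+1}=M+\gamma L/K$ with $\vert\gamma\vert\le\alpha\sqrt{d}$, if $\gamma\ge 0$ then any arm in a bin with mean $\ge m_{\widehat{f}+1}$ satisfies $m(a_i)\ge M-\sqrt{d}L/K$ (by Lemma \ref{lem:d-lipschitz} it cannot sit far below $M$ when its bin mean is above $M$), while the overshoot $m(a_j)-M$ of an arm in a bin with mean $\le m_{\widehat{f}+1}$ is at most $\max\{2\gamma,\gamma+\sqrt{d}\}\,L/K$; the symmetric bounds hold when $\gamma<0$. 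In every case the \emph{sum} of the two one-sided deviations is at most $\left(2\vert\gamma\vert \lor 2\sqrt{d}\right)L/K\le 2\alpha\sqrt{d}L/K$ (using $\alpha\ge 1$), which is the factor of $2$ you are missing. So: right approach and correct up to a constant, but the step you deferred is precisely the one nontrivial point of the proof --- it requires the case analysis on the sign of $m_{\widehat{f}+1}-M$ performed in the proof of Lemma \ref{lem:E_a}, not a tightening of your separate per-arm estimates.
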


Combing Lemmas \ref{lem:d-control_hat_f}, \ref{lem:d-control_hat_M} and \ref{lem:d-ech_arms}, we note that $\mathbb{P}(\mathcal{E}_a) \geq 1 - 8 \exp(\frac{2c_{p,d}^2N}{K^2})$. Next, we bound the cost of learning on the corresponding finite $K^d$-armed bandit problem. Similarly to the one-dimensional case, we use the following Lemmas to control this term.

\begin{lem}\label{lem:d-equi_nk}
\begin{eqnarray*}
\mathbb{P}\left(\underset{k \in \{0,..,K^d-1\}}{\max} \left \vert N_k - \frac{N}{K^d} \right \vert \geq \frac{N}{2K^d}\right) \leq   2K^de^{-\frac{N}{10K^d}}.
\end{eqnarray*}
\end{lem}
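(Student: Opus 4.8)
The plan is to bound the maximal deviation of the bin‑occupancy counts $N_k$ from their common mean $N/K^d$ by combining a one‑sided concentration inequality with a union bound over the $K^d$ bins. This is the $d$‑dimensional analogue of Lemma \ref{lem:equi_nk}, so I would reuse the same skeleton, adjusting only the exponents that depend on the number of bins.

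First I would fix a bin $B_k$ and observe that, under Assumption \ref{hyp:d-uniforme}, $N_k = \sum_{i=1}^N \mathds{1}\{a_i \in B_k\}$ is a sum of $N$ i.i.d.\ Bernoulli random variables, since each $a_i$ lands in $B_k$ independently with probability $\lambda(B_k) = K^{-d}$ (the bins partition $[0,1]^d$ into $K^d$ cubes of equal volume). Hence $N_k \sim \mathrm{Binomial}(N, K^{-d})$ with $\mathbb{E}[N_k] = N/K^d$. I would then apply a multiplicative (Chernoff/Bernstein‑type) bound to control $\bigl\vert N_k - N/K^d\bigr\vert$ at the level $N/(2K^d)$, i.e.\ a relative deviation of $1/2$. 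The multiplicative Chernoff bound gives a tail of the form $2\exp\bigl(-\tfrac{1}{3}\cdot\tfrac14\cdot\tfrac{N}{K^d}\bigr)=2\exp(-N/(12K^d))$ for the two‑sided event, which must be relaxed slightly to match the stated constant $\exp(-N/(10K^d))$; this is just a matter of tracking the constant in the lower‑ and upper‑tail Chernoff estimates and taking the worse of the two.

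Having controlled a single bin, I would close the argument with a union bound over the $K^d$ bins:
\begin{equation*}
\mathbb{P}\!\left(\underset{k}{\max}\,\Bigl\vert N_k - \tfrac{N}{K^d}\Bigr\vert \geq \tfrac{N}{2K^d}\right) \leq \sum_{k=0}^{K^d-1}\mathbb{P}\!\left(\Bigl\vert N_k - \tfrac{N}{K^d}\Bigr\vert \geq \tfrac{N}{2K^d}\right)\leq 2K^d e^{-\frac{N}{10K^d}},
\end{equation*}
which is exactly the stated bound. The only delicate point is the form of the exponent: the one‑dimensional Lemma \ref{lem:equi_nk} had $\mathbb{E}[N_k]=N/K$ and an exponent $N^{1/3}/3$, reflecting the specific range $K\leq N^{2/3}/4$, whereas here the natural exponent is $N/(10K^d)$ with $\mathbb{E}[N_k]=N/K^d$; so I would be careful to state the concentration with respect to the correct mean $N/K^d$ rather than carrying over the one‑dimensional form verbatim.

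The main obstacle I anticipate is purely bookkeeping of constants: reconciling the constant $3$ or $4$ in the exponent of the standard multiplicative Chernoff bound with the factor $10$ claimed in the statement, and verifying that the lower‑tail bound (which for Binomials is typically sharper) does not force a smaller constant than the upper‑tail bound. There is no conceptual difficulty—the independence and exact binomial structure of $N_k$ are immediate from Assumption \ref{hyp:d-uniforme}—so the proof reduces to a clean application of a concentration inequality followed by a union bound, mirroring Lemma \ref{lem:equi_nk} with $K$ replaced by $K^d$ throughout.
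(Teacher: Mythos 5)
Your approach is essentially the paper's: the paper also treats $N_k$ as a sum of i.i.d.\ Bernoulli$(K^{-d})$ variables, applies a concentration inequality (Bernstein's, in its case), and finishes with a union bound over the $K^d$ bins. One caveat on the constant, which you flag but resolve in the wrong direction: the weak multiplicative Chernoff form $\exp(-\delta^2\mu/3)$ gives $2\exp(-N/(12K^d))$, which is \emph{larger} than the claimed $2\exp(-N/(10K^d))$, so it cannot be ``relaxed'' into the statement --- you genuinely need a sharper inequality, not a looser one. This is exactly what the paper's choice of Bernstein provides: with $t = N/(2K^d)$ and variance at most $NK^{-d}$, the exponent is $\frac{t^2}{2NK^{-d}+2t/3} = \frac{3N}{28K^d} \geq \frac{N}{10K^d}$. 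Equivalently, on the Chernoff side, the form $\mathbb{P}(X \geq (1+\delta)\mu) \leq \exp\left(-\frac{\delta^2\mu}{2+\delta}\right)$ with $\delta = 1/2$ gives exactly $\exp(-\mu/10)$, and the lower tail $\exp(-\delta^2\mu/2) = \exp(-\mu/8)$ is even smaller, so your plan of ``taking the worse of the two tails'' does close the argument --- but only with these sharper estimates, not with the constant-$3$ version you quoted.
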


\begin{lem}\label{lem:d-borne_m} 
\begin{eqnarray*}
\mathbb{P}\left( \exists k \in \{0,...,K^d-1\}, s \leq (N_k\land T) : \left\vert \widehat{m}_k(s) - m_k \right\vert \geq \sqrt{\frac{\log(T/\delta)}{2s}}\right) &\leq& 2K^d\delta.
\end{eqnarray*}
\end{lem}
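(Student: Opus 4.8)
The plan is to mirror the proof of Lemma \ref{lem:borne_m}, replacing the $K$ intervals by the $K^d$ bins $B_k$; the dimension enters only through the number of bins, so the argument is structurally unchanged. First I would decouple the within-bin selection from the rewards. For each bin $B_k$, generate ahead of time a uniformly random permutation $\sigma_k$ of the $N_k$ arms it contains, independently of all rewards and of the other bins, and agree that whenever $d$-UCBF decides to pull from $B_k$ it pulls the next arm prescribed by $\sigma_k$. Since pulling the next element of a uniform random permutation of the remaining arms is distributionally identical to pulling a uniformly random remaining arm, this is a valid coupling of the algorithm. It lets me define $\pi_k(s)$ and hence $\widehat{m}_k(s)=\frac{1}{s}\sum_{i=1}^{s} y_{\pi_k(i)}$ for every $s \le N_k$ (not merely for $s \le n_k(T)$), using a permutation that is independent of the observed rewards. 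The event in the statement is then measurable with respect to these completed quantities and no longer refers to the adaptive stopping value $n_k(T)$.

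Next I would establish the key distributional fact: conditionally on $N_k=n$, the observed rewards $y_{\pi_k(1)},\dots,y_{\pi_k(n)}$ are i.i.d., each bounded in $[0,1]$ with mean exactly $m_k$. Under Assumption \ref{hyp:d-uniforme}, conditionally on $N_k=n$ the $n$ arm positions falling in $B_k$ are i.i.d. uniform on $B_k$; reordering them by the independent permutation $\sigma_k$ leaves this joint law unchanged, so the pairs $(a_{\pi_k(i)},y_{\pi_k(i)})$ (draw a position uniform on $B_k$, then draw its reward) are i.i.d. Consequently each $y_{\pi_k(i)}$ has mean $\mathbb{E}[m(a)\mid a\in B_k]=K^d\int_{B_k}m(a)\,da=m_k$ and lies in $[0,1]$. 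This is the step where care is needed, since it is what guarantees the centering constant is the target $m_k$ rather than a data-dependent within-bin average; the decoupling of the previous paragraph is exactly what removes the dependence between the selection order and the rewards that would otherwise break the i.i.d. structure.

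With the i.i.d. structure in hand the tail bound is routine. For a fixed $k$ with $N_k=n$ and a fixed $s\le n\wedge T$, Hoeffding's inequality for bounded i.i.d. variables gives
\begin{equation*}
\mathbb{P}\!\left(\left\vert \widehat{m}_k(s)-m_k\right\vert \ge \sqrt{\tfrac{\log(T/\delta)}{2s}}\,\Big\vert\, N_k=n\right)\le 2\exp\!\left(-2s\cdot\tfrac{\log(T/\delta)}{2s}\right)=\tfrac{2\delta}{T}.
\end{equation*}
A union bound over the at most $T$ admissible values of $s$ (since $s\le N_k\wedge T\le T$) yields a conditional failure probability at most $2\delta$ for each bin, and integrating over $N_k$ preserves this bound. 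A final union bound over the $K^d$ bins gives the claimed $2K^d\delta$, completing the proof. The only genuine obstacle is the decoupling and independence argument of the first two paragraphs; once $\widehat{m}_k(s)$ is recognized as an empirical mean of i.i.d. $[0,1]$ variables centered at $m_k$, everything else is the same Hoeffding-plus-union-bound computation as in Lemma \ref{lem:borne_m}.
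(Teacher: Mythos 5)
Your proposal is correct and follows essentially the same route as the paper: the paper also extends $\pi_k(s)$ beyond $n_k(T)$ by completing the pull order with a uniform random selection of the remaining arms (your pre-drawn permutation $\sigma_k$ is just a more explicit phrasing of this decoupling), conditions on $N_k=n$ to get i.i.d.\ rewards with mean $m_k$, and then applies Hoeffding, a union bound over $s \leq n\land T$, integration over $n$, and a union bound over the $K^d$ bins. The paper simply states that the proof of Lemma \ref{lem:borne_m} carries over verbatim with $K$ replaced by $K^d$, which is exactly what you have verified.
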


The proof of Lemma \ref{lem:d-borne_m} follows closely the proof of Lemma \ref{lem:borne_m}, and is therefore omitted.

Now, we define
\begin{eqnarray*}
\mathcal{E}_b = && \left\{\underset{k = 0,..,K^d-1}{\cap} \left\{N_k \in \left[\frac{N}{2K^d}, \frac{3N}{2K^d}\right]\right\}\right\}\\
&& \cap \left\{\underset{k = 0,..,K^d-1}{\cap} \underset{s = 1..(N_k\land T)}{\cap} \left\{ \vert m_k - \widehat{m}_k(s)\vert \leq \sqrt{\frac{\log(T/\delta)}{2s}} \right\}\right\}.
\end{eqnarray*}

Combining Lemma \ref{lem:d-equi_nk} and Lemma \ref{lem:borne_m}, we find that $$\mathbb{P}\left( \mathcal{E}_b\right) \geq 1 - 2K^de^{-\frac{N}{10K^d}} - 2K^d\delta.$$
For two bins $k,l \in \{1,...,K\}^2$, we  denote henceforth $\Delta_{k,l} = m_k - m_l$.
\begin{lem}\label{lem:d-bound_nl}
Let $k\in \{1,...,K\}$. On the event $\mathcal{E}_b \cap \{n_k(T) < N_k \}$, a.s. for all bins $B_l$ such that $\Delta_{k,l} > 0$, $n_l(T) \leq \frac{3\log(T/\delta)}{\Delta_{k,l}^2}$.
\end{lem}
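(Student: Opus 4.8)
The plan is to reproduce the argument used for the one-dimensional analogue, Lemma \ref{lem:bound_nl}, since the selection rule of $d$-UCBF is formally identical to that of UCBF once intervals $I_k$ are replaced by bins $B_k$. The proof is a standard UCB deviation argument carried out on the concentration event $\mathcal{E}_b$.

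First I would exploit the hypothesis $n_k(T) < N_k$. Since $d$-UCBF removes a bin from the set of alive bins only when it becomes empty, $n_k(T) < N_k$ guarantees that $B_k$ still holds at least one arm at time $T$, hence it was alive and eligible for selection at every round $t \leq T$. Next, fix any round $t$ at which $d$-UCBF pulls a bin $B_l$ with $\Delta_{k,l} = m_k - m_l > 0$. Because $B_k$ was alive at time $t$ and $d$-UCBF selects among alive bins the one maximising its index, the index of $B_l$ must dominate that of $B_k$:
\begin{equation*}
\widehat{m}_l(n_l(t-1)) + \sqrt{\frac{\log(T/\delta)}{2 n_l(t-1)}} \ \geq\ \widehat{m}_k(n_k(t-1)) + \sqrt{\frac{\log(T/\delta)}{2 n_k(t-1)}}.
\end{equation*}
On $\mathcal{E}_b$ the empirical means satisfy $|\widehat{m}_j(s) - m_j| \leq \sqrt{\log(T/\delta)/(2s)}$ for every bin $j$ and every $s \leq N_j \land T$. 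Applying this to $B_k$ lower-bounds its index by $m_k$, while applying it to $B_l$ upper-bounds its index by $m_l + 2\sqrt{\log(T/\delta)/(2 n_l(t-1))}$. Combining the two estimates with the displayed inequality yields $2\sqrt{\log(T/\delta)/(2 n_l(t-1))} \geq \Delta_{k,l}$, that is, $n_l(t-1) \leq 2\log(T/\delta)/\Delta_{k,l}^2$.

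Finally I would specialise this inequality to the last round at which $B_l$ is pulled, where $n_l(t-1) = n_l(T) - 1$, obtaining $n_l(T) \leq 1 + 2\log(T/\delta)/\Delta_{k,l}^2$. Since the rewards lie in $[0,1]$ we have $\Delta_{k,l} \leq 1$, and $\log(T/\delta) \geq 1$ for the chosen $\delta = N^{-(2d+2)/(d+2)}$, so $1 \leq \log(T/\delta)/\Delta_{k,l}^2$; the bound then simplifies to $n_l(T) \leq 3\log(T/\delta)/\Delta_{k,l}^2$, as claimed.

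The argument presents no genuine difficulty beyond the one-dimensional case, as the geometry of the bins $B_k$ enters only through the quantities $m_k$ and $N_k$ already fixed by the discretisation. The one point requiring care is verifying that $\mathcal{E}_b$ indeed controls every empirical mean invoked by the algorithm; this holds because the index of a bin $j$ evaluated at time $t$ uses $n_j(t-1) \leq N_j \land T$ samples, which is exactly the range covered by the concentration guarantee in the definition of $\mathcal{E}_b$.
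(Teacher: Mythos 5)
Your proof is correct and takes the same route as the paper: the paper omits this proof, pointing to the one-dimensional Lemma \ref{lem:bound_nl}, whose argument you reproduce essentially verbatim (aliveness of $B_k$ from $n_k(T)<N_k$, index comparison at any round where $B_l$ is pulled, concentration on $\mathcal{E}_b$, and the plus-one at the last pull, absorbed using $\Delta_{k,l}\leq 1$ and $\log(T/\delta)\geq 1$). The only detail you gloss over is that the index comparison is valid only for pulls after the initialisation phase ($t>K^d$), but this is harmless since a bin pulled only at initialisation has $n_l(T)=1$, which trivially satisfies the claimed bound.
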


The proof of Lemma \ref{lem:d-bound_nl} can be obtained by following the lines of the proof of Lemma \ref{lem:equi_nk}, and is therefore omitted. 

As in the one-dimensional case, we write $R_T^{(FMAB)} = R_{opt} + R_{\hat{f}+1} + R_{subopt}$, where 
$$R_{\hat{f}+1} = \underset{\ a_i \in B_{\hat{f}+1} \cap \Phi^d(T) \cap \overline{\Phi(T)}}{\sum} \left(m(a_{i}) - M\right) \nonumber  +\underset{\ a_i \in B_{\hat{f}+1} \cap \Phi(T) \cap \overline{\Phi^d(T)}}{\sum} \left(M-m(a_{i}) \right),$$
$$R_{opt} = \underset{k = 1..\widehat{f}}{\sum}\  \underset{a_i \in B_k \cap \overline{\Phi(T)}}{\sum} \left(m(a_{i}) - M\right),$$
and $$R_{subopt} = \underset{k = \widehat{f}+2 .. K^d-1}{\sum}\ \underset{a_i \in B_k \cap \Phi(T)}{\sum} \left(M  - m(a_i)\right).$$

The term $R_{\hat{f}+1}$ can easily be bounded : there are most $1.5N/K^d$ arms in $B_{\hat{f}+1}$, and so there are at most $1.5N/K^d$ terms in $R_{\hat{f}+1}$. On the event $\mathcal{E}_a$, $m_{\hat{f}+1} \in [M-\alpha \sqrt{d}L/K, M+\alpha \sqrt{d}L/K]$. Using Lemma \ref{lem:lipschitz}, we see that for each arm $a_i \in B_{\hat{f}+1}$, $\vert m(a_i) - M\vert\leq 2\alpha \sqrt{d}L/K$. Thus, on $\mathcal{E}_a \cap \mathcal{E}_b$, $R_{\hat{f}+1} \leq 3\alpha \sqrt{d}LN/K^{d+1}$.

The following Lemmas help us bound the terms $R_{subopt}$ and $R_{opt}$.

\begin{lem}\label{lem:d-R_subopt}
On the event $\mathcal{E}_a \cap \mathcal{E}_b$, 
\begin{equation*}
    R_{subopt} \leq  120\alpha^2dL^2Q\left(\frac{N}{K^2} + \frac{K^{d}\log(T/\delta)\log_2(K/\alpha\sqrt{d}L)}{\alpha^2dL^2}\right).
\end{equation*}
\end{lem}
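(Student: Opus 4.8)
The plan is to mirror closely the one-dimensional proof of Lemma~\ref{lem:R_subopt}, splitting the sub-optimal bins into those that are only slightly sub-optimal (gap of order $\sqrt{d}L/K$) and those that are more strongly sub-optimal, and treating each group separately. First I would recall the decomposition $R_{subopt} = \sum_{k=\hat f+2}^{K^d-1}\sum_{a_i\in B_k\cap\Phi(T)}(M-m(a_i))$. On the event $\mathcal{E}_a\cap\mathcal{E}_b$, every bin $B_k$ with $k\geq \hat f+2$ has $m_k\leq m_{\hat f+1}\leq M+\alpha\sqrt d L/K$, so via Lemma~\ref{lem:d-lipschitz} each arm in such a bin is sub-optimal by at most a controlled gap; the point is to organise the summation by gap size.

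For the slightly sub-optimal bins I would say: a bin $B_k$ whose mean reward satisfies $M-m_k\leq c\,\sqrt d L/K$ for an appropriate constant $c$ contributes at most $N_k\le 3N/(2K^d)$ terms, each of size $O(\sqrt d L/K)$, so its total contribution is $O(\sqrt d L\,N/K^{d+1})$. The number of such bins is controlled by the margin assumption (Assumption~\ref{hyp:beta}): since $\lambda(\{x:|M-m(x)|\le c\sqrt dL/K\})\le Q\,c\sqrt dL/K$ and each bin has volume $K^{-d}$, there are $O(Q\sqrt dL\,K^{d-1})$ such bins. Multiplying, their aggregate contribution is $O(Q d L^2\, N/K^{2})$, which matches the first term in the claimed bound up to the constant $120\alpha^2$. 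This step uses the same reasoning as Remark~\ref{rem:finiteness} transposed to $d$ dimensions.

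For the strongly sub-optimal bins I would invoke Lemma~\ref{lem:d-bound_nl}: because some optimal bin $I_k$ (in fact $B_{\hat f}$ or better) is alive whenever $\phi$ selects a sub-optimal arm, any bin $B_l$ with gap $\Delta_{k,l}>0$ is pulled at most $n_l(T)\le 3\log(T/\delta)/\Delta_{k,l}^2$ times, so it contributes at most $\Delta_{k,l}\cdot 3\log(T/\delta)/\Delta_{k,l}^2 = 3\log(T/\delta)/\Delta_{k,l}$ to $R_{subopt}$. I would then bucket the bins dyadically by gap, $\Delta_{k,l}\in[2^{j}\sqrt d L/K, 2^{j+1}\sqrt d L/K)$ for $j=0,1,\dots$; the margin condition bounds the number of bins in bucket $j$ by $O(Q\sqrt dL\,2^{j}K^{d-1})$, while each contributes $O(\log(T/\delta)K/(2^{j}\sqrt dL))$, so the bucket total is $O(Q K^{d}\log(T/\delta))$ independent of $j$. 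Summing over the $O(\log_2(K/(\alpha\sqrt dL)))$ nonempty buckets (the gaps are bounded above by a constant and below by $\Omega(\sqrt dL/K)$ for the strongly sub-optimal bins) produces the second term $O(Q K^{d}\log(T/\delta)\log_2(K/(\alpha\sqrt dL)))$.

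The main obstacle I anticipate is book-keeping the constants and the precise range of the dyadic buckets so that the slightly- and strongly-sub-optimal regimes patch together cleanly at the threshold gap $\alpha\sqrt dL/K$, and ensuring the $\sqrt d$ factors from Lemma~\ref{lem:d-lipschitz} are tracked consistently (they enter both the per-arm gap and, through the margin volume $Q\sqrt dL/K$, the bin counts, producing the overall $d$ and $\alpha^2d$ dependence). The genuinely new ingredient relative to dimension one is simply that a bin has volume $K^{-d}$ rather than $K^{-1}$, so the margin condition yields $K^{d-1}$ bins per gap scale instead of a constant; everything else is a transcription of Lemma~\ref{lem:R_subopt}, which is why I would present the argument concisely and defer the routine constant-chasing.
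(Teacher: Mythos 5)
Your proposal is correct and follows essentially the same route as the paper's own proof: the paper likewise splits the sub-optimal bins into a near-threshold class $\mathcal{S}_0$ (gap at most $2\alpha\sqrt{d}L/K$), bounded by the margin-based bin count $O(\alpha\sqrt{d}LQK^{d-1})$ times the per-bin arm count $3N/(2K^d)$ times the per-arm gap, and dyadic classes $\mathcal{S}_n$ handled via Lemma \ref{lem:d-bound_nl} (using the alive bin with $m_k \geq m_{\hat{f}+1}$ guaranteed by the definition of $\widehat{f}$) together with the margin count $|\mathcal{S}_n| = O(2^{n}\alpha\sqrt{d}LQK^{d-1})$, so that each bucket contributes $O(QK^{d}\log(T/\delta))$ and there are at most $\log_2(K/\alpha\sqrt{d}L)$ nonempty buckets. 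The constant-chasing you defer indeed goes through: the paper's own derivation yields $24\alpha^2 dL^2QN/K^2 + 96QK^{d}\log(T/\delta)\log_2(K/\alpha\sqrt{d}L)$, comfortably within the stated constant $120$.
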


\begin{lem}\label{lem:d-n_subopt}
On $\mathcal{E}_a \cap \mathcal{E}_b$, the number of arms pulled in sub-optimal bins by UCBF is bounded by $6\alpha\sqrt{d}LQN/K + 24 \log(T/\delta)K^{d+1}Q/(\alpha\sqrt{d}L)$.
\end{lem}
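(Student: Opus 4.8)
The plan is to mirror the proof of Lemma~\ref{lem:d-R_subopt} (and its one-dimensional analogue Lemma~\ref{lem:n_subopt}), but to track the raw count of pulls $\sum_{l} n_l(T)$ over sub-optimal bins $B_l$ rather than the gap-weighted sum. I work throughout on $\mathcal{E}_a\cap\mathcal{E}_b$ and write $\Delta_l = M - m_l$.

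First I dispose of a degenerate case, which is the structurally important step since Lemma~\ref{lem:d-bound_nl} only controls $n_l(T)$ relative to a bin that is still alive. Suppose every optimal bin $B_1,\dots,B_{\hat f}$ is exhausted at time $T$. Then UCBF has spent $\sum_{k\le \hat f} N_k$ pulls on them, so the number of pulls in all other bins is $T - \sum_{k\le \hat f} N_k \le N_{\hat f+1} \le 3N/(2K^d)$, where the first inequality is the defining property of $\hat f$ and the second uses $\mathcal{E}_b$. Since $\alpha,\sqrt d, LQ \ge 1$, this is already dominated by the first term $6\alpha\sqrt d LQ N/K$, so the claim holds in this case.

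It remains to treat the case where at least one optimal bin $B_{k^\ast}$ survives, i.e. $n_{k^\ast}(T) < N_{k^\ast}$ with $m_{k^\ast} \ge m_{\hat f}$. I split the sub-optimal bins at a threshold $\Delta_0$, a fixed multiple of $\alpha\sqrt d L/K$. For the slightly sub-optimal bins ($\Delta_l \le \Delta_0$) I use $n_l(T) \le N_l \le 3N/(2K^d)$ and count them: applying Lemma~\ref{lem:d-lipschitz} to $m_l$ shows every arm of such a bin has $\lvert M - m(a_i)\rvert = O(\alpha\sqrt d L/K)$, so these bins lie inside $\{x : \lvert M - m(x)\rvert \le C\alpha\sqrt d L/K\}$, whose measure is at most $CQ\alpha\sqrt d L/K$ by Assumption~\ref{hyp:beta}; dividing by the bin volume $K^{-d}$ bounds their number by $O(Q\alpha\sqrt d L K^{d-1})$, and multiplying by $3N/(2K^d)$ gives $O(Q\alpha\sqrt d L N/K)$, the first term. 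For the strongly sub-optimal bins ($\Delta_l > \Delta_0$) I invoke Lemma~\ref{lem:d-bound_nl} with the alive bin $B_{k^\ast}$: since $\Delta_{k^\ast,l} = m_{k^\ast} - m_l \ge \Delta_l - \alpha\sqrt d L/K \ge \Delta_l/2$ once $\Delta_0$ is large enough, this yields $n_l(T) \le 12\log(T/\delta)/\Delta_l^2$. Grouping these bins dyadically by $\Delta_l \in [2^j\Delta_0, 2^{j+1}\Delta_0)$ and using Lemma~\ref{lem:d-lipschitz} with Assumption~\ref{hyp:beta} as above bounds the number of scale-$j$ bins by $O(K^d Q 2^j\Delta_0)$, each contributing at most $12\log(T/\delta)/(2^j\Delta_0)^2$ pulls; scale $j$ thus contributes $O(K^d Q\log(T/\delta)/(2^j\Delta_0))$, and summing the geometric series over $j\ge 0$ gives $O(K^d Q\log(T/\delta)/\Delta_0) = O(K^{d+1}Q\log(T/\delta)/(\alpha\sqrt d L))$, the second term.

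Beyond the case split, the main obstacle is purely the bookkeeping of constants: one must choose the single threshold $\Delta_0$ (a fixed multiple of $\alpha\sqrt d L/K$) large enough that the Lipschitz correction from Lemma~\ref{lem:d-lipschitz} is absorbed — guaranteeing both $\Delta_{k^\ast,l}\ge \Delta_l/2$ and that slightly sub-optimal arms genuinely lie in the margin band — yet small enough that it does not inflate the count of slightly sub-optimal bins, so that the two estimates collapse exactly to the stated coefficients $6$ and $24$. The inequalities $LQ\ge 1$, $\alpha\ge 1$ and $d\ge 1$ are used repeatedly to absorb lower-order terms.
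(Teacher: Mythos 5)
Your proof follows essentially the same route as the paper's: the paper's proof of Lemma \ref{lem:d-n_subopt} simply reuses the dyadic decomposition $\mathcal{S}_0, \mathcal{S}_1, \dots$ of sub-optimal bins from the proof of Lemma \ref{lem:d-R_subopt}, bounding pulls in near-threshold bins by their total arm count (via Lemma \ref{lem:d-lipschitz} and Assumption \ref{hyp:beta}) and pulls in the remaining bins via Lemma \ref{lem:d-bound_nl} applied to a surviving better bin, which is exactly your split at $\Delta_0$ plus geometric summation. Your explicit handling of the degenerate case (all optimal bins exhausted), where the paper only asserts that ``by definition of $\widehat f$'' some bin with $m_k \ge m_{\widehat f +1}$ is still alive, is a welcome clarification of the same argument rather than a different one.
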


\begin{lem}\label{lem:d-used_up}
Let $$A =  \sqrt{\frac{472QK^{d+2}\log(T/\delta)}{Nc_{p,d}Ld}} \lor 16\alpha QL/c_{p,d}.$$ At time $T$, on the event $\mathcal{E}_{a}\cap\mathcal{E}_{b}$, all bins $B_k$ such that $m_k \geq M + A\sqrt{d}L/K$ have died.
\end{lem}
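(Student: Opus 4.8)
The plan is to argue by contradiction, mirroring the one-dimensional proof of Lemma \ref{lem:used_up}. Suppose that some bin $B_k$ with $m_k \geq M + A\sqrt{d}L/K$ is still alive at time $T$, i.e. $n_k(T) < N_k$. Since $B_k$ was then alive at every round, Lemma \ref{lem:d-bound_nl} applies with this $k$: every strictly worse bin $B_l$ (those with $m_l < m_k$, so $\Delta_{k,l} > 0$) satisfies $n_l(T) \leq 3\log(T/\delta)/\Delta_{k,l}^2$. This is the only place the hypothesis ``$B_k$ alive'' enters, and it will let me show that too many optimal bins are left badly under-pulled.

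First I would control the total number of optimal arms left unpulled. Writing the budget identity $\sum_{l} n_l(T) = T = \sum_{l \leq \hat{f}} N_l$ and using that bins ranked above $\hat{f}+1$ are sub-optimal, the number of unpulled optimal arms equals $\sum_{l \leq \hat{f}}(N_l - n_l(T)) = \sum_{l > \hat{f}} n_l(T) \leq n_{\hat{f}+1}(T) + (\text{sub-optimal pulls})$. On $\mathcal{E}_b$ the first term is at most $3N/(2K^d)$, and Lemma \ref{lem:d-n_subopt} bounds the second by $B_{sub} := 6\alpha\sqrt{d}LQN/K + 24\log(T/\delta)K^{d+1}Q/(\alpha\sqrt{d}L)$. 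Hence the number of unpulled optimal arms is at most $B_{sub} + 3N/(2K^d)$, a bound that holds regardless of the standing assumption.

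Next I would produce a matching lower bound on this same quantity under the assumption that $B_k$ is alive. Set $\tau_0 = \sqrt{6\log(T/\delta)K^d/N}$. For any worse-optimal bin $B_l$ (with $M \lesssim m_l < m_k$) whose gap satisfies $\Delta_{k,l} \geq 2\tau_0$, Lemma \ref{lem:d-bound_nl} gives $n_l(T) \leq 3\log(T/\delta)/(2\tau_0)^2 \leq N/(8K^d)$, while $N_l \geq N/(2K^d)$ on $\mathcal{E}_b$; thus each such bin leaves at least $3N/(8K^d)$ arms unpulled. It remains to count these bins, i.e. the bins with $m_l \in [M, m_k - 2\tau_0]$. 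Here the anti-concentration estimate of Lemma \ref{lem:isoperimetrie} is essential: it yields $\lambda(\{x : M \leq m(x) < m_k - 2\tau_0\}) \geq c_{p,d}(m_k - 2\tau_0 - M)/L$, and, after shrinking the band by the Lipschitz margin $\sqrt{d}L/K$ of Lemma \ref{lem:d-lipschitz} to pass from the measure of this super-level region to the count of bins whose mean lies in the band, the number of such bins is at least $c_{p,d}(m_k - M)K^d/L$ up to $O(\tau_0 K^d + K^{d-1})$ corrections. Multiplying the count by $3N/(8K^d)$ gives a lower bound of order $c_{p,d}(m_k - M)N/L$ on the unpulled optimal arms.

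Combining the two bounds produces an inequality of the form $c_{p,d}(m_k - M)N/L \lesssim B_{sub} + N/K^d + c_{p,d}\tau_0 N/L$. Solving for $m_k - M$, substituting $B_{sub}$ and $\tau_0$, and invoking the choice of $K$, I expect the right-hand side to be at most $A\sqrt{d}L/K$ for the stated $A = \sqrt{472QK^{d+2}\log(T/\delta)/(Nc_{p,d}Ld)} \lor 16\alpha QL/c_{p,d}$: the $\tau_0$ term produces the square-root summand and the $\alpha\sqrt{d}LQN/K$ part of $B_{sub}$ produces the $16\alpha QL/c_{p,d}$ summand, the constant $472$ being chosen so that the remaining lower-order terms are absorbed for the relevant range of $N$. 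This contradicts $m_k - M \geq A\sqrt{d}L/K$, so no such bin can be alive. The hard part will be Step three: converting the anti-concentration measure bound of Lemma \ref{lem:isoperimetrie} into a clean lower bound on the number of bins whose mean reward lies in a prescribed band, correctly discarding the Lipschitz boundary bins straddling the level sets $m = M$ and $m = m_k$, and tracking the $\sqrt{d}$, $c_{p,d}$ and logarithmic factors so that a single $\lor$ (rather than a sum) suffices to dominate everything; selecting the threshold $\tau_0$, which balances the two branches of $A$, is the delicate quantitative point.
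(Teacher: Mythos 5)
Your skeleton matches the paper's: argue by contradiction, use Lemma \ref{lem:d-bound_nl} (valid because $B_k$ stays alive), count the bins just above the threshold via the isoperimetric Lemma \ref{lem:isoperimetrie}, and play the budget identity (unpulled optimal arms $=$ pulls below rank $\hat f$) against these two bounds. The gap is in how you bound the pulls in sub-optimal bins. You invoke the \emph{unconditional} Lemma \ref{lem:d-n_subopt}, whose bound $B_{sub} = 6\alpha\sqrt{d}LQN/K + 24\log(T/\delta)K^{d+1}Q/(\alpha\sqrt{d}L)$ does not decrease as $A$ grows, and this breaks the contradiction with the stated $A$ in two ways. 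First, after converting your counting lower bound $\tfrac{3c_{p,d}(m_k-M)N}{8L}$ through $m_k - M \geq A\sqrt{d}L/K$, dominating the term $6\alpha\sqrt{d}LQN/K$ requires $A > 16\alpha QL/c_{p,d}$ \emph{strictly, with room to spare for every other term}; the stated $A$ gives exactly $16\alpha QL/c_{p,d}$ on that branch, so there is zero slack (tuning $\tau_0$ can shave this constant but not create the needed room by itself). Second, and more fundamentally, dominating the term $24\log(T/\delta)K^{d+1}Q/(\alpha\sqrt{d}L)$ requires $A \gtrsim \frac{QK^{d+2}\log(T/\delta)}{\alpha c_{p,d}\,dLN}$. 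When the square-root branch of $A$ is the active one, i.e. $A = \sqrt{472X}$ with $X = \frac{QK^{d+2}\log(T/\delta)}{Nc_{p,d}Ld}$, this requirement reads $A > \tfrac{64}{472\alpha}A^2$, i.e. $A < \tfrac{59}{8}\alpha$; but branch-1-active forces $A \geq 16\alpha QL/c_{p,d} \geq 16\alpha$, a contradiction. So in the regime where $K^{d+2}\log(T/\delta)/N$ is large (which the lemma does not exclude, and which is compatible with the statement being non-vacuous, $A \leq K$), your inequality can never close, no matter how $\tau_0$ is chosen.

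The paper's proof avoids this precisely by \emph{not} reusing Lemma \ref{lem:d-n_subopt}. Inside the contradiction argument, it re-derives the bound on $n_{[\hat f+1, K^d]}(T)$ using the standing hypothesis: since $B_k$ with $m_k \geq M + A\sqrt{d}L/K$ is alive, every bin ranked after $\hat f+1$ has gap $\Delta_{k,l} \geq (A-\alpha)\sqrt{d}L/K \geq \tfrac{15}{16}A\sqrt{d}L/K$ \emph{relative to $B_k$} (no sub-optimal bin can be exhausted), and a peeling over the sets $\widetilde{\mathcal{S}}_n = \{l : M - m_l \in [A\sqrt{d}L2^{n-1}/K, A\sqrt{d}L2^{n}/K]\}$ combined with Lemma \ref{lem:d-bound_nl} yields $n_{[\hat f+1,K^d]}(T) \leq 28K^{d+1}Q\log(T/\delta)/(AL\sqrt{d})$. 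The $1/A$ decay of this bound is the whole point: after multiplying the final inequality through by $A$, both residual terms scale like $K^{d+2}\log(T/\delta)/N$, and the single square-root branch $\sqrt{472X}$ (together with $QL\geq 1$, $c_{p,d}\leq 1$) absorbs them, while the $16\alpha QL/c_{p,d}$ branch is only needed earlier to make the isoperimetric bin count $c_{p,d}A\sqrt{d}/(4K) - 2\alpha\sqrt{d}QL/K \geq c_{p,d}A\sqrt{d}/(8K)$ positive. To repair your proof, replace the appeal to Lemma \ref{lem:d-n_subopt} by this conditional, $A$-dependent bound on sub-optimal pulls; the rest of your outline then goes through.
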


Combining Lemmas \ref{lem:d-R_subopt}, \ref{lem:d-n_subopt} and \ref{lem:d-used_up}, we prove the following result.

\begin{lem}\label{lem:d-bound_Ropt}
On event $\mathcal{E}_a \cap \mathcal{E}_b$, 
\begin{equation*}
R_{opt}\leq 30\alpha A d L^2Q\left(\frac{N}{K^2} + \frac{\log(T/\delta)K^{d}}{\alpha^2 dL^2}\right).
\end{equation*}
\end{lem}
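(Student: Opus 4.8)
The plan is to transpose the one-dimensional proof of Lemma~\ref{lem:bound_Ropt} to the $d$-dimensional setting, substituting each ingredient by the analogue established above. Since every summand of $R_{opt} = \sum_{k=1}^{\widehat{f}} \sum_{a_i \in B_k \cap \overline{\Phi(T)}} (m(a_i) - M)$ is non-negative, it suffices to bound separately (a) the magnitude of a single term and (b) the number of non-zero terms, and then to multiply these two bounds.

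For (a), I would first use Lemma~\ref{lem:d-used_up} to discard every fully exhausted bin: on $\mathcal{E}_a \cap \mathcal{E}_b$ each bin with $m_k \geq M + A\sqrt{d}L/K$ has died, so the only bins still containing a non-pulled arm satisfy $M \leq m_k < M + A\sqrt{d}L/K$. Writing $m_k = M + \alpha' L/K$ with $0 \leq \alpha' < A\sqrt{d}$, and using that $m$ is continuous with $m_k$ its average over $B_k$, there is a point of $B_k$ at which $m$ equals $m_k$; feeding this point into Lemma~\ref{lem:d-lipschitz} bounds $\max_{a' \in B_k} m(a') - M$ by $(\alpha' + (\alpha' \lor \sqrt{d}))L/K \leq 2A\sqrt{d}L/K$, using $A \geq 1$. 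Hence every non-zero summand is at most $2A\sqrt{d}L/K$.

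For (b), I would invoke the conservation identity $\sum_k n_k(T) = T = \sum_{k \leq \widehat{f}} N_k + |\Phi^d(T) \cap B_{\widehat{f}+1}|$, which rewrites the number of non-pulled optimal arms $\sum_{k \leq \widehat{f}}(N_k - n_k(T))$ as the number of arms UCBF spends in strictly sub-optimal bins, $\sum_{k \geq \widehat{f}+2} n_k(T)$, up to a boundary correction bounded by $N_{\widehat{f}+1} \leq 3N/(2K^d)$. The first quantity is exactly the count controlled by Lemma~\ref{lem:d-n_subopt}, namely $6\alpha\sqrt{d}LQN/K + 24\log(T/\delta)K^{d+1}Q/(\alpha\sqrt{d}L)$, while the boundary term, once multiplied by the per-term bound $2A\sqrt{d}L/K$, is of order $A\sqrt{d}LN/K^{d+1}$ and is therefore absorbed into the $N/K^2$ term (since $K^{d-1} \geq 1$ and $10\alpha\sqrt{d}LQ \geq 1$). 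Multiplying the per-term bound of (a) by this count produces a term proportional to $\alpha A d L^2 Q\, N/K^2$ and a term proportional to $A Q\log(T/\delta)K^d/\alpha$, which is exactly the two-term form claimed. To secure the precise numerical constant rather than the cruder one coming from a single count-times-maximum estimate, I would, if needed, refine (a) by grouping the surviving optimal bins dyadically according to their gap $m_k - M$ and bounding the number of bins at each scale through the margin condition, exactly as in the proof of Lemma~\ref{lem:d-R_subopt}.

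The crux of the argument, and the only place where anything beyond bookkeeping occurs, is step (a)'s reliance on Lemma~\ref{lem:d-used_up}: the entire improvement over the trivial bound — which would simply weight each of the $\gtrsim N/K^d \lor K^d$ missing optimal arms by one — stems from the fact that UCBF empties every comfortably-optimal bin, so that each surviving optimal arm is only marginally optimal, with gap $O(A\sqrt{d}L/K)$. The point requiring the most care is propagating the geometric factor $\sqrt{d}$, arising from the bin diameter $\sqrt{d}/K$, consistently through both the per-term estimate and the pull count, since it is precisely this factor that distinguishes the statement from its one-dimensional counterpart; I would also verify that $A \geq 1$ across the whole parameter range so that the per-term bound $2A\sqrt{d}L/K$ is valid.
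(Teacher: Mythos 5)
Your proposal is correct and is essentially the paper's own (omitted) argument, which transposes the proof of Lemma~\ref{lem:bound_Ropt} verbatim: each non-zero term of $R_{opt}$ is bounded by $2A\sqrt{d}L/K$ via Lemma~\ref{lem:d-used_up} combined with the weak-Lipschitz control of Lemma~\ref{lem:d-lipschitz}, the number of such terms is bounded by the sub-optimal pull count of Lemma~\ref{lem:d-n_subopt} (your conservation identity with the $B_{\widehat{f}+1}$ boundary correction is the same accounting, made explicit), and the two bounds are multiplied. The only caveat is numerical: the raw product gives roughly $48AQ\log(T/\delta)K^{d}/\alpha$ for the second term rather than the stated $30AQ\log(T/\delta)K^{d}/\alpha$, a slack already present in the paper's own constant bookkeeping (the proof of Lemma~\ref{lem:d-n_subopt} itself yields $48$ where $24$ is stated), so your closing remark that a dyadic refinement would be needed to secure the precise constant is apt.
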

The proof of Lemma \ref{lem:d-bound_Ropt} is similar to that of Lemma \ref{lem:bound_Ropt}, and is therefore omitted.

Thus, on the event $\mathcal{E}_a \cap \mathcal{E}_b$,
\begin{eqnarray*}
R_T &\leq & \left(33\alpha A d L^2Q +128\alpha^2dL^2Q\right)\left(\frac{N}{K^2} + \frac{\log(T/\delta)\log_2(K/\alpha\sqrt{d}L)K^{d}}{\alpha^2 dL^2}\right).
\end{eqnarray*}

The event $\mathcal{E}_a\cap \mathcal{E}_b$ happens with probability larger than $1 - 8 \exp(\frac{2c_{p,d}^2N}{K^2}) - 2K^d\exp(-\frac{N}{10K^d}) - 2K^d\delta$. For the choice $K = \lceil N^{\frac{1}{d+2}}\log(N)^{-\frac{2}{d+2}}\rceil$ and $\delta = N^{-\frac{2d+2}{d+2}}$,
\begin{eqnarray*}
\mathbb{P}\left(\mathcal{E}_a \cap \mathcal{E}_b\right) &\geq& 1 - 8\exp\left(-2c_{p,d}^2N^{\frac{d}{d+2}}\log(N)^{\frac{4}{d+2}}\right) - 2(N^{\frac{1}{d+2}}+1)^{d}\log(N)^{\frac{-2d}{d+2}}\exp\left(-N^{\frac{2}{d+2}}\log(N)^{\frac{2d}{d+2}}/10\right)  \\
&& + 2(N^{\frac{1}{d+2}}\log(N)^{\frac{-2}{d+2}}+1)^dN^{\frac{-(2d+2)}{d+2}}\\
&\geq & 1 - O(N^{-1}).
\end{eqnarray*}

Note that for this choice of $K$, $A$ is bounded by a constant depending on $\alpha$, $Q$, $L$ and $c_{p,d}$. Then, $\mathcal{E}_a \cap \mathcal{E}_b$, there exists a constant $C$ depending on $d$, $L$, $Q$ and $p$ such that 
\begin{eqnarray*}
R_T &\leq & C \left(N^{\frac{d}{d+2}}\log(N)^{\frac{4}{d+2}} +  \frac{\frac{3+2d}{(d+2)^2}\log(N)\log(N)(N^{\frac{1}{d+2}}\log(N)^{\frac{-2}{d+2}}+1)^d}{\alpha^2 dL^2}\right).
\end{eqnarray*}

This concludes the proof of Theorem \ref{thm:d-bound_high_prob}.


\section{Proofs of auxiliary Lemmas}
\label{subsec:auxi}

\subsection{Proof of Lemma \ref{lem:lipschitz}}

Recall that $a\in I_k$ and $\alpha >0$ is such that $m(a) = M + \alpha L/K$. By Assumption \ref{hyp:lip}, we see that for any $a' \in I_k$,
\begin{eqnarray*}
\vert (M + \alpha L/K) - m(a') \vert &\leq & \max \{ \alpha L/K, L/K \},
\end{eqnarray*}
so
\begin{eqnarray*}
m(a') &\leq & M + (\alpha + (\alpha \lor 1))L/K.
\end{eqnarray*}
This yield the first part of the Lemma. To obtain the second part, note that Assumption \ref{hyp:lip} also implies
\begin{eqnarray}
\vert m(a') - (M + \alpha L/K) \vert &\leq & \max \{ \vert m(a') - M \vert , L/K \}\nonumber.
\end{eqnarray}
Thus,
\begin{eqnarray}
m(a') &\geq & M + \alpha L/K  -  \max \{ \vert m(a') - M \vert , L/K \}.\label{eq:cas_1}
\end{eqnarray}
If $\vert m(a') - M \vert \geq L/K$, then equation \eqref{eq:cas_1} implies
\begin{eqnarray*}
m(a') &\geq & M + \alpha L/K  -  (m(a') - M).
\end{eqnarray*}
Thus,
\begin{eqnarray*}
2m(a') &\geq& 2M + \alpha L/K
\end{eqnarray*}
and 
\begin{eqnarray*}
m(a')&\geq& M + \frac{\alpha}{2} L/K.
\end{eqnarray*}
Since $\vert m(a') - M \vert =  m(a') - M = \alpha L/(2K)$, $\vert m(a') - M \vert \geq L/K$ implies $\alpha \geq 2$. On the other hand, if $\vert m(a') - M \vert < L/K$, equation \ref{eq:cas_1} implies
\begin{eqnarray*}
m(a') &\geq & M + \alpha L/K  -  L/K\\
&\geq& M + \frac{(\alpha-1)L}{K}.
\end{eqnarray*}
Since $m(a') - M \leq \vert m(a') - M \vert$, the assumption $\vert m(a') - M \vert < L/K$ implies that $\alpha < 2$.

To summarise, when $\alpha < 2$ we necessarily have $\vert m(a') - M \vert < L/K$, and $m(a') \geq M + (\alpha-1)L/K$. On the contrary,  when $\alpha \geq 2$ we necessarily have $\vert m(a') - M \vert \geq L/K$, and $m(a') \geq M + \alpha L/(2K)$. This writes
\begin{eqnarray*}
m(a') &\geq& M + \left(\alpha - \frac{\alpha\lor 2}{2}\right)\frac{L}{K}.
\end{eqnarray*}
Using the same arguments, we can prove similar bounds for the case $m(a) = M - \alpha L/K$.
\subsection{Proof of Lemma \ref{lem:control_hat_f}}
Recall that $f = \lfloor pK\rfloor$, and $\widehat{f}$ is such that $ N_{1} + .. +  N_{\widehat{f}}< T \leq N_{1} + .. +  N_{\widehat{f}+1}$. By definition, $N_{1}  + .. + N_{f-1} = \underset{1\leq i \leq N}{\sum} \mathds{1}_{\{a_i \in I_{1}\cup .. \cup I_{f-1}\}}$, where $\mathds{1}_{\{a_i \in I_{1}\cup .. \cup I_{f-1}\}}$ are independant Bernoulli random variables of parameter $\frac{f-1}{K}$. Using Hoeffding's inequality, we find that

\begin{eqnarray*}
\mathbb{P}\left( \underset{1\leq i \leq N}{\sum} \mathds{1}_{\{a_i \in I_{1}\cup .. \cup I_{f-1}\}} - \frac{(f-1)N}{K} \geq \frac{N}{K} \right) &\leq& e^{-\frac{2N}{K^2}}.
\end{eqnarray*}
Now, by definition, $f = \lfloor TK/N \rfloor$, and so $fN/K \leq T$. Thus, 
\begin{eqnarray}\label{eq:f-1}
\mathbb{P}\left( N_{1} + .. +N_{f-1}  \geq T \right) &\leq& e^{-\frac{2N}{K^2}}.
\end{eqnarray}

This shows that with high probability, $N_{1} + .. +N_{f-1}<T$, which implies that $f-1 < \hat{f}+1$. Using again Hoeffding's inequality, we find that

\begin{eqnarray*}
\mathbb{P}\left(  \frac{(f+2)N}{K} -\underset{1\leq i \leq N}{\sum} \mathds{1}_{\{a_i \in I_{1}\cup .. \cup I_{f+2}\}} \geq \frac{N}{K} \right) &\leq& e^{-\frac{2N}{K^2}}.
\end{eqnarray*}

By definition of $f$, $(f+1)N/K \geq T$. Thus, 
\begin{eqnarray}\label{eq:f+2}
\mathbb{P}\left( N_{1} + .. +N_{f+2}  \geq T \right) &\leq& e^{-\frac{2N}{K^2}}
\end{eqnarray}

This shows that with high probability, $T<N_{1} + .. +N_{f+2}$, and thus $f+2 > \hat{f}$. Combining  equations \eqref{eq:f-1} and \eqref{eq:f+2}, we find that with probability larger than $1- 2e^{-\frac{2N}{K^2}}$, $\vert f - \hat{f}\vert \leq 1$.

\medskip

In a second time, we prove that $m_{f} \in [M-L/K, M+L/K]$. To do so, we first show that there are at least $\lceil pK \rceil$ intervals $k$ such that $m_k \geq M - L/K$, or equivalently that there are at most $\lfloor (1-p)K \rfloor $ intervals $k$ such that $m_k < M - L/K$. Indeed, for all $k$ such  that $m_k < M - L/K$, there exists $a \in I_k$ such that $m(a) < M-L/K$. Using Lemma \ref{lem:lipschitz}, we see that $\forall a \in I_k$, $m(a) \leq M$. By definition of $p$, there can be at most $\lfloor (1-p)K \rfloor$ such intervals. Therefore, there are at least $\lceil pK \rceil$ intervals $k$ such that $m_k \geq M - L/K$. Since $f < \lfloor pK\rfloor$, this implies that $m_{f} \geq M - L/K$. Similar arguments show that $m_{f} \leq M + L/K$.

We conclude by noting that since $m_f \geq M - L/K$, Lemma \ref{lem:lipschitz} implies  $\min_{a\in \cup_{k\leq f}I_k } m(a)\geq M-2L/K$. We define $\tilde{a} = \argmax\{m(a): a \in \cup_{k>f} \overline{I_k}\}$. The continuity of $m$ implies that $m(\tilde{a}) \geq M - 2L/K$. Let $\tilde{k}>f$ be such that $\tilde{a}\in \overline{I_{\tilde{k}}}$. Then, Lemma \ref{lem:lipschitz} implies that $m_{\tilde{k}} \geq M - 4L/K$. Since $m_{f+1} = \max_{k>f} m_k$, this implies in particular $m_{f+1} \geq M-4L/K$. Similar arguments can be used to show that $m_{f+2} \geq M-8L/K$ and that $m_{f-1} \leq M+4L/K$. Thus, when $\vert\hat{f} - f \vert \leq 1$, we find that $m_{\widehat{f}} \in [M - 4L/K, M + 4L/K]$, and $m_{\widehat{f}+1} \in [M - 8L/K, M + L/K]$.
\subsection{Proof of Lemma \ref{lem:control_hat_M}}

Recall that $\widehat{M} = m(a_{\phi^*(T)})$, where $T= pN$ and $\phi^*$ is a permutation such  that $\{m(a_{\phi^*(i)})\}_{1\leq i \leq N}$ is a decreasing sequence. Thus,  $\widehat{M}$ is the $T$-th largest payment for the arms  with covariates $\{a_1, ..., a_N\}$. To bound its deviation from its expected value $M$, we note that for all $t>0$, $\left\{\widehat{M} \geq M + t\right\}$ implies $\left\{ \underset{1\leq i \leq N}{\sum} \mathds{1}_{\{m(a_i)\geq M+t\}} \geq T\right\}$. Since $T = Np = N \mathbb{P}\left(m(a_1) \geq M\right)$,
\begin{align*}
\mathbb{P}&\left(\widehat{M} \geq M + t \right) \leq \mathbb{P}\left(\underset{1\leq i \leq N}{\sum} \mathds{1}_{\{m(a_i)\geq M+t\}} \geq N\mathbb{P}\left(m(a_1) \geq M\right)\right)\\
&\leq \mathbb{P}\left(\underset{1\leq i \leq N}{\sum} \left(\mathds{1}_{\{m(a_i) \geq M+t\}} - \mathbb{P}\left(m(a_1) \geq M+t \right) \right) \geq N\mathbb{P}\left(m(a_1) \in [M, M+t)\right)\right).
\end{align*}

Using Hoeffding's equality, we find that
\begin{align*}
\mathbb{P}\left(\widehat{M} \geq M + t \right) \leq \exp\left(-2N\mathbb{P}\left(m(a_1) \in [M, M+t)\right)^2 \right).
\end{align*}

For the choice $t = L/K$, it implies that
\begin{align*}
\mathbb{P}\left(\widehat{M} \geq M + L/K \right) \leq \exp\left(-2N\mathbb{P}\left(m(a_1) \in [M, M+L/K)\right)^2 \right).
\end{align*}

Next, we obtain a lower bound on $\mathbb{P}\left(m(a_1) \in [M, M+L/K)\right)$. Note that either $\max \{m(a) : a\in[0,1] \} \leq M+L/K$, and $\mathbb{P}\left(m(a_1) \in [M, M+L/K)\right) = \mathbb{P}\left(m(a_1) \geq M\right) = p \geq 1/K$, or $\max \{m(a) : a\in[0,1] \} > M+L/K$. 

In this case, choose $a^{(1)} \in \argmax_a\{m(a)\}$ ($a^{(1)} $ exists since $m$ is continuous and defined on a compact set). Note that $m(a^{(1)}) > M+L/K$. Since $m$ is continuous and $\lambda(\{a : m(a)<M\}) >0$ (because of Assumption \ref{hyp:beta} and the fact that $p<1$), $\{a : m(a) = M\}\neq \emptyset$. Define $a^{(2)} = \argmin_a\{\vert a - a^{(1)}\vert : m(a) = M\}$, and assume without loss of generality that $a^{(1)} \leq a^{(2)}$. Since $m$ is continuous, $m(a^{(1)}) > M+L/K$ and $m(a^{(2)}) = M$, we see that $\{a \in [ a^{(1)}, a^{(2)}]: m(a) = M +L/K\}\neq \emptyset$. Define finally $a^{(3)} = \max\{a : a \leq a^{(2)}, m(a) = M+L/K\}$. By construction, for all $a \in [a^{(3)}, a^{(2)})$, $m(a) \in [M, M+L/K)$. Using Assumption \ref{hyp:lip}, we find that $\vert a^{(3)}- a^{(2)}\vert \geq 1/K$. Thus, $\mathbb{P}\left(m(a_1) \in [M, M+L/K\right) \geq \mathbb{P}\left(m(a_1) \in [a^{(3)}, a^{(2)}] \right) \geq 1/K$.

Putting things together, we find that 
\begin{eqnarray*}
\mathbb{P}\left(\widehat{M} \geq M + L/K \right) &\leq& \exp\left(-2N/K^2\right).
\end{eqnarray*}
Using similar arguments, we can show that $\mathbb{P}\left(\widehat{M} \leq M - L/K \right) \leq \exp\left(-2N/K^2\right)$.

\subsection{Proof of Lemma \ref{lem:ech_arms}}

In order to prove Lemma \ref{lem:ech_arms}, we first state the following result.

\begin{lem}\label{lem:arms_in_mu}
Let $\mathcal{B}$ be a Borel set of measure $\lambda(\mathcal{B}) \geq N^{-2/3}$, and $N_{\mathcal{B}}$ be the number of arms in $\mathcal{B}$. Then, 
\begin{eqnarray*}
\mathbb{P}\left( \vert N_{\mathcal{B}} - \lambda(\mathcal{B}) N \vert \geq \sqrt{\lambda(\mathcal{B}) N^{4/3}}\right) \leq  2e^{-\frac{N^{1/3}}{3}}.
\end{eqnarray*}
\end{lem}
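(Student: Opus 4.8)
The plan is to recognize $N_{\mathcal{B}}$ as a binomial random variable and apply a multiplicative Chernoff bound, using the lower bound $\lambda(\mathcal{B}) \geq N^{-2/3}$ to keep the relative deviation below $1$. Under Assumption~\ref{hyp:uniforme} the covariates $a_1,\dots,a_N$ are i.i.d.\ uniform on $[0,1]$, so the indicators $\mathds{1}\{a_i \in \mathcal{B}\}$ are i.i.d.\ Bernoulli with parameter $\lambda(\mathcal{B})$; consequently $N_{\mathcal{B}} = \sum_{i=1}^N \mathds{1}\{a_i \in \mathcal{B}\}$ follows a $\mathrm{Binomial}(N,\lambda(\mathcal{B}))$ law with mean $\mu := \lambda(\mathcal{B})N$.

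First I would rewrite the target deviation as a multiplicative one. Setting $\eta := \sqrt{\lambda(\mathcal{B})N^{4/3}}/\mu$, the event whose probability we must bound is exactly $\{|N_{\mathcal{B}} - \mu| \geq \eta\mu\}$. A direct computation gives $\eta^2 = \lambda(\mathcal{B})N^{4/3}/\mu^2 = 1/(\lambda(\mathcal{B})N^{2/3})$, which yields the crucial identity $\mu\eta^2 = \lambda(\mathcal{B})N \cdot \lambda(\mathcal{B})^{-1}N^{-2/3} = N^{1/3}$. The hypothesis $\lambda(\mathcal{B}) \geq N^{-2/3}$ enters precisely here: it gives $\lambda(\mathcal{B})N^{2/3} \geq 1$, hence $\eta^2 \leq 1$, so $\eta \in (0,1]$ and we are in the regime where the sub-Gaussian form of the Chernoff bound applies.

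Then I would invoke the standard multiplicative Chernoff bounds for sums of independent Bernoulli variables: the upper tail $\mathbb{P}(N_{\mathcal{B}} \geq (1+\eta)\mu) \leq \exp(-\mu\eta^2/3)$, valid for $\eta \in [0,1]$, and the lower tail $\mathbb{P}(N_{\mathcal{B}} \leq (1-\eta)\mu) \leq \exp(-\mu\eta^2/2)$. Substituting the identity $\mu\eta^2 = N^{1/3}$, these become $\exp(-N^{1/3}/3)$ and $\exp(-N^{1/3}/2) \leq \exp(-N^{1/3}/3)$ respectively, and a union bound over the two tails gives the stated $2e^{-N^{1/3}/3}$.

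There is essentially no hard part here: the entire content lies in the observation that the threshold $\sqrt{\lambda(\mathcal{B})N^{4/3}}$ is calibrated so that $\mu\eta^2 = N^{1/3}$ independently of $\lambda(\mathcal{B})$, and that the measure lower bound is exactly what certifies $\eta \leq 1$. As an alternative route I could apply Bernstein's inequality with variance proxy $\sigma^2 = N\lambda(\mathcal{B})(1-\lambda(\mathcal{B})) \leq \mu$ and note that $\mu \geq N^{1/3}$ forces $\sqrt{\lambda(\mathcal{B})N^{4/3}} \leq \mu$, so the deviation stays in the Gaussian regime of Bernstein's bound and yields the same conclusion (in fact with a slightly better constant $3/8$ in the exponent); I would keep the Chernoff version for cleanliness.
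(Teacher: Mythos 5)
Your proposal is correct and takes essentially the same approach as the paper: the paper also views $N_{\mathcal{B}}$ as a sum of i.i.d.\ Bernoulli($\lambda(\mathcal{B})$) indicators and applies Bernstein's inequality with $t = \sqrt{\lambda(\mathcal{B})N^{4/3}}$, which is precisely the alternative route you sketch at the end (including the improved constant $3/8$ in the exponent, relaxed to $1/3$ via $t \leq \lambda(\mathcal{B})N$, which is where the hypothesis $\lambda(\mathcal{B}) \geq N^{-2/3}$ enters). Your lead argument via the multiplicative Chernoff bound is a cosmetic variant of the same idea, with the identical calibration $\mu\eta^2 = N^{1/3}$ doing the work.
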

\begin{proof}
Recall that $N_{\mathcal{B}} = \underset{1 \leq i \leq N}{\sum}\mathds{1}_{a_i \in \mathcal{B}}$, where $\mathds{1}_{a_i \in \mathcal{B}} \overset{i.i.d}{\sim}$Bernoulli$(\lambda(\mathcal{B}))$. Applying Bernstein's inequality, we find that
\begin{eqnarray*}
\mathbb{P}\left(\vert N_{\mathcal{B}}- \lambda(\mathcal{B}) N \vert \geq t\right) &\leq& 2e^{-\frac{t^2}{2\lambda(\mathcal{B}) N + 2t/3}}\\
\mathbb{P}\left(\vert N_{\mathcal{B}} - \lambda(\mathcal{B}) N \vert \geq  \sqrt{\lambda(\mathcal{B}) N^{4/3}} \right) &\leq& 2e^{-\frac{N^{1/3}}{3}}
\end{eqnarray*}
\end{proof}
Now, we use Lemma \ref{lem:arms_in_mu} for $\mathcal{B} = \left\{x : m(x) \in [M-16L/K, M+L/K]\right\}$. By Assumption \ref{hyp:beta}, $\lambda\left(\left\{x : m(x) \in [M-16L/K, M+L/K]\right\}\right) \leq 16LQ/K$. When $K \leq N^{2/3}$, the inequality $QL\geq 1$ implies that $K \leq 16LQN^{2/3}$, and $\sqrt{16LQN^{4/3}/K} \leq 16 LQN/K$. This proves Lemma \ref{lem:ech_arms}.

\subsection{Proof of Lemma \ref{lem:E_a}}
Non-zero terms in $R_T^{(d)}$ correspond to pairs of arms $(i, j)$ such that $i$ is pulled by $\phi^d$ but not by $\phi^*$, and $j$ is pulled by $\phi^*$ but not by $\phi^d$. If an arm $i$ is pulled by $\phi^d$, it belongs to an interval $k$ such that $m_k \geq m_{\hat{f}+1}$. On the event $\mathcal{E}_a$, $$m_{\hat{f}+1} \geq M - 8L/K.$$ Using Lemma \ref{lem:lipschitz}, we find that $$m(a_i) \geq M - 16L/K.$$ On the other hand, if $i$ is not pulled by $\phi^*$, it must be such that $m(a_i) \leq \widehat{M}$. On the event $\mathcal{E}_a$, this implies that $m(a_i) \leq M + L/K$. Since there are at most $\frac{32LQ N}{K}$ arms in $[M-16L/K, M + L/K]$ on the event $\mathcal{E}_a$, there are at most $\frac{32LQ N}{K}$ arms that are selected by $\phi^d$ and not by $\phi^*$, and thus at most $\frac{32LQ N}{K}$ non-zero terms in $R_T^{(d)}$.

Now, each of these terms corresponds to the cost of pulling an arm $i$ selected by $\phi^d$ but not by $\phi^*$, instead of an arm $j$ selected by $\phi^*$ but not by $\phi^d$. Assume that $m_{\hat{f}+1}\geq M$. Then, using Lemma \ref{lem:lipschitz}, we see that if $i$ is selected by $\phi^d$, $m(a_i) \geq M - L/K$. Moreover, if $j$ is not selected by $\phi^d$, it belongs to an interval $I_k$ such that $m_k \leq  m_{\hat{f}+1}$. On $\mathcal{E}_a$, $m_{f+1}\leq M + L/K$. Thus, $m(a_j) \leq M + 2L/K$, and $m(a_j) - m(a_i) \leq 3L/K$. On the other hand, if $m_{\hat{f}+1} < M$, then according to Lemma \ref{lem:lipschitz} for all $i$ selected by $\phi^d$, $m(a_i) \geq M - 2\left((M-m_{\hat{f}+1})\lor L/K\right)$, while for $j$ not selected by $\phi^d$, $m(a_j) \leq m_{\hat{f}+1} + (M - m_{\hat{f}+1})/2\lor L/K$. Thus, $m(a_j) - m(a_i) \leq 3/2\left((M - m_{\hat{f}+1})\lor 2L/K \right)\leq 12L/K$.

To conclude, on the event $\mathcal{E}_a$ there are at most $\frac{32LQN}{K}$ non-zero terms in $R_T^{(d)}$, and each of them is bounded by $12L/K$. Thus, 
$$R_T^{(d)} \leq \frac{32QLN}{K} \times 12L/K.$$

\subsection{Proof of Lemma \ref{lem:equi_nk}}

Note that for $k \in \{1,...,K\}$, $I_k$ is a Borel set of measure $1/K \geq N^{-2/3}$. Using Lemma \ref{lem:arms_in_mu}, we find that
\begin{eqnarray*}
\mathbb{P}\left(\left \vert N_k - \frac{N}{K}\right \vert \geq \frac{N^{2/3}}{K^{1/2}} \right) &\leq& 2e^{-\frac{N^{1/3}}{3}}\\
\mathbb{P}\left(\left \vert N_k - \frac{N}{K}\right \vert \geq \frac{N}{2K}\times\frac{2K^{1/2}}{N^{1/3}} \right) &\leq& 2e^{-\frac{N^{1/3}}{3}}.
\end{eqnarray*}
Since $K \leq N^{2/3}/4$, $2K^{1/2}/N^{1/3}\leq 1$.
A union bound for $k = 1, ..., K$ yields the result.

\subsection{Proof of Lemma \ref{lem:borne_m}}

Recall that $a_i \sim \mathcal{U}([0,1])$, and thus $a_i \big \vert \{a_i \in I_k\} \sim \mathcal{U}(I_k)$. Since the arms $a_{\pi_k(s)}$ are selected uniformly at random among the arms in $I_k$, they are independent from one another, and uniformly distributed on $I_k$. 

For $k \in 1, ..., K$ and for $n \in [0, N]$, we denote by $\mathbb{P}_{n}$ the probability measure obtained by conditioning on the event $N_k = n$ (this event has a strictly positive probability because $\lambda(I_k) \in (0,1)$). Note that for any $s \in [1, n]$, $\mathbb{E}_{n}[y_{\pi_k(s)}] = m_k$. Using Hoeffding's inequality, we find that for any $n \in [1, N]$ and any $s \in [1, n]$

\begin{eqnarray*}
\mathbb{P}_{n}\left( \left\vert \frac{1}{s} \underset{1 \leq i \leq s}{\sum} y_{\pi_k(i)} - m_k \right\vert \geq \sqrt{\frac{\log(T/\delta)}{2s}} \right) \leq 2e^{-\log(T/\delta)} = \frac{2\delta}{T}
\end{eqnarray*}

The inequality $\vert \widehat{m}_k(0) - m_k\vert \leq \infty$ also holds, since we defined $\widehat{m}_k(0) = 0$, and thus the inequality above is also verified for $n = 0$. Using a union bound for $s = 0, ..., (n \land T)$, we find that for all $n = 0, ..., N$,
\begin{eqnarray*}
\mathbb{P}_{n}\left(\exists s \leq (n \land T) : \left \vert  \widehat{m}_k(s) - m_k \right\vert \geq \sqrt{\frac{\log(T/\delta)}{2s}} \right) &\leq& \frac{2\delta (n\land T)}{T}\leq 2\delta.
\end{eqnarray*}

We integrate over the different values of $n$ and find that 

\begin{eqnarray*}
\mathbb{P}\left(\exists s \leq (N_k\land T) : \left\vert \widehat{m}_k(s) - m_k \right\vert \geq \sqrt{\frac{\log(T/\delta)}{2s}} \right) &\leq& 2\delta.
\end{eqnarray*}

Finally, a union bound for $k = 1, ..., K$ yields
\begin{eqnarray*}
\mathbb{P}\left( \exists k \in \{1,...,K\}, s \leq (N_k \land T) : \left\vert \widehat{m}_k(s) - m_k \right\vert \geq \sqrt{\frac{\log(T/\delta)}{2s}}\right) &\leq& 2K\delta.
\end{eqnarray*}

\subsection{Proof of Lemma \ref{lem:bound_nl}}
First, note that on $\mathcal{E}_b$, all intervals are non-empty. By definition of Algorithm UCBF, at least one arm is pulled in each interval. To bound the number of arms pulled in interval $I_l$, assume that time $t>K$ is such that the arm $\phi(t)$ is selected in $I_l$. Since there are arms available in $I_k$ at time $T$, there are arms available in $I_k$ at time $t\leq T$. If UCBF pulls an arm in $I_l$ instead of an arm in $I_k$, we must have
\begin{eqnarray*}
\widehat{m}_{k}(n_{k}(t-1)) + \sqrt{\frac{\log(T/\delta)}{2n_{k}(t-1)}} \leq \widehat{m}_{l}(n_{l}(t-1)) + \sqrt{\frac{\log(T/\delta)}{2n_{l}(t-1)}}.
\end{eqnarray*}
On the event $\mathcal{E}_b$, this implies that 
\begin{eqnarray*}
m_{k}&\leq& m_{l}+ 2\sqrt{\frac{\log(T/\delta)}{2n_{l}(t-1)}}.
\end{eqnarray*}
Straightforward calculations show that
\begin{eqnarray*}
n_{l}(t-1) &\leq& \frac{2\log(T/\delta)}{\Delta_{k,l}^2}.
\end{eqnarray*}
Thus $n_{l}(T) \leq \left(\frac{2\log(T/\delta)}{\Delta_{k,l}^2} \lor 1\right) + 1 \leq \frac{3\log(T/\delta)}{\Delta_{k,l}^2}$ since $\Delta_{k,l}^2\leq 1$ and $\log(T/\delta)\geq 1$.


\subsection{Proof of Lemma \ref{lem:R_subopt}}

By Lemma \ref{lem:control_hat_f}, on the event $\mathcal{E}_a$, $m_{\hat{f}+1} \in [M-8L/K, M+L/K]$. We group intervals with mean rewards lower than $m_{\widehat{f}+1}$ into the following subsets.

Let $\mathcal{S}_0 = \left\{ k : (M - m_{k}) \in [-L/K, 10L/K] \right\}$, $\mathcal{S}_1 = \left\{ k : (M - m_{k}) \in (10L/K, 16L/K] \right\}$, and for $n\geq 2$ define $\mathcal{S}_n = \left\{ k : (M - m_{k}) \in [2^{n+2}L/K, 2^{n+3}L/K \right\}$. Note that for $n \geq \log_2(K/L)  - 2$, $\mathcal{S}_n$ is empty since $m$ is bounded by $1$. 

Using Lemma \ref{lem:lipschitz}, we note that for all $l \in \mathcal{S}_0 $ and all $a \in I_l$, $$\vert m(a) - M \vert \leq 20L/K.$$ Using Assumption \ref{hyp:beta}, we conclude that $\left \vert \mathcal{S}_0 \right \vert \leq 20LQ$. On $\mathcal{E}_a$, there are at most $1.5N/K$ arms in each interval, so the number of arms in intervals in $\mathcal{S}_0$ is at most $30LQN/K$. Moreover for all $l \in \mathcal{S}_0$ and all $a_i \in I_l$, $(M - m(a_i))\leq 20L/K$. Thus, the arms pulled from intervals in $\mathcal{S}_0$ contributes to $R_{subopt}$ by at most $20L/K \times 30LQN/K = 600L^2QN/K^2$.

Similarly, for all $l \in \mathcal{S}_1$ and all $a \in I_l$, $$\vert m(a) - M \vert \leq 32L/K.$$ Using Assumption \ref{hyp:beta}, we conclude that $\left \vert \mathcal{S}_1 \right \vert \leq 32LQ$. Moreover, by definition of $\widehat{f}$, there exists a interval $I_k$ with $m_k \geq m_{\widehat{f}+1}$ such that $n_k(T) < N_k$. Since $\Delta_{k,l} \geq m_{\hat{f}+1} - m_l \geq M - 8L/K - (M - 10L/K) \geq 2L/K$ for all $l \in \mathcal{S}_1$, we use Lemma \ref{lem:bound_nl} and find that
\begin{equation*}
    n_l(T) \leq \frac{3\log(T/\delta)K^2}{4L^2}.
\end{equation*}
Thus, the number of arms pulled in $\mathcal{S}_1$ is at most $3\log(T/\delta)K^2/(4L^2) \times 32LQ = 24 \log(T/\delta)K^2Q/L$. Since each arm in $\mathcal{S}_1$ has a payment larger than $M-32L/K$, the arms pulled from intervals in $\mathcal{S}_1$ contributes to $R_{subopt}$ by at most $24 \log(T/\delta)K^2Q/L\times 32L/K \leq 768\log(T/\delta)KQ$.

Finally, note that for $n \geq 2$ and $l\in \mathcal{S}_n$, $\Delta_{k,l} \geq (2^{n+2}-8)L/K \geq 2^{n+1}L/K$. Using Lemma \ref{lem:bound_nl}, we find that
\begin{equation*}
    n_l(T) \leq \frac{3\log(T/\delta)K^2}{2^{2n+2}L^2}.
\end{equation*}
Applying Lemma \ref{lem:lipschitz}, we see that each arm $a_i \in I_l$ verifies $m(a_i) \geq M - 2^{n+4}L/K$. Using Assumption \ref{hyp:beta}, we find that $\vert \mathcal{S}_n\vert\leq 2^{n+4}QL$. Thus,
\begin{eqnarray*}
R_{subopt} &\leq& \frac{600L^2QN}{K^2} + 768\log(T/\delta)KQ \\
&&+ \overset{\log_2(K/L)-2}{\underset{n = 2}{\sum}} \left \vert \mathcal{S}_n \right \vert \frac{3\log(T/\delta)K^2}{2^{2n+2}L^2}\times \frac{2^{n+4}L}{K}\\
&\leq& \frac{600L^2QN}{K^2} + 768\log(T/\delta)KQ +  192(\log(T/\delta)QK(\log_2(K/L)-3))\\
&\leq& \frac{600L^2QN}{K^2} + 192\log(T/\delta)KQ\left(\log_2(K/L) + 1\right).
\end{eqnarray*}

\subsection{Proof of Lemma \ref{lem:n_subopt}}

Along the lines of the proof of Lemma \ref{lem:R_subopt}, we have proved that on $\mathcal{E}_a \cap \mathcal{E}_b$, the number of arms in $\mathcal{S}_0$ is bounded by $30QLN/K$ and that the number of arms pulled from intervals in $\mathcal{S}_1$ is bounded by $24 \log(T/\delta)K^2Q/L$. Thus,
\begin{eqnarray*}
\underset{n = 0}{\overset{\log_2(K/L)-1}{\sum}}\underset{I_k \in \mathcal{S}_n}{\sum} n_k(T) &\leq& \frac{30QLN}{K} + \frac{24 \log(T/\delta)K^2Q}{L} + \underset{n = 2}{\overset{\log_2(K/L)-1}{\sum}}\left\vert \mathcal{S}_n \right\vert \frac{3\log(T/\delta)K^2}{2^{2n+2}L^2}\\
&\leq& \frac{30QLN}{K} + \frac{24 \log(T/\delta)K^2Q}{L} +  \underset{n = 2}{\overset{\log_2(K/L)-1}{\sum}} \frac{12\log(T/\delta)QK^2}{2^{n}L}\\
&\leq& \frac{30QLN}{K} + \frac{30 \log(T/\delta)K^2Q}{L}
\end{eqnarray*}

Thus, the number of arms pulled from sub-optimal intervals is bounded by $30Q(LN/K + \log(T/\delta)K^2/L)$.


\subsection{Proof of Lemma \ref{lem:used_up}}
Before proving Lemma \ref{lem:used_up}, let us introduce further notations. For any two intervals $I_h$ and $I_i$ such that $m_h \geq m_i$, define $N_{[h,i]} = \overset{i}{\underset{j = h}{\sum}} N_j$, and $n_{[h,i]}(T) = \overset{i}{\underset{j = h}{\sum}} n_j(T)$. 

We prove Lemma \ref{lem:used_up} by contradiction. Assume that there is an interval $I_k$ such that $m_k \geq M + AL/K$ and $n_k(T) < N_k$. By continuity of $m$, there exists $a \in [0,1]$ such that $m(a) = M + AL/(4K)$, and by Lemma \ref{lem:lipschitz} there exists an interval $I_l$ that contains $a$ such that $m_l \in [M + AL/(8K), M + AL/(2K)]$. Note that since $A\geq 33$, on the event $\mathcal{E}_a$ $m_l > m_{\hat{f}}$ and  $l<\widehat{f}$.

By definition of $\hat{f}$, we have $T > N_{[1,\widehat{f}]} = N_{[1,l-1]} + N_{[l,\widehat{f}]}$. On the other hand, $T =  n_{[1,l-1]}(T) + n_{[l,K]}(T)$. Since $N_{[1,l-1]}> n_{[1,l-1]}(T)$ on the event $\{n_k(T) < N_k\}$, we necessarily have $N_{[l, \widehat{f}]} < n_{[l, K]}(T) = n_{[l, \hat{f}]}(T) + n_{[\hat{f}+1, K]}(T)$.

We obtain a contradiction by proving that on $\mathcal{E}_a \cap \mathcal{E}_b \cap \left\{n_k(T) < N_k\right\}$, $$N_{[l, \widehat{f}]} - n_{[l, \widehat{f}]}(T) > n_{[\hat{f}+1, K]}(T).$$

In words, we prove that the number of sub-optimal arms pulled is strictly smaller than the number of remaining optimal arms, and obtain a contradiction.

To obtain a lower bound on $N_{[l, \widehat{f}]} - n_{[l, \widehat{f}]}(T)$, we note that for all  $h \in [l, \hat{f}]$, $\Delta_{k,h} \geq \Delta_{k,l} \geq AL/(2K)$. Using Lemma \ref{lem:bound_nl}, we see that of the event $\mathcal{E}_{a}\cap \mathcal{E}_b \cap \left\{n_k(T) < N_k \right\}$
\begin{eqnarray*}\label{eq:upper_n(T)}
n_{h}(T) \leq \frac{3\log(T/\delta)}{\Delta_{k,h}^2} \leq \frac{3\log(T/\delta)}{\left( AL/(2K)\right)^2} \leq \frac{12K^2\log(T/\delta)}{\left( AL\right)^2}.
\end{eqnarray*}

On the event $\mathcal{E}_b$, each interval contains at least $N/(2K)$ arms. Thus, 

\begin{eqnarray*}\label{eq:upper_n(T)}
N_{h}-n_{h}(T) \geq \frac{N}{2K} -  \frac{12K^2\log(T/\delta)}{\left( AL\right)^2}.
\end{eqnarray*}

Let $\mathcal{N}_{[l, \widehat{f}]}$ denote the number of intervals $I_h$ for $h \in [l, \widehat{f}]$, and let $a^{(1)}\in I_l$ be such that $m(a^{(1)}) = M+AL/(4K)$. Let $a^{(2)} = \argmin_{a : m(a) = M+ 4L/K}\vert a - a^{(1)}\vert$, and assume without loss of generality that $a^{(1)}< a^{(2)}$. Let $a^{(3)} = max\{a \in \left[a^{(1)}, a^{(2)}\right] : m(a) = M +AL/(4K)\}$. All interval $h$ such that $I_h \subset [a^{(3)},a^{(2)}]$ have mean reward in $[M+4L/K, M+AL/(4K)]$. On the event $\mathcal{E}_a$, those intervals belong to $[l, \widehat{f}]$. Using Assumption \ref{hyp:lip}, we find that $L\vert a^{(2)} - a^{(3)}\vert\lor 4L/K \geq (A-16)L/(4K)$, and so $\vert a^{(2)} - a^{(3)}\vert \geq (A-16)/(4K) \geq A/(8K)$ (since $A> 32$). The number of intervals of size $1/K$ in $[a^{(3)},a^{(2)}]$ is therefore at least $A/8 - 1$. Thus $\mathcal{N}_{[l, \widehat{f}]} \geq A/8 -1 $, and
\begin{equation*}
N_{[l, \widehat{f}]} - n_{[l, \widehat{f}]}(T) \geq \left(\frac{A}{8}-1\right) \left(\frac{N}{2K} -  \frac{12K^2\log(T/\delta)}{\left( AL\right)^2}\right).
\end{equation*}
Since $A>32$, $A/8 - 1 \geq 3A/32$. Thus
\begin{equation*}\label{eq:n_opt}
N_{[l, \widehat{f}]} - n_{[l, \widehat{f}]}(T) \geq \frac{3A}{32} \left(\frac{N}{2K} -  \frac{12K^2\log(T/\delta)}{\left( AL\right)^2}\right).
\end{equation*}
To obtain an upper bound on $ n_{[\hat{f}+1, K]}(T)$, we divide the intervals $\hat{f}+1, ..., K$ into subsets. Let $\widetilde{\mathcal{S}}_0= \{ l : M - m_l \in [-4L/K, AL/K]\}$, and for $n>0$ let $\widetilde{\mathcal{S}}_n= \{ l : M - m_l \in [AL/K \times 2^{n-1}, AL/K \times 2^{n}]\}$. Since $m_{\hat{f}} \leq M + 4L/K $, we see that $\{\hat f+1, ..., K\} \subset \underset{n \geq 0}{\cup}\widetilde{\mathcal{S}}_n$.

For all $h \in \widetilde{\mathcal{S}}_0$, $\Delta_{k,h} \geq (A-4)L/K \geq 7AL/(8K)$ since $A>32$. Similarly,  for all $n>0$ and all $h \in \widetilde{\mathcal{S}}_n$, $\Delta_{k,h} \geq AL/(K(1 + 2^{n-1})) \geq AL/(2^{n-1}K)$. Using Lemma \ref{lem:bound_nl}, we find that on the event $\mathcal{E}_{a} \cap \mathcal{E}_{b} \cap\left\{n_k(T) < N_k \right\}$,

\begin{eqnarray*}
n_{[\hat{f}+1, K]}(T) &\leq& \vert\widetilde{\mathcal{S}}_0 \vert\frac{192K^2\log(T/\delta)}{49A^2L^2} +  \underset{n\geq 1}{\sum} \vert\widetilde{\mathcal{S}}_n \vert \frac{3\log(T/\delta)}{\left(AL/K\right)^22^{2n-2}}
\end{eqnarray*}

Using Lemma \ref{lem:lipschitz} and Assumption \ref{hyp:beta}, we find that $\vert\widetilde{\mathcal{S}}_0 \vert\leq 2ALQ$ and that for any $n>0$, $\vert\widetilde{\mathcal{S}}_n \vert \leq 2^{n+1}ALQ$. This implies that 
\begin{eqnarray*}
n_{[\hat{f}+1, K]}(T) &\leq& \frac{384QK^2\log(T/\delta)}{49AL} +  \underset{n\geq1}{\sum}  \frac{48\log(T/\delta) QK^2}{AL2^n}\\
&\leq& \frac{48K^2\log(T/\delta)Q}{AL} \left(\frac{8}{49} + \underset{n\geq1}{\sum} \frac{1}{2^n} \right)\\
&\leq& \frac{56K^2\log(T/\delta)Q}{AL}
\end{eqnarray*}
 
Note that we necessarily have $QL \geq 1$. Thus, for the choice $A = 35\sqrt{\frac{K^3Q\log(T/\delta)}{NL}\lor 1}$, we find that $N_{[l, \widehat{f}]} - n_{[l, \widehat{f}]}(T) > n_{[\hat{f}+1, K]}(T)$, which is impossible. We conclude that all intervals $I_h$ with a mean reward larger than $AL/K$ have been killed.

\subsection{Proof of Lemma \ref{lem:bound_Ropt}}

We have shown in Lemma \ref{lem:n_subopt} that the number of non-zero terms in $R_{opt}$ is bounded by $30Q(LN/K +\log(T/\delta)K^2/L)$. Moreover, in Lemma \ref{lem:used_up}, we have shown that those non-zero terms correspond to arms $a_i$ in intervals $I_k$ such that $m_k \leq M + AL/K$. By Assumption \ref{hyp:lip}, their payments $m(a_i)$ are such that $m(a_i) \leq M + 2AL/K$, so each non zero term is bounded by $2AL/K$. Thus, we find that
\begin{equation*}
    R_{opt}\leq 30Q\left(\frac{LN}{K}+ \frac{\log(T/\delta)K^2}{L}\right)\times 2AL/K
\end{equation*}

\subsection{Proof of Lemma \ref{lem:h2h3}}

The functions $m_0$ and $m_1$ are piecewise linear with slopes $\tilde{L}$ and $-\tilde{L}$. Since $\tilde{L} = L \land 1/2 \leq L$, Assumption \ref{hyp:lip} is satisfied.

On the other hand, for $\epsilon \in (0,\tilde{L}\delta)$,
\begin{eqnarray*}
\lambda\left( \left\{x : \vert m_0(x) - 0.5 \vert \leq \epsilon \right\}\right) &=& \lambda\left([x_0 - \epsilon/\tilde{L}, x_0 + \epsilon/\tilde{L}] \right) +\lambda\left( [1-p - \epsilon/\tilde{L}, 1-p + \epsilon/\tilde{L}] \right)\\
&&+ \lambda\left( [x_1 - \epsilon/\tilde{L}, x_1 + \epsilon/\tilde{L}] \right)\\
&=& 6\epsilon/\tilde{L} = 6\epsilon \times (1/L \lor 2) \leq Q\epsilon.
\end{eqnarray*}

For $\epsilon \geq \tilde{L}\delta$,
\begin{eqnarray*}
\lambda\left( \left\{x : \vert m_0(x) - 0.5 \vert \leq \epsilon \right\}\right) &=& \lambda\left([x_0 - \epsilon/\tilde{L}, x_1 + \epsilon/\tilde{L}]\right) =  x_1 - x_0 + 2\epsilon/\tilde{L}\\
&=& 4\delta + 2\epsilon/\tilde{L} \leq 6\epsilon \times (1/L \lor 2) \leq Q\epsilon.
\end{eqnarray*}

Thus, $m_0$ satisfies Assumption \ref{hyp:beta}. The same holds for $m_1$.

\subsection{Proof of Lemma \ref{lem:KL_01}}

Recall that $\Phi(T) = \{\phi(1), ..., \phi(T)\}$. We bound the Kullback-Leibler divergence between $\mathbb{P}_0$ and $\mathbb{P}_1$ (see, e.g., Lemma 15.1 in \cite{lattimore2018bandit}):

\begin{eqnarray*}
KL(\mathbb{P}_0, \mathbb{P}_1) &=& \underset{i = 1,...,N}{\sum}\mathbb{E}_0\left[\mathds{1}_{i \in \Phi(T)} \right] KL(\mathcal{P}_0^{y_i}, \mathcal{P}_1^{y_i})\\
&\leq & \underset{i = 1,...,N}{\sum} KL(\mathcal{P}_0^{y_i}, \mathcal{P}_1^{y_i})
\end{eqnarray*}
where  $KL(\mathcal{P}_0^{y_i}, \mathcal{P}_1^{y_i})$ denotes the Kullback-Leibler divergence of the distribution of the reward $y_i$ under $m_0$ and $m_1$. For $p, q \in (0,1)$, we denote by $kl(p,q)$ the Kullback-Leibler divergence between two Bernoulli of means $p$ and $q$. Since the variables $y_i$ are Bernoulli random variable of parameter $m(a_i)$, we find that
\begin{eqnarray*}
KL(\mathbb{P}_0, \mathbb{P}_1) &\leq & \underset{i = 1, ..., N}{\sum} kl(m_0(a_i), m_1(a_i))\\
&\leq & \underset{a_i \in [x_0, x_1]}{\sum} kl(m_0(a_i), m_1(a_i)).
\end{eqnarray*}

By definition of $m_0$ and $m_1$, for all $a_i \in [x_0, x_1]$, $\vert 0.5 -  m_0(a_i)\vert = \vert 0.5 -  m_1(a_i)\vert \leq \delta \tilde{L} < 1/4$. Easy calculations show that for $\epsilon \in [-1/2, 1/2]$,
$$kl\left(\frac{1-\epsilon}{2},\frac{1+\epsilon}{2}\right) \leq 4\epsilon^2.$$ 
Using Assumption \ref{hyp:a_lb} and the definition of $m_0$ and $m_1$, we find that

\begin{eqnarray*}
KL(\mathbb{P}_0, \mathbb{P}_1) &\leq & 4 \overset{\lceil N\delta \rceil}{\underset{i = 0}{\sum}} 4 \left(2\tilde{L}\frac{i}{N} \right)^2\\
&\leq& \frac{64\tilde{L}^2}{N^2} \times \left( N\delta+1\right)^3.
\end{eqnarray*}
Now, since $\alpha \geq 20N^{-2/3}$ and $\tilde{L}^{-2/3} \geq 0.5^{-2/3}$, $N\delta = N^{2/3}\alpha\tilde{L}^{-2/3}\geq 31$, and thus $\left( N\delta+1\right)^3 \leq \left(N\delta\right)^3(1+1/31)^3 \leq 1.1\left(N\delta\right)^3$. Thus,
\begin{eqnarray*}
KL(\mathbb{P}_0, \mathbb{P}_1) \leq \frac{70.4\tilde{L}^2}{N^2} \times \left( N\delta\right)^3 \leq 70.4 \alpha^3.
\end{eqnarray*}

\subsection{Proof of Lemma \ref{lem:R(m)}}

Under $\mathbb{P}_0$, we can see that all arms in $(x_0, 1-p)$ are sub-optimal. By construction, all optimal arms have a payment higher than $1/2$. Thus,
\begin{eqnarray*}
R_T &\geq& \underset{a_i \in (x_0, 1-p)}{\sum} \left(\frac{1}{2} - m_0(a_i)\right)\mathds{1}\{i \in \Phi(T)\}.
\end{eqnarray*}
There are at least $\lfloor N\delta \rfloor$ arms in $[x_0, x_0 + \delta)$, and at least $\lfloor N\delta \rfloor$ arms in $[x_0 + \delta, 1-p]$. We use the change of variables $k = i - \lceil x_0 N\rceil$ and $k = \lceil (1-p)N\rceil - i$ to sum over the indices of the sub-optimal arms. We find that
\begin{eqnarray*}
R_T &\geq& \overset{\lfloor N\delta \rfloor-1}{\underset{k = 0}{\sum}} \frac{k\tilde{L}}{N} \mathds{1}\left\{(\lceil x_0 N\rceil + k) \in \Phi(T)\right\}\\
&& + \overset{\lfloor N\delta \rfloor}{\underset{k = 0}{\sum}} \frac{k\tilde{L}}{N} \mathds{1}\left\{((1-p)N - k) \in \Phi(T)\right\}.
\end{eqnarray*}
On $\mathcal{Z}$, at least $\lfloor N\delta \rfloor - 2$ arms are pulled in $(x_0, 1-p)$, so easy calculations lead to 
\begin{eqnarray*}
R_T &\geq & 2 \overset{\lfloor 0.5 N\delta \rfloor-2}{\underset{k = 0}{\sum}} \frac{k\tilde{L}}{N}\\
&\geq & \frac{2\tilde{L}}{N}\frac{(\lfloor 0.5 N\delta \rfloor-1)(\lfloor 0.5 N\delta \rfloor - 2)}{2}.
\end{eqnarray*}
We have shown in Lemma \ref{lem:KL_01} that $N\delta \geq 31$, so $(\lfloor 0.5 N\delta \rfloor-1)(\lfloor 0.5 N\delta \rfloor - 2) \geq 2^{-2}(N\delta)^2(1 - 4/31)(1 - 6/31)$. Thus,
\begin{eqnarray*}
R_T &\geq & 2^{-2}(1 - 4/31)(1 - 8/31)\frac{(N\delta)^2\tilde{L}}{N} \geq 2^{-2}(1 - 4/31)(1 - 6/31)\tilde{L}^{-1/3}\alpha^2N^{1/3}.
\end{eqnarray*}

Since $\tilde{L} \leq 1/2$, this implies 

\begin{eqnarray*}
R_T &\geq & 0.22 \alpha^2N^{1/3}.
\end{eqnarray*}

On the other hand, all  arms in $(x_0, 1-p)$ are optimal for the payoff function $m_1$. Since all sub-optimal arms have a payment at most $1/2$, under $\mathcal{P}_1$,
\begin{eqnarray*}
R_T &\geq& \underset{a_i \in [x_0, 1-p]}{\sum} \left( m_1(a_i)- \frac{1}{2}\right)\mathds{1}\{i \notin \Phi(T)\}.
\end{eqnarray*}
Applying the argument developed previously, we find that 

\begin{eqnarray*}
R_T &\geq& \overset{\lfloor N\delta \rfloor -1 }{\underset{k = 0}{\sum}} \frac{k\tilde{L}}{N} \mathds{1}\left\{(k + \lceil x_0 N\rceil) \in \Phi(T)\right\}\\
&& + \overset{\lfloor N\delta\rfloor}{\underset{k = 0}{\sum}} \frac{k\tilde{L}}{N} \mathds{1}\left\{((1-p)N- k) \in \Phi(T)\right\}.
\end{eqnarray*}
On $\overline{\mathcal{Z}}$, at most $\lfloor N\delta \rfloor - 2 $ arms are pulled in $(x_0, 1-p)$. Under Assumption \ref{hyp:a_lb}, there are at least $\lfloor 2N\delta \rfloor - 2$ arms in $(x_0, 1-p)$. All of these arms are optimal for the payoff function $m_1$. Thus, on $\overline{\mathcal{Z}}$, the number of sub-optimal arms pulled is at least $\lfloor 2N\delta \rfloor - 2 - (\lfloor N\delta \rfloor -2)\geq \lfloor N\delta \rfloor$. Thus,
\begin{eqnarray*}
R_T &\geq & 2 \overset{\lfloor 0.5 N\delta \rfloor - 1}{\underset{k = 0}{\sum}} \frac{k\tilde{L}}{N} \geq \frac{\tilde{L}}{N}(0.5 N\delta- 2)(0.5 N\delta-1)\geq \frac{2^{-2}(N\delta)^2\tilde{L}}{N}(1-4/31)(1-2/31).
\end{eqnarray*}
We use $\tilde{L} \leq 1/2$ to find that $R_T \geq   0.25\alpha^2N^{1/3}$.

\subsection{Proof of Lemma \ref{lem:isoperimetrie}}
By definition of $M$,
\begin{eqnarray*}
\mathbb{P}\left(m(a_1) \in [M, M+t)\right) &=& \mathbb{P}\left(m(a_1) \geq M \right) - \mathbb{P}\left(m(a_1) \geq M+t\right)\\
&=& p - \mathbb{P}\left(m(a_1) \geq M+t\right).
\end{eqnarray*}

To provide an upper bound on $\mathbb{P}\left(m(a_1) \geq M+t\right)$, we use gaussian isoperimetric inequalities (see, e.g., Chapter 5.1 in \cite{vershynin_2018}). Those results can readily be extended to random variable uniformly distributed on the unit cube. To do so, we introduce a random normal variable  $z = (z_1, ..., z_d) \sim \mathcal{N}(0, I_d)$, and we denote by $F$ the c.d.f. of a $z_1$. Moreover, we introduce a new payment function $$\tilde{m} : (z_1, ..., z_d) \rightarrow m(F(z_1), ..., F(z_d)).$$ It is easy to see that $\tilde{m}(z)$ and $m(a_1)$ have the same distribution. Thus, by definition of $p$,
\begin{equation}\label{eq:p-Pbad}
\mathbb{P}\left(m(a_1) \in [M, M+t)\right) = p - \mathbb{P}\left(\tilde{m}(z) \geq M+t\right).
\end{equation}

Next, we show that $\tilde{m}$ verifies a weak Lipschitz Assumption. Indeed, for any $z = (z_1, ..., z_d) \in \mathbb{R}^d$, and $z' = (z_1', ..., z_d') \in \mathbb{R}^d$, by definition of $\tilde{m}$
\begin{eqnarray*}
\left\vert\tilde{m}(z) - \tilde{m}(z')\right \vert &=& \left\vert m\left(F(z_1), ..., F(z_d)\right) - m\left(F(z_1'), ..., F(z_d')\right) \right\vert\\
&\leq& \left\vert M - \tilde{m}(z)\right\vert\lor L\left\Vert \left(F(z_1), ..., F(z_d)\right) - \left(F(z_1'), ..., F(z_d')\right) \right\Vert_2
\end{eqnarray*}
where the last equation follows from Assumption \ref{hyp:lip}. Now, the gaussian c.d.f. $F$ is Lipschitz continuous, with Lipschitz constant equal to $(2\pi)^{-1/2}$. Thus,
\begin{eqnarray*}
\vert\tilde{m}(z) - \tilde{m}(z') \vert &\leq& \vert M - \tilde{m}(z)\vert \lor (L\times(2\pi)^{-1/2}\Vert z - z' \Vert_2)
\end{eqnarray*}

Thus, for all $z \in \mathbb{R}^d$ such that $\tilde{m}(z)\geq M + t$ and all $z' \in \mathbb{R}^d$ such that $\tilde{m}(z')< M$, necessarily  $\Vert z - z' \Vert_2 \geq \sqrt{2\pi}t/L$. 

Let us denote by $\mathcal{B}$ the set of Borel sets of $\mathbb{R}^d$, and by $d(z,A)$ the Euclidean distance between a point $z\in \mathbb{R}^d$ and a set $A\in \mathcal{B}$. Moreover, let us denote by $A = \{z \in \mathbb{R}^d : \tilde{m}(z) < M\}$ the sub-level set of level $M$ of the function $\tilde{m}$. By definition of $M$, we have $\mathbb{P}(A) \leq 1-p$.  Moreover, the results above show that $\{z \in \mathbb{R}^d : \tilde{m}(z) \geq M + t\} \subset \{z \in \mathbb{R}^d : d(z,A) \geq \sqrt{2\pi}t/L\}$. This implies that 
\begin{eqnarray}
\mathbb{P}\left(\tilde{m}(z) \geq M+t\right) &\leq& \mathbb{P}\left(d(z, A) \geq \sqrt{2\pi}t/L\right)\nonumber\\
&\leq& \underset{B \in \mathcal{B} : \mathbb{P}(B) \leq 1-p}{\sup} \mathbb{P}\left(d(z, B) \geq \sqrt{2\pi}t/L\right).\label{eq:sup-Pbad}
\end{eqnarray}

By Theorem 5.2.1 in \cite{vershynin_2018}, $\mathbb{P}\left(d(z, B) \geq \sqrt{2\pi} t/L\right)$ is maximized under the constraint $\mathbb{P}(B) \leq 1-p$ when $B$ is a half space of gaussian measure $1-p$. This is the case, for example, when $B = \{x \in \mathbb{R}^d : \langle x \vert e_1 \rangle \geq F^{-1}(p)\}$ and $e_1 = (1, 0, ..., 0)$ is the first vector of the canonical basis of $\mathcal{R}^d$. Then, 
\begin{equation*}
    \left\{z : d(Z, B) \geq \sqrt{2\pi}t/L\right\} = \left\{z = (z_1, ..., z_d) : z_1 \leq F^{-1}(p) - \sqrt{2\pi}t/L\right\}.
\end{equation*}
Then, Equation \eqref{eq:sup-Pbad} implies
\begin{eqnarray}
\mathbb{P}\left(\tilde{m}(z) \geq M+t\right) &\leq& P\left(z_1 \leq F^{-1}(p) - \sqrt{2\pi}t/L\right)\nonumber \\
&=& F\left(F^{-1}(p) - \sqrt{2\pi}t/L\right).\label{eq:sup-Pbad2}
\end{eqnarray}
Combining Equations \eqref{eq:p-Pbad} and \eqref{eq:sup-Pbad2}, we find that
\begin{eqnarray*}
\mathbb{P}\left(m(a_1) \in [M,M+t)\right) &\geq& p - F\left(F^{-1}(p) - \sqrt{2\pi}t/L\right)\\
&= & F(F^{-1}(p)) - F\left(F^{-1}(p) - \sqrt{2\pi}t/L\right).
\end{eqnarray*}
Using the c.d.f. of the normal distribution, we find that 
\begin{eqnarray}
\mathbb{P}\left(m(a_1) \in [M, M+t)\right)&\geq& \int_{F^{-1}(p) - \sqrt{2\pi}t/L}^{F^{-1}(p)}\frac{1}{\sqrt{2\pi}}e^{\frac{-z^2}{2}}dz\\
&\geq& \frac{t}{L}e^{\frac{-(F^{-1}(p) - \sqrt{2\pi}t/L)^2}{2}}.
\end{eqnarray}
We recall that $t/L \leq \sqrt{d}$, and conclude that
$$\mathbb{P}\left(m(a_1) \in [M, M+t)\right)\geq \frac{t}{L}e^{-(F^{-1}(p) - \sqrt{2\pi d})^2/2}.$$


\subsection{Proof of Lemma \ref{lem:d-control_hat_f}}
Recall that $\epsilon = \lceil K^{d-1}c_{p,d} \rceil$. Similarly to the one-dimensional case, we begin by proving that $\widehat{f} \geq f - \epsilon$. Since this inequality becomes trivial if $\epsilon \geq f$, we assume that  $\epsilon < f$. Recall that $f = \lfloor pK^d\rfloor$, and $\widehat{f}$ is such that $ N_{1} + .. +  N_{\widehat{f}}< T \leq N_{1} + .. +  N_{\widehat{f}+1}$. By definition, $N_{1}  + .. + N_{f-\epsilon} = \underset{1\leq i \leq N}{\sum} \mathds{1}_{\{a_i \in B_{1}\cup .. \cup B_{f-\epsilon}\}}$, where $\mathds{1}_{\{a_i \in B_{1}\cup .. \cup B_{f-\epsilon}\}}$ are independent Bernoulli random variables of parameter $\frac{f-\epsilon}{K^d}$. Using Hoeffding's inequality, we see that for all $t>0$,

\begin{eqnarray*}
\mathbb{P}\left( \underset{1\leq i \leq N}{\sum} \mathds{1}_{\{a_i \in B_{1}\cup .. \cup B_{f-\epsilon}\}} - \frac{(f-\epsilon)N}{K^d} \geq t \right) &\leq& \exp\left(-\frac{2t^2}{N}\right).
\end{eqnarray*}
Choosing $t= \epsilon N/K^d \geq c_{p,d} N/K$, we see that
\begin{eqnarray*}
\mathbb{P}\left( \underset{1\leq i \leq N}{\sum} \mathds{1}_{\{a_i \in B_{1}\cup .. \cup B_{f-\epsilon}\}} - \frac{(f-\epsilon)N}{K^d} \geq \frac{\epsilon N}{K^d} \right) &\leq& \exp\left(-\frac{2c_{p,d}^2N}{K^2}\right).
\end{eqnarray*}
Now, by definition, $f = \lfloor TK^d/N \rfloor$, and so $fN/K^d \leq T$. Thus, 
\begin{eqnarray}\label{eq:f-1}
\mathbb{P}\left( N_{1} + .. +N_{f-\epsilon}  \geq T \right) &\leq& \exp\left(-\frac{2c_{p,d}^2N}{K^2}\right).
\end{eqnarray}

This shows that with high probability, $N_{1} + .. +N_{f-\epsilon}<T$, which implies that $f-\epsilon < \hat{f}+1$. Similarly, we can show that with probability at least $1 - \exp\left(-\frac{2c_{p,d}^2N}{K^2}\right)$, $f + \epsilon + 1 \geq \widehat{f}$. Thus, with probability larger than $1- 2\exp\left(-\frac{2c_{p,d}^2N}{K^2}\right)$, $\vert f - \hat{f} \vert \leq 1+ \epsilon$.

\medskip
In a second time, we prove that $m_{f} \in [M-L\sqrt{d}/K, M+L\sqrt{d}/K]$. 

We first show that there are at least $\lceil pK^d \rceil$ bins $k$ such that $m_k \geq M - L\sqrt{d}/K$, or equivalently that there are at most $\lfloor (1-p)K^d \rfloor $ bins $k$ such that $m_k < M - L\sqrt{d}/K$. Indeed, for all $k$ such  that $m_k < M - L\sqrt{d}/K$, there exists $a \in B_k$ such that $m(a) < M-L\sqrt{d}/K$. Using Lemma \ref{lem:lipschitz}, we see that $\forall a \in B_k$, $m(a) \leq M$. By definition of $M$, there can be at most $\lfloor (1-p)K
^d\rfloor$ such bins. Therefore, there are at least $\lceil pK^d \rceil$ bins $k$ such that $m_k \geq M - L\sqrt{d}/K$. Since $f < \lfloor pK^d\rfloor$, this implies that $m_{f} \geq M - L\sqrt{d}/K$. Similar arguments show that $m_{f} \leq M + L\sqrt{d}/K$.

Now, recall that $\alpha = 4QL/c_{p,d} + 2/\sqrt{d}\times (1+3/K^{d-1})$. We show that $m_{f - \epsilon - 2} \leq M + \alpha L\sqrt{d}/K$. Note that by Assumption \ref{hyp:lip} and by definition of $M$, $\max_a \{m(a)\} \leq M + L\sqrt{d}$. Then, if $\alpha/K\geq 1$, $m_{f - \epsilon - 2} \leq M + \alpha L\sqrt{d}/K$ is automatically verified. We therefore restrict our attention to the case $\alpha/K<1$. Now, we show  that there are at least $\epsilon + 2$ bins $B_k$ such that $m_k \in [m_f,M+\alpha L\sqrt{d}/K]$. Applying Lemma \ref{lem:isoperimetrie} and Assumption \ref{hyp:beta}, we find that 
\begin{align*}
&\lambda\left(\left\{a : m(a) \in [M + 2L\sqrt{d}/K, M + \alpha L\sqrt{d}/(2K)] \right\}\right) \\
& = \lambda\left(\left\{a :  m(a) \in [M, M + \alpha L\sqrt{d}/(2K)] \right\}\right) - \lambda\left(\left\{a : m(a) \in [M, M + 2L\sqrt{d}/K] \right\}\right)\\
& \geq \alpha c_{p,d}\sqrt{d}/(2K) - 2QL\sqrt{d}/K = c_{p,d}(1 + 3/K^{d-1}).
\end{align*}
Using Lemma \ref{lem:d-lipschitz}, we see that all arms $a$ such that $ m(a) \in [M + 2L\sqrt{d}/K, M + \alpha L\sqrt{d}/(2K)] $ belongs to bins $B_k$ such that $m_k \in [M + L\sqrt{d}/K, M + \alpha L\sqrt{d}/K]$. Thus, the number of bins with mean reward in $[M + L\sqrt{d}/K, M + \alpha L\sqrt{d}/K]$ is at least $c_{p,d}\left(1+3/K^{d-1}\right) \times K^d$. By definition of $\epsilon$, this number is larger than $\epsilon + 2$. This proves that there are at least $\epsilon + 2$ bins $B_k$ such that $m_k \in [m_f,M+\alpha L\sqrt{d}/K]$, so $m_{f-\epsilon - 2} \leq M + \alpha L\sqrt{d}/K$. Therefore, $m_{\widehat{f}} \leq M + \alpha L\sqrt{d}/K$ and $m_{\widehat{f}+ 1} \leq M + \alpha L\sqrt{d}/K$ with probability larger than $1- 2\exp\left(-\frac{2c_{p,d}^2N}{K^2}\right)$.

Similarly, we can show that $m_{\widehat{f}} \geq M - \alpha L\sqrt{d}/K$ and $m_{\widehat{f} + 1} \geq M - \alpha L\sqrt{d}/K$ with probability larger than $1- 2\exp\left(-\frac{2c_{p,d}^2N}{K^2}\right)$.

\subsection{Proof of Lemma \ref{lem:d-E_a}}
Non-zero terms in $R_T^{(d)}$ correspond to pairs of arms $(i, j)$ such that $i$ is pulled by $\phi^d$ but not by $\phi^*$, and $j$ is pulled by $\phi^*$ but not by $\phi^d$. If an arm $i$ is pulled by $\phi^d$, it belongs to a bin $k$ such that $m_k \geq m_{\widehat{f}+1}$. On the event $\mathcal{E}_a$, $m_{\hat{f}+1} \geq M - \alpha \sqrt{d}L/K$. Using Lemma \ref{lem:d-lipschitz}, we find that $$m(a_i) \geq M - 2\alpha \sqrt{d}L/K.$$ On the other hand, if $i$ is not pulled by $\phi^*$, it must be such that $m(a_i) \leq \widehat{M}$. On the event $\mathcal{E}_a$, this implies that $m(a_i) \leq M + \sqrt{d}L/K$. Since there are at most $\frac{4\alpha \sqrt{d}LQ N}{K}$ arms in $[M-2\alpha\sqrt{d}L/K, M + \sqrt{d}L/K]$ on the event $\mathcal{E}_a$, there are at most $\frac{4\alpha\sqrt{d}LQ N}{K}$ arms that are selected by $\phi^d$ and not by $\phi^*$, and thus at most $\frac{4\alpha\sqrt{d}LQ N}{K}$ non-zero terms in $R_T^{(d)}$. 

Similarly to the one-dimensional case, the cost of pulling an arm $i$ selected by $\phi^d$ but not by $\phi^*$, instead of an arm $j$ selected by $\phi^*$ but not by $\phi^d$, is bounded by $2\vert\widehat{M}- m_{\widehat{f}+1}\vert \lor 2\sqrt{d}L/K \leq 2\alpha\sqrt{d}L/K$. To conclude, on the event $\mathcal{E}_a$ there are at most $\frac{4\alpha\sqrt{d}LQN}{K}$ non-zero terms in $R_T^{(d)}$, and each of them is bounded by $2\alpha\sqrt{d}L/K$. Thus, 
$$R_T^{(d)} \leq \frac{8\alpha^2dQL^2N}{K^2}.$$

\subsection{Proof of Lemma \ref{lem:d-equi_nk}}

Note that for $k \in \{1,...,K\}$, $B_k$ is a Borel set of measure $1/K^d$. Applying Bernstein's inequality, we find that for all $t>0$,
\begin{eqnarray*}
\mathbb{P}\left(\left \vert N_k - \frac{N}{K^d}\right \vert \geq t \right) &\leq& 2e^{\frac{-t^2}{2K^{-d}N + 2t/3}}.
\end{eqnarray*}
Choosing $t=K^{-d}N/2$, we find that
\begin{eqnarray*}
\mathbb{P}\left(\left \vert N_k - \frac{N}{K^d}\right \vert \geq \frac{N}{2K^d} \right) &\leq& 2e^{-\frac{N}{10K^{d}}}.
\end{eqnarray*}
A union bound for $k = 1, ..., K^d$ yields the result.

\subsection{Proof of Lemma \ref{lem:d-R_subopt}}

By Lemma \ref{lem:d-control_hat_f}, on the event $\mathcal{E}_a$, $m_{\hat{f}+1} \in [M-\alpha\sqrt{d}L/K, M+\alpha\sqrt{d}L/K]$. We group bins with mean rewards lower than $m_{\widehat{f}+1}$ into the following subsets.

Let $\mathcal{S}_0 = \left\{ k : (M - m_{k}) \in [-\alpha\sqrt{d}L/K, 2\alpha\sqrt{d}L/K] \right\}$, and for $n\geq 1$ define \\$\mathcal{S}_n = \left\{ k : (M - m_{k}) \in [2^{n}\alpha\sqrt{d}L/K, 2^{n+1}\alpha\sqrt{d}L/K] \right\}$. Note that for $n \geq \log_2(K/(\alpha\sqrt{d}L))$, $\mathcal{S}_n$ is empty since $m$ is bounded by $1$. 

Using Lemma \ref{lem:d-lipschitz}, we note that for all $l \in \mathcal{S}_0 $ and all $a \in B_l$, $\vert m(a) - M \vert \leq 4\alpha\sqrt{d}L/K.$ Using Assumption \ref{hyp:beta}, we conclude that $\left \vert \mathcal{S}_0 \right \vert \leq 4\alpha\sqrt{d}LQK^{d-1}$. On $\mathcal{E}_a$, there are at most $1.5N/K^d$ arms in each bin, so the number of arms in bins in $\mathcal{S}_0$ is at most $6\alpha\sqrt{d}LQN/K$. Moreover for all $l \in \mathcal{S}_0$ and all $a_i \in B_l$, $(M - m(a_i))\leq 4\alpha\sqrt{d}L/K$. Thus, the arms pulled from bins in $\mathcal{S}_0$ contributes to $R_{subopt}$ by at most $24\alpha^2dQL^2N/K^2$.

Similarly, for all $n \geq 1$, all $l \in \mathcal{S}_n$ and all $a \in B_l$, $\vert m(a) - M \vert \leq 2^{n+1}\alpha\sqrt{d}L/K.$ Using Assumption \ref{hyp:beta}, we conclude that $\left \vert \mathcal{S}_n \right \vert \leq 2^{n+1}\alpha\sqrt{d}LQK^{d-1}$. Moreover, by definition of $\widehat{f}$, there exists a bin $B_k$ with $m_k \geq m_{\widehat{f}+1}$ such that $n_k(T) < N_k$. Since $\Delta_{k,l}  \geq 2^{n}\alpha\sqrt{d}L/K- \alpha\sqrt{d}L/K \geq 2
^{n-1}\alpha\sqrt{d}L/K$ for all $l \in \mathcal{S}_n$, we use Lemma \ref{lem:d-bound_nl} and find that
\begin{equation*}
    n_l(T) \leq \frac{3\log(T/\delta)K^2}{2^{2n-2}\alpha^2L^2d}.
\end{equation*}
Thus,
\begin{eqnarray*}
R_{subopt} &\leq& \frac{24\alpha^2dL^2QN}{K^2} + \overset{\log_2(K/\alpha\sqrt{d}L)}{\underset{n = 1}{\sum}} 2^{n+2}\alpha\sqrt{d}QLK^{d-1} \times \frac{3\log(T/\delta)K^2}{2^{2n-2}\alpha^2L^2d}\times \frac{2^{n+1}\alpha\sqrt{d}L}{K}\\
&\leq& \frac{24\alpha^2dL^2QN}{K^2} + 96QK^d\log(T/\delta)\log_2(K/\alpha\sqrt{d}L).
\end{eqnarray*}

\subsection{Proof of Lemma \ref{lem:d-n_subopt}}

Using the notations and results established along the proof of Lemma \ref{lem:d-R_subopt}, we find that

\begin{eqnarray*}
\underset{n = 0}{\overset{\log_2(K/\alpha\sqrt{d}L)}{\sum}}\underset{B_k \in \mathcal{S}_n}{\sum} n_k(T) &\leq& 6\alpha\sqrt{d}LQN/K + \underset{n = 1}{\overset{\log_2(K/\alpha\sqrt{d}L)}{\sum}}2^{n+2}\alpha\sqrt{d}QLK^{d-1} \times \frac{3\log(T/\delta)K^2}{2^{2n-2}\alpha^2L^2d}\\
&\leq& 6\alpha\sqrt{d}LQN/K + \frac{24 \log(T/\delta)K^{d+1}Q}{\alpha\sqrt{d}L}.
\end{eqnarray*}


\subsection{Proof of Lemma \ref{lem:d-used_up}}
As in the one-dimensional case, we use the following notations : for any two bins $B_h$ and $B_l$ such that $m_h \geq m_l$, define $N_{[h,l]} = \overset{l}{\underset{k = h}{\sum}} N_k$, and $n_{[h,l]}(T) = \overset{l}{\underset{k = h}{\sum}} n_k(T)$. We prove Lemma \ref{lem:d-used_up} by contradiction. We assume that there exists a bin $B_k$ such that $m_k \geq M + A\sqrt{d}L/K$ and $n_k(T) < N_k$ and define $h$ such that $m_h \in \argmax_{l}\{m_l : m_l \leq M + A\sqrt{d}L/(2K)\}$. Then, the arguments used to prove Lemma \ref{lem:used_up} show that we necessarily have $N_{[h, \widehat{f}]} < n_{[h, \hat{f}]}(T) + n_{[\hat{f}+1, K^d]}(T)$. We obtain a contradiction by proving that on $\mathcal{E}_a \cap \mathcal{E}_b \cap \left\{n_k(T) < N_k\right\}$, $$N_{[h, \widehat{f}]} - n_{[h, \widehat{f}]}(T) > n_{[\hat{f}+1, K^d]}(T).$$

To obtain a lower bound on $N_{[h, \widehat{f}]} - n_{[h, \widehat{f}]}(T)$, we note that for all  $l \in [h, \hat{f}]$, $\Delta_{k,l} \geq \Delta_{k,h} \geq A\sqrt{d}L/(2K)$. Using Lemma \ref{lem:bound_nl}, we see that of the event $\mathcal{E}_{a}\cap \mathcal{E}_b \cap \left\{n_k(T) < N_k \right\}$
\begin{eqnarray*}\label{eq:upper_n(T)}
n_{l}(T) \leq \frac{3\log(T/\delta)}{\Delta_{k,l}^2} \leq \frac{3\log(T/\delta)}{\left( A\sqrt{d}L/(2K)\right)^2} \leq \frac{12K^2\log(T/\delta)}{\left( A\sqrt{d}L\right)^2}.
\end{eqnarray*}

On the event $\mathcal{E}_b$, each bin contains at least $N/2K^d$ arms. Thus, 

\begin{eqnarray*}\label{eq:upper_n(T)}
N_{h}-n_{h}(T) \geq \frac{N}{2K^d} -  \frac{12K^2\log(T/\delta)}{\left( A\sqrt{d}L\right)^2}.
\end{eqnarray*}

The following reasoning helps us obtain a lower bound on the number of bins $B_l$ for $l \in [h, \widehat{f}]$, denoted by $\mathcal{N}_{[h, \widehat{f}]}$. First, recall that on $\mathcal{E}_a$, $m_{\widehat{f}} \leq M + \alpha \sqrt{d}L/K$. Now, any arm $a$ such that $m(a) \in [M+2\alpha\sqrt{d}L/K, M + A\sqrt{d}L/4K]$ belongs to a bin $B_l$ such that $m_l \in [M + \alpha\sqrt{d}L/K,  M + A\sqrt{d}L/2K]$. By definition of $h$, this bin $B_l$ is such that $l \in [h, \widehat{f}]$.

Next, we use Lemma \ref{lem:isoperimetrie} to lower bound $\lambda\left(\{a : m(a) \in [M+2\alpha\sqrt{d}L/K, M + A\sqrt{d}L/4K]\}\right)$. We have assumed that there exists a bin with mean reward larger than $M + A\sqrt{d}L/K$, so we necessarily have $A/K \leq 1$. Using Assumption \ref{hyp:beta} and Lemma \ref{lem:isoperimetrie}, we find that for the constant $c_{p,d}$ appearing in Lemma \ref{lem:isoperimetrie}
\begin{eqnarray*}
&& \lambda \left( \{ a: m(a) \in [M+2\alpha \sqrt{d}L/K, M + A \sqrt{d}L/4K] \} \right) \\
&& = \lambda \left(\{a : m(a) \in [M, M + A\sqrt{d}L/4K]\} \right) - \lambda \left( \{a : m(a) \in [M,M+2\alpha\sqrt{d}L/K]\} \right) \\
&& \geq  c_{p,d} \frac{A\sqrt{d}}{4K} - 2\alpha\sqrt{d}QL/K.
\end{eqnarray*}

By definition of $A$, we have $A \geq 16\alpha QL/c_{p,d}$, and thus $c_{p,d} A\sqrt{d}/(4K) - 2\alpha\sqrt{d}QL/K \geq c_{p,d} A\sqrt{d}/(8K)$. Now, all arms in $\{a: m(a) \in [M+2\alpha\sqrt{d}L/K, M + A\sqrt{d}L/4K]\}$ belongs to bins in $[h, \widehat{f}]$. Since each of those bins have volume $K^{-d}$, we find that $\mathcal{N}_{[h, \widehat{f}]} \geq c_{p,d} \frac{A\sqrt{d}}{8}K^{d-1}$, and

\begin{equation*}
N_{[h, \widehat{f}]} - n_{[h, \widehat{f}]}(T) \geq \frac{Ac_{p,d}\sqrt{d}K^{d-1}}{8}\left(\frac{N}{2K^d} -  \frac{12K^2\log(T/\delta)}{\left( A\sqrt{d}L\right)^2}\right).
\end{equation*}

To obtain an upper bound on $ n_{[\hat{f}+1, K]}(T)$, we divide the bins $\hat{f}+1, ..., K$ into subsets. Let $\widetilde{\mathcal{S}}_0= \{ l : M - m_l \in [-\alpha\sqrt{d}L/K, A\sqrt{d}L/K\}$, and for $n>0$ let $\widetilde{\mathcal{S}}_n= \{ l : M - m_l \in [A\sqrt{d}L/K \times 2^{n-1}, A\sqrt{d}L/K \times 2^{n}\}$. Since $m_{\hat{f}} \leq M + \alpha\sqrt{d}L/K $, we see that $\{\hat f+1, ..., K\} \subset \underset{n \geq 0}{\cup}\widetilde{\mathcal{S}}_n$.

For all $l \in \widetilde{\mathcal{S}}_0$, $\Delta_{k,l} \geq (A-\alpha)\sqrt{d}L/K \geq 15A\sqrt{d}L/(16K)$ since $A>16\alpha$. Using Lemma \ref{lem:d-lipschitz} and Assumption \ref{hyp:beta}, we find that $\vert\widetilde{\mathcal{S}}_0 \vert\leq 2A\sqrt{d}LQK^{d-1}$. Similarly,  for all $n>0$ and all $l \in \widetilde{\mathcal{S}}_n$, $\Delta_{k,l} \geq A\sqrt{d}L(1 + 2^{n-1})/K \geq A\sqrt{d}L2^{n-1}/K$, and $\vert\widetilde{\mathcal{S}}_n \vert \leq A\sqrt{d}QL2^{n+1}K^{d-1}$. Using Lemma \ref{lem:d-bound_nl}, we find that on the event $\mathcal{E}_{a} \cap \mathcal{E}_{b} \cap\left\{n_k(T) < N_k \right\}$,

\begin{eqnarray*}
n_{[\hat{f}+1, K]}(T) &\leq& \frac{768K^{d+1}Q\log(T/\delta)}{225\sqrt{d}AL} +  \underset{n\geq 1}{\sum} AQ\sqrt{d}L2^{n+1}K^{d-1} \frac{3K^2\log(T/\delta)}{A^2dL2^{2n-2}}\\
&\leq& \frac{28K^{d+1}Q\log(T/\delta)}{AL\sqrt{d}}
\end{eqnarray*}

Recall that we necessarily have $QL \geq 1$, and that $c_{p,d} \leq 1$. Thus, for the choice 
$$A =  \sqrt{\frac{472QK^{d+2}\log(T/\delta)}{Nc_{p,d}Ld}} \lor 16\alpha QL/c_{p,d},$$
we find that $N_{[h, \widehat{f}]} - n_{[h, \widehat{f}]}(T) > n_{[\hat{f}+1, K]}(T)$, which is impossible. We conclude that all bins $B_l$ with a mean reward larger than $M+A\sqrt{d}L/K$ have been emptied.

\end{document}